\providecommand{\U}[1]{\protect\rule{.1in}{.1in}}
\newtheorem{theorem}{Theorem}
\newtheorem{corollary}[theorem]{Corollary}
\newtheorem{definition}[theorem]{Definition}
\newtheorem{lemma}[theorem]{Lemma}
\newtheorem{proposition}[theorem]{Proposition}
\newtheorem{remark}[theorem]{Remark}
\newenvironment{proof}[1][Proof]{\noindent\textbf{#1.} }{\ \rule{0.5em}{0.5em}}
\begin{document}

\title{Kusuoka-Stroock gradient bounds for the solution of the filtering equation}
\author{Dan Crisan\thanks{Department of Mathematics, Imperial College London, 180
Queen's Gate, London, SW7 2AZ, United Kingdom}
\and Christian Litterer\thanks{Department of Mathematics, Imperial College London,
180 Queen's Gate, London, SW7 2AZ, United Kingdom}
\and Terry Lyons\thanks{ University of Oxford and Oxford-Man Institute Eagle House,
Walton Well Road, Oxford. OX2 6ED}}
\date{25 September 2013}
\maketitle

\begin{abstract}
We obtain sharp gradient bounds for perturbed diffusion semigroups. In
contrast with existing results, the perturbation is here random and the bounds
obtained are pathwise. Our approach builds on the classical work of Kusuoka
and Stroock \cite{kusuoka, k-s-0,k-s-1,k-s}, and extends their program
developed for the heat semi-group to solutions of stochastic partial
differential equations. The work is motivated by and applied to nonlinear
filtering. The analysis allows us to derive pathwise gradient bounds for the
un-normalised conditional distribution of a partially observed signal. It uses
a pathwise representation of the perturbed semigroup in the spirit of
classical work by Ocone \cite{ocone}. The estimates we derive have sharp small
time asymptotics.\medskip

MSC 2010: 60H30 (60G35; 60H35; 93E11).\medskip

Keywords: Stochastic partial differential equation; Filtering; Zakai equation;
Randomly perturbed semigroup, gradient bounds, small time asymptotics.

\end{abstract}

\section{Introduction}

In the eighties, Kusuoka and Stroock \cite{kusuoka, k-s-0,k-s-1,k-s} analysed
the smoothness properties of the (perturbed) semigroup associated to a
diffusion process. More precisely, let $\left(  \Omega,\mathcal{F},P\right)  $
be a probability space on which we have defined a $d_{1}$-dimensional standard
Brownian motion $B$ and $X^{x}=\{X_{t}^{x},\ t\geq0\}, $ $x\in{\mathbb{R}}%
^{N}$ be the stochastic flow%

\begin{equation}
X_{t}^{x}=x+\int_{0}^{t}V_{0}(X_{s}^{x})ds+\sum_{i=1}^{d_{1}}\int_{0}^{t}%
V_{i}(X_{s}^{x})\circ dB_{s}^{i},\quad t\geq0, \label{eq:dan:1:2:20}%
\end{equation}
where the vector fields $\left\{  V_{i},~\ i=0,...,d_{1}\right\}  $ are
$C_{b}^{\infty},$ by which me mean they are smooth and bounded with bounded
derivatives of all orders$,$ and the stochastic integrals in
(\ref{eq:dan:1:2:20}) are of Stratonovich type. The corresponding perturbed
diffusion semigroup is then given by%

\[
(P_{t}^{c}\varphi)(x)={\mathbb{E}}\left[  \varphi(X_{t}^{x})\exp\left(
\int_{0}^{t}c\left(  X_{s}^{x}\right)  ds\right)  \right]  ,\quad t\geq0,\quad
x\in\mathbb{R}^{N},
\]
where $c\in C_{b}^{\infty}\left(  {\mathbb{R}}^{N}\right)  $ and
$\varphi:\mathbb{R}^{N}\rightarrow\mathbb{R}$ is an arbitrary bounded
measurable function. The vector fields $\left\{  V_{i},~\ i=0,...,d_{1}%
\right\}  $ are assumed to satisfy Kusuoka's so-called UFG condition. This
condition states that the $C_{b}^{\infty}\left(  \mathbb{R}^{N}\right)
-$module $\mathcal{W}$ generated by the vector fields $\left\{  V_{i}%
,~\ i=1,...,d_{1}\right\}  $ within the Lie algebra generated by $\left\{
V_{i}~~\ i=0,...,d_{1}\right\}  $ is finite dimensional. In particular, the
condition does not require that the vector space $\{W(x)|W\in\mathcal{W}\}$ is
isomorphic to ${\mathbb{R}}^{N}$ for all $x\in{\mathbb{R}}^{N}$. Hence, in
this sense, the UFG condition is weaker than the uniform H\"{o}rmander condition.

\bigskip

Kusuoka, Stroock prove that, under the UFG condition, $P_{t}^{c}\varphi$ is
differentiable in the direction of any vector field $W$ belonging to
$\mathcal{W}$. Moreover, they deduce sharp gradient bounds of the following
form: Given vector fields $W_{i}\in{\mathcal{W}}$, $i=1,...,m+n$ there exist
constants $C>0$, $l>0$ such that
\begin{equation}
\Vert W_{1}\ldots W_{m}P_{t}^{c}(W_{m+1}\ldots W_{m+n}\varphi)\Vert_{p}\leq
Ct^{-l}\Vert\varphi\Vert_{p},\ \ \label{typeofbounds}%
\end{equation}
holds for any $\varphi\in C_{0}^{\infty}\left(  {\mathbb{R}}^{N}\right)  $,
$t\in(0,1]$ and $p\in\left[  1,\infty\right]  $. In fact, the constant $l$
depends explicitly on the vector fields $W_{i}$, $i=1,...,m+n$ and the small
time asymptotics are sharp. In this paper we deduce a similar result for the
randomly perturbed semigroup. More precisely, let
\[
Y=\left\{  \left(  Y_{t}^{i}\right)  _{i=1}^{d_{2}},t\geq0\right\}
\]
be a $d_{2}$-dimensional standard Brownian motion independent of $X$, and define%

\begin{equation}
\rho_{t}^{Y\left(  \omega\right)  }(\varphi)(x)={\mathbb{E}}\left[  \left.
\varphi(X_{t}^{x})Z_{t}^{x}\right\vert \mathcal{Y}_{t}\right]  \left(
\omega\right)  ,\text{~}t\geq0,~x\in\mathbb{R}^{N}, \label{processrho}%
\end{equation}
where $Z^{x}=\left\{  Z_{t}^{x},t\geq0\right\}  $, $x\in\mathbb{R}^{N}$ is the
stochastic process
\begin{equation}
Z_{t}^{x}=\exp\left(  \sum_{i=1}^{d_{2}}\int_{0}^{t}h_{i}\left(  X_{s}%
^{x}\right)  dY_{s}^{i}-\frac{1}{2}\sum_{i=1}^{d_{2}}\int_{0}^{t}h_{i}\left(
X_{s}^{x}\right)  ^{2}ds\right)  ,\text{~}t\geq0,~x\in\mathbb{R}^{N},
\label{zmartingale}%
\end{equation}
$h_{i}\in C_{b}^{\infty}\left(  \mathbb{R}^{N}\right)  $, $i=1,...,d_{2}$ and
$\varphi$ is an arbitrary bounded measurable function on $\mathbb{R}^{N}$.
Then we prove in the following that for the mapping $x\longrightarrow\rho
_{t}^{Y\left(  \omega\right)  }(\varphi)(x),$ there exists a $P$-almost surely
finite random variable $\omega\rightarrow C\left(  \omega\right)  $ such that
with $l$ the explicit constant in $\left(  \ref{typeofbounds}\right)  $ we
have
\begin{equation}
\Vert W_{1}\ldots W_{m}\rho_{t}^{Y\left(  \omega\right)  }\left(
W_{m+1}\ldots W_{m+n}\varphi\right)  \Vert_{p}\leq C\left(  \omega\right)
t^{-l}\Vert\varphi\Vert_{p}, \label{typeofbounds2}%
\end{equation}

for any $\varphi\in C_{0}^{\infty}\left(  \mathbb{R}^{N}\right)  ,$
$t\in(0,1],p\in\left[  1,\infty\right]  $.

\bigskip

We are interested in this particular perturbation as it provides the
Feynman-Kac representation for solutions of linear parabolic stochastic
partial differential equations (SPDEs)\footnote{We expect the methodology
presented here can be \ extended to handle a wider class of random
perturbations. We chose this particular perturbation because the corresponding
randomly perturbed semigroup provides the Feynman-Kac representation for the
solution of the filtering problem. See the Kallianpur-Striebel formula
(\ref{kallstrie}) below.}. More precisely, let $\rho^{x}=\{\rho_{t}%
^{x},\ t\geq0\},$ $x\in{\mathbb{R}}^{N}$ \ be the measure valued process
defined on the probability space $\left(  \Omega,\mathcal{F},P\right)  $ by
the formula
\[
\left(  \rho_{t}^{x}\left(  \omega\right)  \right)  (\varphi)=\rho
_{t}^{Y\left(  \omega\right)  }(\varphi)(x),
\]
where $\varphi$ is an arbitrary Borel measurable function. Then $\rho^{x}$ is
the solution of the following linear parabolic SPDE \ (written here in its
weak form):
\begin{align}
d\rho_{t}^{x}(\varphi)  &  =\rho_{t}^{x}(A\varphi)dt+\sum_{k=1}^{d_{2}}%
\rho_{t}^{x}(h_{k}\varphi)dY_{t}^{k}\label{weakks}\\
\rho_{0}^{x}  &  =\delta_{x}.\nonumber
\end{align}
Here, $\delta_{x}$ is Dirac delta distribution centered at $x\in\mathbb{R}%
^{N}$, $A=V_{0}+\frac{1}{2}\sum_{i=1}^{d_{1}}V_{i}^{2}\;$is the infinitesimal
generator of $X$, and $\varphi$ is a suitably chosen test function. Equation
(\ref{weakks}) is called the Duncan-Mortensen-Zakai equation (cf.
\cite{d,m,z}). It plays a central r\^{o}le in nonlinear filtering: The
normalised solution of (\ref{weakks})\ gives the conditional distribution of a
partially observed stochastic process. We give details of this intrinsic
connection in the second section.

Let us finally note that for a fixed $x\in{\mathbb{R}}^{N}$, and any suitably
chosen test function $\varphi$, the application $Y\left(  \omega\right)
\longrightarrow\rho_{t}^{Y\left(  \omega\right)  }(\varphi)(x)$ is a (locally)
Lipschitz continuous function as defined on the space of continuous
paths\footnote{Here we consider the space of continuous paths defined on
$[0,\infty)$ with values $\mathbb{R}^{d_{2}}$ endowed with the topology of
convergence in the supremum norm on compacts. The choice of the norm is
important. See \cite{cdfo} for further details.}, see \cite{cc} for details.
In this paper, we study the mapping $x\longrightarrow\rho_{t}^{Y\left(
\omega\right)  }(\varphi)(x)$ for a fixed (Brownian) path $Y\left(
\omega\right)  $ and a suitably chosen test function $\varphi$.$\newline$
\newline The paper is structured as follows: In Section 2 we introduce the
filtering problem and explain the connection with the randomly perturbed
semigroup (RPS). In section \ref{section main theorem statement} we state the
main results of the paper, that is, we deduce sharp gradient bounds of the
type (\ref{typeofbounds2}) for the RPS. In addition, we also give direct
corollaries on the smoothness properties of the solution of the filtering problem.

In Section \ref{main theorem section}, we derive an expansion of the RPS in
terms of a classical perturbation series. The expansion is in terms of a
series of (iterated) integrals with respect to the Brownian motion $Y$ and
derived by exploiting the intrinsic connection between the RPS and the mild
solution to the Zakai equation. We then proceed to prove the main theorem. The
proof of the main theorem is contingent on two non-trivial regularity
estimates for the terms appearing in the perturbation expansion of $\rho
_{t}^{Y\left(  \omega\right)  }$ (Propositions \ref{first a priori estimate}
and \ref{main factorial bound}), which we prove in the remainder of the paper.
$\newline$

In a first step towards proving these two propositions we re-write in Section
\ref{density expansion} the terms of the perturbation expansion iteratively
using integration by parts to derive a pathwise representation of the RPS. In
particular, this allows us to give an alternative proof of the robust
formulation of the filtering problem. In Section
\ref{section integral kernels} we then prove a priori regularity estimates for
the terms in the perturbation series. For this, we derive H\"{o}lder type
regularity estimates for each term in the pathwise representation of the
perturbation expansion by carefully leveraging the gradient estimates for heat
semi-groups due to Kusuoka and Stroock. The a priori estimates are
asymptotically sharp estimates for the lower order terms in the expansion, but
unfortunately not summable.

\bigskip

Finally, in Section \ref{factorial decay section} we rely on both the a prior
estimates derived in Section \ref{section integral kernels} and arguments
underlying the Extension Theorem - a fundamental result from rough path theory
(see, e.g. \cite{L,LCL}) - to deduce factorially decaying H\"{o}lder type
bounds for the terms in the perturbation expansion. To this end, we observe
that the terms of the original series (as derived in Section
\ref{main theorem section}), when regarded as bounded linear operators between
suitable spaces that encode the derivatives, are multiplicative functionals.
Such multiplicative functionals are more general than ordinary rough paths but
arise similarly for example also in the context of the work of Deya,
Gubinelli, Tindel et al (see e.g. \cite{DGT}) where they analyse rough heat
equations. The paper is completed with an appendix containing several useful
lemmas and an explicit description of the first three terms in the pathwise
representation or the perturbation expansion for one dimensional observations.

\emph{Acknowledgements.} The work of D. Crisan and C. Litterer was partially
supported by the EPSRC Grant No: EP/H0005500/1. The work of T. Lyons was
partially supported by the EPSRC Grant No: EP/H000100/1.

\section{The non-linear filtering problem}

Let $C_{b}^{\infty}\left(  \mathbb{R}^{N}\right)  $denote the space of smooth
bounded functions on $\mathbb{R}^{N}$ with bounded derivatives of all orders
and $C_{0}^{\infty}\left(  \mathbb{R}^{N}\right)  $ the space of compactly
supported smooth functions on $\mathbb{R}^{N}.$The nonlinear filtering problem
is stated on the probability space $(\Omega,\mathcal{F},\mathbb{{\tilde{P}})}%
$, where the new probability measure $\mathbb{{\tilde{P}}}$ is related to the
probability measure $\mathbb{P}$ under which the triple\footnote{Throughout
this section, we will omit the dependence on the initial condition
$x\in\mathbb{R}^{N}$ for the processes $X^{x}$. The same appllies to all other
processes ($Z$, $W,\mathbf{\rho}$ etc).} $(X,Y,B)$ has been introduced in the
previous section. More precisely, the probability measure $\tilde{\mathbb{P}}$
is absolutely continuous with respect to $\mathbb{P}$ and its Radon-Nikodym
derivative is given by
\[
\left.  \frac{d\tilde{\mathbb{P}}}{d\mathbb{P}}\right\vert _{\mathcal{F}_{t}%
}=Z_{t},~\ t\geq0,
\]
where $Z=\left\{  Z_{t},t\geq0\right\}  $ is the exponential martingale
defined in (\ref{zmartingale}), that is,
\[
Z_{t}=\exp\left(  \sum_{i=1}^{d_{2}}\int_{0}^{t}h_{i}\left(  X_{s}\right)
dY_{s}^{i}-\frac{1}{2}\sum_{i=1}^{d_{2}}\int_{0}^{t}h_{i}\left(  X_{s}\right)
^{2}ds\right)  ,\text{~}t\geq0.
\]
Under $\mathbb{{\tilde{P}}}$ the law of the process $X$ remains unchanged.
That is, $X$ satisfies the stochastic differential equation%
\begin{equation}
dX_{t}=V_{0}(X_{t})dt+\sum_{i=1}^{d_{1}}V_{i}(X_{t})\circ dB_{t}^{i}%
,X_{0}=x\in\mathbb{R}^{N}\quad t\geq0. \label{ss1}%
\end{equation}
As in the previous section, we assume that the vector fields $\left\{
V_{i},~\ i=0,...,d_{1}\right\}  $ are smooth and bounded with bounded
derivatives, i.e. $V_{i}\in C_{b}^{\infty}\left(  \mathbb{R}^{N}%
,\mathbb{R}^{N}\right)  ,$ and the stochastic integrals in (\ref{ss1}) are of
Stratonovich type. We denote by $\pi_{0}$ the initial distribution of $X$,
$\pi_{0}=\delta_{x}$.

Under $\mathbb{{\tilde{P}}}$ the process $Y$ is no longer a Brownian
motion,\ but becomes a semi-martingale. More precisely, $Y$ satisfies the
following evolution equation
\begin{equation}
Y_{t}=\int_{0}^{t}h(X_{s})ds+W_{t}, \label{eq:filterEq:observation}%
\end{equation}
where $W$ is a standard $\mathcal{F}_{t}$-adapted $d_{2}$-dimensional Brownian
motion (under $\mathbb{{\tilde{P}}}$) independent of $X$.\ Let $\{\mathcal{Y}%
_{t},\ t\geq0\}$ be the usual filtration associated with the process $Y$, that
is $\mathcal{Y}_{t}=\sigma(Y_{s},\ s\in\lbrack0,t])$.

Within the filtering framework, the process $X$ is called the \emph{signal}
process and the process \thinspace$Y$\ is called the \emph{observation}
process.\ The filtering problem consists in determining $\pi_{t}$,\ the
conditional distribution of the signal $X$ at time $t$ given the information
accumulated from observing $Y$ in the interval $[0,t]$, that is, for $\varphi
$\ Borel bounded function, computing
\begin{equation}
\pi_{t}\left(  \varphi\right)  =\mathbb{E}[\varphi(X_{t})\mid\mathcal{Y}_{t}].
\label{nfp}%
\end{equation}
The connection between $\pi_{t},$ the conditional distribution of the signal
$X_{t}$, and the randomly perturbed semigroup is given by the
Kallianpur-Striebel formula. We have
\begin{equation}
\pi_{t}(\varphi)=\frac{\rho_{t}^{Y\left(  \omega\right)  }(\varphi)}{\rho
_{t}^{Y\left(  \omega\right)  }(\mathbf{1})}\quad\tilde{\mathbb{P}}%
(\mathbb{P})-\mathrm{a.s.,} \label{kallstrie}%
\end{equation}
where $\mathbf{1}$\ is the constant function $\mathbf{1}\left(  x\right)  =1$
for any $x\in\mathbb{R}^{N}$.\ Equivalently, the Kallianpur-Striebel formula
can be stated as
\[
\pi_{t}=\frac{1}{c_{t}}\rho_{t}\quad\tilde{\mathbb{P}}(\mathbb{P}%
)-\mathrm{a.s.,}%
\]
where $\rho_{t}$ is the measure valued process which solves the
Duncan-Mortensen-Zakai equation (\ref{weakks})\ and $c_{t}=\rho_{t}%
(\mathbf{1})$. The Kallianpur-Striebel formula explains the usage of the term
{unnormalised} for $\rho_{t}$ as the denominator $\rho_{t}(\mathbf{1})$ can be
viewed as the normalizing factor\ for\ $\rho_{t}$. For further details of the
filtering framework see, for example, \cite{bc} and the references therein.

\section{The main theorem\label{section main theorem statement}}

In this section we will state the main results of our paper. Define the set of
all multi-indices $\mathbb{A}$ by letting
\[
\mathbb{A}=\bigcup_{k=0}^{\infty}\{0,\ldots,d_{1}\}^{k}.
\]
Following Kusuoka \cite{kusuoka} we define for multi-indices $\alpha
=(\alpha_{1},\ldots,\alpha_{k}),\beta=(\beta_{1},\ldots,\beta_{l}%
)\in\mathbb{A}$ a multiplication by setting
\[
\alpha\ast\beta=(\alpha_{1},\ldots,\alpha_{k},\beta_{1},\ldots,\beta_{l}).
\]
Furthermore we define a degree on a multi-index $\alpha$ by $\Vert\alpha
\Vert=k+card(j:\alpha_{j}=0)$. Let $A_{0}=$ $\mathbb{A}\setminus\{0\},$
$A_{1}=\mathbb{A}\setminus\{\emptyset,(0)\}$ and $A_{1}(j)=\{\alpha
\in\mathbb{A}_{1}:\Vert\alpha\Vert\leq j\}$. We inductively define a family of
vector fields indexed by $A$ by taking
\[
V_{[\emptyset]}=Id,\quad V_{[i]}=V_{i},\quad0\leq i\leq d_{1}%
\]%
\[
V_{[\alpha\ast i]}=[V_{[\alpha]},V_{i}],\quad0\leq i\leq d_{1},\alpha\in A.
\]

The following condition was introduced by Kusuoka and is weaker than the usual
(uniform) H\"{o}rmander condition imposed on the vector fields defining the
signal diffusion (see Kusuoka \cite{kusuoka} ).

\begin{definition}
The family of vector fields $V_{i}$, $i=0,\ldots,d_{1}$ is said to satisfy the
condition (UFG) if the Lie algebra generated by it is finitely generated as a
$C_{b}^{\infty}$ left module, i.e. there exists a positive $k$ such that for
all $\alpha\in A_{1}$ there exist $u_{\alpha,\beta}\in C_{b}^{\infty}\left(
\mathbb{R}^{N}\right)  $ satisfying
\begin{equation}
V_{[\alpha]}=\sum_{\beta\in A_{1}(k)}u_{\alpha,\beta}V_{[\beta]}.
\label{UFGEq}%
\end{equation}

\end{definition}

From now on suppose that our system of vector fields $V_{i}$, $i=0,\ldots
,d_{1}$ satisfies the UFG condition and let $\ell$ denote the minimal integer
$k$ for which the condition $\left(  \ref{UFGEq}\right)  $ holds. We are ready
to formulate the main theorem.

\begin{theorem}
\label{main-theorem}Suppose the family of vector fields $V_{i},$
$i=0,\ldots,d_{1}$ satisfies the UFG condition. Let $m\geq j\geq0,$
$\alpha_{1},\ldots,\alpha_{j},\ldots,\alpha_{m}\in A_{1}\left(  \ell\right)
,$ $h\in C_{b}^{\infty}\left(  \mathbb{R}^{d_{2}}\right)  .$ Then there exists
a random variable $C\left(  \omega\right)  $ almost surely finite such that
the randomly perturbed semigroup $\rho_{t}^{Y\left(  \omega\right)  }$
satisfies%
\begin{align*}
&  \left\Vert \left(  V_{\left[  \alpha_{1}\right]  }\cdots V_{\left[
\alpha_{j}\right]  }\rho_{t}^{Y\left(  \omega\right)  }\left(  V_{\left[
\alpha_{j+1}\right]  }\cdots V_{\left[  \alpha_{m}\right]  }\varphi\right)
\right)  \left(  x\right)  \right\Vert _{\infty}\\
&  \leq C\left(  \omega\right)  t^{-\left(  \left\Vert \alpha_{1}\right\Vert
+\cdots+\left\Vert \alpha_{m}\right\Vert \right)  /2}\left\Vert \varphi
\right\Vert _{\infty}%
\end{align*}
for any $\varphi\in C_{b}^{\infty}\left(  \mathbb{R}^{N}\right)  ,$
$t\in(0,1]$. If in addition $h\in C_{0}^{\infty}\left(  \mathbb{R}^{d_{2}%
}\right)  $ there exists $C\left(  \omega\right)  $ a.s. finite such that
\begin{align*}
&  \left\Vert \left(  V_{\left[  \alpha_{1}\right]  }\cdots V_{\left[
\alpha_{j}\right]  }\rho_{t}^{Y\left(  \omega\right)  }\left(  V_{\left[
\alpha_{j+1}\right]  }\cdots V_{\left[  \alpha_{m}\right]  }\varphi\right)
\right)  \left(  x\right)  \right\Vert _{p}\\
&  \leq C\left(  \omega\right)  t^{-\left(  \left\Vert \alpha_{1}\right\Vert
+\cdots+\left\Vert \alpha_{m}\right\Vert \right)  /2}\left\Vert \varphi
\right\Vert _{p}%
\end{align*}
for all $\varphi\in C_{0}^{\infty}\left(  \mathbb{R}^{N}\right)  ,$
$t\in(0,1],$ $p\in\lbrack1,\infty]$.
\end{theorem}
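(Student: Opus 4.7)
The plan is to represent $\rho_t^{Y(\omega)}$ as an iterated-integral perturbation series whose terms involve only the unperturbed diffusion semigroup $P_t$ associated with $A$, then to estimate each term using the classical Kusuoka-Stroock gradient bound \eqref{typeofbounds}, and finally to sum. Formally, rewriting the Zakai equation \eqref{weakks} in mild form and iterating yields
\[
\rho_t^{Y(\omega)}(\varphi)(x) \;=\; \sum_{n\geq 0}\, \sum_{k_1,\ldots,k_n=1}^{d_2} \int_{0<s_1<\cdots<s_n<t} \bigl( P_{s_1}\, M_{h_{k_1}}\, P_{s_2-s_1}\cdots M_{h_{k_n}}\, P_{t-s_n}\varphi \bigr)(x)\, dY^{k_1}_{s_1}\cdots dY^{k_n}_{s_n},
\]
where $M_h$ denotes multiplication by $h$ (with appropriate Stratonovich/It\^o corrections folded in). This is what Section \ref{main theorem section} supplies. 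Applying the outer derivatives $V_{[\alpha_1]}\cdots V_{[\alpha_j]}$ and the inner derivatives $V_{[\alpha_{j+1}]}\cdots V_{[\alpha_m]}$ to $\varphi$ then reduces the theorem to controlling, uniformly in $n$, the size of these differentiated iterated integrals.

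For each fixed $n$ I would distribute the vector fields across the $P_{s_{j+1}-s_j}$-factors and rewrite the iterated integral pathwise using integration by parts, so that all derivatives act on well-controlled semigroup factors and never directly on the rough $Y$-integrators; this is exactly the rewriting carried out in Section \ref{density expansion}, which simultaneously furnishes an alternative proof of the robust formulation of the filter. The Kusuoka-Stroock bound \eqref{typeofbounds} applied to each semigroup factor then produces a kernel singular like $\prod_{j} (s_{j+1}-s_j)^{-\nu_j/2}$, with $\sum_j \nu_j$ equal to $\|\alpha_1\|+\cdots+\|\alpha_m\|$ plus contributions from the additional $h_{k_i}$-differentiations picked up along the way. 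Pairing this with the natural H\"older/rough-path control of the $Y$-iterated integrals on the simplex yields the a priori estimate announced as Proposition \ref{first a priori estimate}, whose $t$-exponent is asymptotically sharp for the leading-order terms but whose dependence on $n$ is not summable.

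The main obstacle is precisely this failure of summability: the simplex volume $t^n/n!$ is insufficient to absorb the negative powers of $(s_{j+1}-s_j)$ created by the derivatives, so a naive $\ell^1$ bound on the perturbation series diverges. The resolution, executed in Section \ref{factorial decay section}, is to view the partial sums of the series, indexed over time sub-intervals $[s,t]\subset[0,1]$, as multiplicative functionals taking values in the bounded operators between weighted Sobolev-type spaces that encode the admissible derivatives on either side of $\rho_t^{Y(\omega)}$. The Chen-type multiplicativity in time, combined with the a priori estimates from Proposition \ref{first a priori estimate}, places these functionals in exactly the regularity class to which the Extension Theorem of rough path theory (see \cite{L, LCL}) applies, producing factorially decaying H\"older bounds for the $n$-th term (Proposition \ref{main factorial bound}) and hence the required summability, uniformly in $t\in(0,1]$.

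Assembling these three ingredients delivers the stated bound. The exponent $-(\|\alpha_1\|+\cdots+\|\alpha_m\|)/2$ comes directly from the leading $n=0$ term, for which the estimate reduces to \eqref{typeofbounds} applied to $P_t\varphi$, while the higher-order terms are controlled by additional positive powers of $t$ from the simplex integration together with the factorial decay. The almost-surely-finite random constant $C(\omega)$ arises from the rough-path seminorms of $Y(\omega)$ that enter through the Extension Theorem step. Finally, the upgrade from the $L^\infty$ bound to the full $L^p$ estimate, $p\in[1,\infty]$, is obtained by a standard duality argument moving derivatives across $\rho_t^{Y(\omega)}$; the strengthened compact-support hypothesis on $\varphi$ (and the corresponding hypothesis in the theorem statement) is what makes this duality rigorous.
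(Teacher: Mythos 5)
Your proposal follows essentially the same route as the paper: expand $\rho_t^{Y(\omega)}$ as a mild-solution perturbation series (Lemma \ref{expansion lemma}), rewrite each term pathwise via integration by parts (Proposition \ref{ibp induction}), apply Kusuoka--Stroock gradient bounds to the semigroup factors to get the sharp-but-unsummable a priori estimates of Proposition \ref{first a priori estimate}, recover factorial decay by treating the series as a multiplicative functional in operator-valued spaces $H^{-1}\to H^1$ and invoking the rough-path Extension Theorem (Proposition \ref{main factorial bound}), and finally pass from $L^\infty$ to $L^p$ by duality (plus Riesz--Thorin interpolation). This matches the paper's strategy step for step, so there is nothing to add.
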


\begin{remark}
The random variable $C\left(  \omega\right)  $ only depends on $Y\left(
\omega\right)  $ via it's H\"{o}lder control as an Ito rough path (see Lemma
\ref{iterated integral} for details).
\end{remark}

Before we begin the proof of our main theorem we explore some immediate
consequences of the result. We first observe that we can obtain similar
estimates for the normalised conditional density.

\begin{corollary}
Under the assumptions of Theorem \ref{main-theorem} there exists a r.v.
$C\left(  \omega\right)  $ almost surely finite such that the normalised
conditional density $\pi_{t}^{{}}$ satisfies%
\begin{align*}
&  \left\Vert \left(  V_{\left[  \alpha_{1}\right]  }\cdots V_{\left[
\alpha_{j}\right]  }\pi_{t}\left(  V_{\left[  \alpha_{j+1}\right]  }\cdots
V_{\left[  \alpha_{m}\right]  }\varphi\right)  \right)  \left(  x\right)
\right\Vert _{\infty}\\
&  \leq C\left(  \omega\right)  t^{-\left(  \left\Vert \alpha_{1}\right\Vert
+\cdots+\left\Vert \alpha_{m}\right\Vert \right)  /2}\left\Vert \varphi
\right\Vert _{\infty}%
\end{align*}
for any $\varphi\in C_{b}^{\infty}\left(  \mathbb{R}^{N}\right)  ,$
$t\in(0,1]$.
\end{corollary}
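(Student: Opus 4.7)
The strategy is to reduce to Theorem \ref{main-theorem} via the Kallianpur--Striebel formula (\ref{kallstrie}), which gives $\pi_t(\psi)(x)=\rho_t^{Y(\omega)}(\psi)(x)/\rho_t^{Y(\omega)}(\mathbf{1})(x)$. Writing $f=\rho_t^{Y(\omega)}(\mathbf{1})$ and $g=\rho_t^{Y(\omega)}(V_{[\alpha_{j+1}]}\cdots V_{[\alpha_m]}\varphi)$, I apply the Leibniz rule repeatedly together with the identity $V(1/f)=-V(f)/f^2$ to expand $V_{[\alpha_1]}\cdots V_{[\alpha_j]}(g/f)$ as a finite combinatorial sum of terms of the form
\begin{equation*}
\frac{(D_{I_0} g)\,\prod_{r=1}^{p} D_{I_r} f}{f^{\,p+1}},
\end{equation*}
indexed by ordered partitions $(I_0,I_1,\ldots,I_p)$ of $\{1,\ldots,j\}$, where $D_I$ denotes the order-preserving composition of the $V_{[\alpha_i]}$ for $i\in I$, and the number of factors $p+1$ is at most $j+1$.

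Because $\mathbf{1}$ and $V_{[\alpha_{j+1}]}\cdots V_{[\alpha_m]}\varphi$ both lie in $C_b^\infty(\mathbb{R}^N)$, the first bound of Theorem \ref{main-theorem} applies to every factor in the numerator:
\begin{equation*}
\|D_{I_r} f\|_\infty \leq C(\omega)\, t^{-\|\alpha_{I_r}\|/2}, \qquad \|D_{I_0} g\|_\infty \leq C(\omega)\, t^{-(\|\alpha_{I_0}\|+\|\alpha_{j+1}\|+\cdots+\|\alpha_m\|)/2}\|\varphi\|_\infty,
\end{equation*}
where $\|\alpha_I\|:=\sum_{i\in I}\|\alpha_i\|$. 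Since $(I_0,I_1,\ldots,I_p)$ partitions $\{1,\ldots,j\}$, the negative $t$-exponents of each single summand sum to exactly $-(\|\alpha_1\|+\cdots+\|\alpha_m\|)/2$, matching the exponent required by the corollary.

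The only genuinely new ingredient is a pathwise, uniform-in-$x$ positive lower bound on the denominator, i.e.\ the claim that $c(\omega):=\inf_{x\in\mathbb{R}^N,\, t\in(0,1]} \rho_t^{Y(\omega)}(\mathbf{1})(x)>0$ almost surely. By Jensen's inequality applied to (\ref{processrho}),
\begin{equation*}
\rho_t^{Y(\omega)}(\mathbf{1})(x) = \mathbb{E}[Z_t^x\mid \mathcal{Y}_t] \geq \exp\bigl(\mathbb{E}[\log Z_t^x\mid \mathcal{Y}_t]\bigr).
\end{equation*}
The bounded-variation contribution to $\log Z_t^x$ is deterministically dominated by $\tfrac{1}{2}\|h\|_\infty^2$ for $t\leq 1$. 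For the It\^o integral, since $X^x$ and $Y$ are $P$-independent, conditioning on $\mathcal{Y}_t$ reduces $\mathbb{E}[\int_0^t h_i(X_s^x)dY_s^i\mid \mathcal{Y}_t]$ to $\int_0^t (P_s h_i)(x)\,dY_s^i$ pathwise (a Wiener integral with deterministic integrand, $P_s$ being the unperturbed Markov semigroup of $X$). Integration by parts rewrites this as $(P_th_i)(x)Y_t^i-\int_0^t Y_s^i(P_s Ah_i)(x)\,ds$, and the uniform boundedness of $P_s h_i$ and $P_s A h_i$ in $(s,x)$ (immediate since $h_i, A h_i\in C_b^\infty$) yields a pathwise bound in terms of $\|h\|_{C_b^2}$, the vector fields $V_i$, and $\|Y\|_{\infty,[0,1]}$ only, uniformly in $x$ and $t\in(0,1]$.

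Combining the lower bound $f\geq c(\omega)$ with the per-term upper bounds from Theorem \ref{main-theorem} and absorbing the finitely many combinatorial constants together with finite powers of $c(\omega)^{-1}$ into a single random $C(\omega)$ completes the proof. The main obstacle is securing the uniform-in-$x$ pathwise lower bound on $\rho_t^{Y(\omega)}(\mathbf{1})$; once that is in hand, the Leibniz bookkeeping and term-by-term estimation are routine consequences of Theorem \ref{main-theorem}.
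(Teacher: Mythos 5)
Your overall strategy is the same as the paper's: write $\pi_t(\psi)=\rho_t^{Y(\omega)}(\psi)/\rho_t^{Y(\omega)}(\mathbf{1})$ via Kallianpur--Striebel, push the outer derivatives through by the Leibniz/quotient rule, estimate every numerator factor by the first bound in Theorem \ref{main-theorem}, and absorb powers of $1/\rho_t^{Y(\omega)}(\mathbf{1})$ into the random constant. You correctly identify that the one non-routine ingredient is the a.s.\ pathwise, $x$-uniform positive lower bound on $\rho_t^{Y(\omega)}(\mathbf{1})$, which the paper records as inequality (\ref{massbound}) and proves in the appendix. Your Leibniz bookkeeping is a slightly more explicit account of the general-$j$ case than the paper, which writes out only the single-quotient-rule case $j=1$; both are fine, and the exponent accounting (the $\|\alpha_i\|$ being partitioned so the total $t$-power is always $-(\|\alpha_1\|+\cdots+\|\alpha_m\|)/2$) is correct.

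Where you diverge from the paper is in the proof of the lower bound on $\rho_t^{Y(\omega)}(\mathbf{1})(x)=\mathbb{E}[Z_t^x\mid\mathcal{Y}_t]$. The paper applies Jensen to the convex function $z\mapsto z^{-1}$, reducing the problem to bounding $\mathbb{E}[(Z_t^x)^{-1}\mid\mathcal{Y}_t]$; it then integrates by parts in the $dY$-integral of $-\log Z_t^x$ to trade $\int h\,dY$ for Riemann integrals plus a residual exponential martingale in $B$ whose conditional expectation is one, giving a bound $\exp\{C(\sup_s|Y_s|+\sup_s|Y_s|^2)\}$ uniform in $x$ and $t\in(0,1]$. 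You instead apply Jensen to $\exp$, getting $\mathbb{E}[Z_t^x\mid\mathcal{Y}_t]\geq\exp(\mathbb{E}[\log Z_t^x\mid\mathcal{Y}_t])$, and then evaluate $\mathbb{E}[\int_0^t h_i(X_s^x)\,dY_s^i\mid\mathcal{Y}_t]$ via a conditional stochastic Fubini theorem (valid because $X$ and $Y$ are $P$-independent) to get a Wiener integral $\int_0^t(P_sh_i)(x)\,dY_s^i$, which you then control by integration by parts. Both routes are valid and yield the same uniform-in-$(x,t)$ pathwise bound in terms of $\sup_{s\leq1}|Y_s|$. Your version is arguably more direct and avoids the auxiliary martingale $\eta_t^x$, at the cost of invoking the conditional Fubini step, which you should cite or spell out (it is the same kind of argument that underlies the Kallianpur--Striebel formula itself). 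In short: same proof architecture, a genuinely different but equally elementary derivation of the denominator bound.
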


\begin{proof}
We have
\begin{align}
V_{\left[  \alpha\right]  }\pi_{t}^{{}}\left(  V_{\left[  \beta\right]
}\varphi\right)  \left(  x\right)   &  =V_{\left[  \alpha\right]  }\left[
\rho_{t}^{Y\left(  \omega\right)  }\left(  V_{\left[  \beta\right]  }%
\varphi\right)  /\rho_{t}^{Y\left(  \omega\right)  }\left(  1\right)  \right]
\left(  x\right) \label{normalised}\\
&  =\frac{V_{\left[  \alpha\right]  }\rho_{t}^{Y\left(  \omega\right)
}\left(  V_{\left[  \beta\right]  }\varphi\right)  \rho_{t}^{Y\left(
\omega\right)  }\left(  1\right)  -\rho_{t}^{Y\left(  \omega\right)  }\left(
V_{\left[  \beta\right]  }\varphi\right)  V_{\left[  \alpha\right]  }\rho
_{t}^{Y\left(  \omega\right)  }\left(  1\right)  }{\left[  \rho_{t}^{Y\left(
\omega\right)  }\left(  1\right)  \right]  ^{2}}\nonumber\\
&  \leq C\left(  \omega\right)  t^{-\left(  \left\Vert \alpha\right\Vert
+\left\Vert \beta\right\Vert \right)  /2}\sup_{x\in R^{N}}\left(  1/\rho
_{t}^{Y\left(  \omega\right)  }\left(  1\right)  \right)  ^{2}\max\left(
\left\Vert \varphi\right\Vert _{\infty},1\right)  .\nonumber
\end{align}
which gives the estimates as, almost surely, (see the Appendix for a proof)%
\begin{equation}
\sup_{x\in R^{N}}\left(  1/\rho_{t}^{Y\left(  \omega\right)  }\left(
1\right)  \right)  <\infty. \label{massbound}%
\end{equation}

\end{proof}

Finally, the regularity estimates for the un-normalised conditional density
allow us to deduce estimates for the smoothness of the density of the
unnormalised conditional distribution of the signal with respect to the
Lebesgue measure. Assume that the vector fields \thinspace$V_{i}%
,i=0,...,d_{1}$ satisfy the uniform H\"{o}rmander condition and that $\pi
_{0}=\delta_{x}$ is the Dirac measure at $x$. Then
\[
\rho_{t}^{x}(\varphi)=\int_{\mathbb{R}^{d}}\varphi(y)\Psi_{t}^{x}(y)p_{t}%
^{x}\left(  y\right)  dy,
\]
where $y\rightarrow p_{t}^{x}\left(  y\right)  $ \ is the density of the law
of the signal $X_{t}^{x}$ with respect to the Lebesgue measure and
$y\rightarrow\Psi_{t}^{x}\left(  y\right)  $ is the likelihood function
\[
\Psi_{t}^{x}(y)={}\tilde{\mathbb{E}}[Z_{t}^{x}|X_{t}=y,\mathcal{Y}_{t}^{x}].
\]
We deduce from Theorem \ref{main-theorem} that
\begin{equation}
\Vert V_{\alpha}^{\ast}(\Psi_{t}^{x}p_{t}^{x})\Vert_{1}\leq Ct^{-\frac
{\Vert\alpha\Vert}{2}},\ \ \ t\in(0,1] \label{bound1}%
\end{equation}
where $V_{\alpha}^{\ast}$ is the adjoint operator of $V_{\alpha}$ for any
multi-index $\alpha\in A_{1}\left(  \ell\right)  .$

\section{\bigskip Proof of the main theorem\label{main theorem section}}

As a first step in the proof of our main theorem we expand the unnormalised
conditional distribution of the signal using its representation as the mild
solution of the Zakai equation as seen for example in \cite{ocone}. We have
\[
\rho_{t}^{Y\left(  \omega\right)  }(\varphi)\left(  x\right)  =P_{t}%
(\varphi)(x)+\sum_{i=1}^{d_{2}}\int_{0}^{t}\rho_{s}^{Y\left(  \omega\right)
}(h_{i}P_{t-s}(\varphi))\left(  x\right)  dY_{s}^{i}.
\]
To iterate this expansion we define the set of operators: $R_{\bar{t}%
,\bar{\imath}}$ where $\bar{t}=\left(  t_{1},t_{2},\ldots,t_{k}\right)  $ is a
non-empty multi-index with entries $t_{0},t_{1},\ldots,t_{k}\in\lbrack
0,\infty)\ $that have increasing values $t_{0}<t_{1}<...<t_{k}$ and
$\bar{\imath}=\left(  i_{1},...,i_{k-1}\right)  \,$\ is a multi-index with
entries $i_{1},...,i_{k-1}\in\left\{  1,2,...,d_{2}\right\}  \ $defined \
\[
R_{\left(  t_{0},t_{1}\right)  ,\varnothing}(\varphi)=P_{t_{1}-t_{0}}\left(
\varphi\right)
\]
and, inductively, for $k>1$,
\begin{align*}
R_{\left(  t_{0},t_{1},t_{2},\ldots,t_{k}\right)  },_{\left(  i_{1}%
,...,i_{k-1}\right)  }(\varphi)  &  =R_{\left(  t_{0},t_{1},\ldots
,t_{k-1}\right)  ,\left(  i_{1},...,i_{k-2}\right)  }\left(  h_{i_{k-1}%
}P_{t_{k}-t_{k-1}}(\varphi)\right) \\
&  =P_{t_{1}-t_{0}}\left(  h_{i_{1}}P_{t_{2}-t_{1}}\ldots\left(  h_{i_{k-1}%
}P_{t_{k}-t_{k-1}}(\varphi)\right)  \right) \\
&  =P_{t_{1}-t_{0}}\left(  h_{i_{1}}R_{\left(  t_{1},t_{2},\ldots
,t_{k}\right)  ,\left(  i_{2},...,i_{k-1}\right)  }(\varphi)\right)
\end{align*}
Note that the length of the multi-index $\bar{t}\ $is always two units more
than $\bar{\imath}$. In the following we will use the notation $S\left(
m\right)  $ to denote the set of all multi-indices%
\[
S\left(  m\right)  =\left\{  \left(  i_{1},...,i_{m}\right)  |\,\,1\leq
i_{j}\leq d_{2}\,,\ \ \,\,\,1\leq j\leq m\right\}  .
\]
and let $S=\bigcup_{m=1}^{\infty}S\left(  m\right)  $.

\begin{lemma}
\label{expansion lemma}We have almost surely that
\begin{equation}
\rho_{t}^{Y\left(  \omega\right)  }(\varphi)\left(  x\right)  =P_{t}%
(\varphi)(x)+\sum_{m=1}^{\infty}\sum_{\bar{\imath}\in S\left(  m\right)
}R_{0,t}^{m,\bar{\imath}}\left(  \varphi\right)  \label{expansion0}%
\end{equation}
where, for $\bar{\imath}=\left(  i_{1},...,i_{m}\right)  $,%
\[
R_{0,t}^{m,\bar{\imath}}\left(  \varphi\right)
=\underset{m\,\,\ \mathrm{times}}{\underbrace{\int_{0}^{t}\int_{0}^{t_{m}%
}\ldots\int_{0}^{t_{2}}}}R_{\left(  0,t_{1},\ldots,t_{m},t\right)
,\bar{\imath}}(\varphi)(x)dY_{t_{1}}^{i_{1}}\ldots dY_{t_{m}}^{i_{m}}.
\]

\end{lemma}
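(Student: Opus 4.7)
The natural strategy is to iterate Ocone's mild formulation of the Zakai equation until the partial sums produce exactly the claimed series, and then to show that the tail vanishes. I would start from the identity (quoted from \cite{ocone})
\[
\rho_t^{Y(\omega)}(\varphi)(x) = P_t(\varphi)(x) + \sum_{i=1}^{d_{2}}\int_0^t \rho_{s}^{Y(\omega)}(h_i P_{t-s}\varphi)(x)\,dY_s^{i},
\]
which is \eqref{expansion0} at level $m=0$ with a single stochastic-integral remainder. Substituting the same identity into the inner factor $\rho_{s}^{Y(\omega)}(h_{i_1} P_{t-s}\varphi)$ produces a closed term equal to $P_{s}(h_{i_1}P_{t-s}\varphi)=R_{(0,s,t),(i_1)}(\varphi)$, giving the $k=1$ summand, together with a double It\^{o} integral against $Y$. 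Because the recursive definition of $R_{\bar t,\bar\imath}$ is tailored precisely to this nesting, a direct induction on $m$ yields
\[
\rho_t^{Y(\omega)}(\varphi)(x) = P_t(\varphi)(x) + \sum_{k=1}^{m}\sum_{\bar\imath\in S(k)} R_{0,t}^{k,\bar\imath}(\varphi) + \mathcal{E}_m(\omega),
\]
where $\mathcal{E}_m$ is an $(m{+}1)$-fold iterated It\^{o} integral against $Y$ in which a single factor of $\rho$ still appears innermost. The inductive step itself is essentially bookkeeping: one only needs to check that the alternating pattern of multiplications by $h_{i_j}$ and semigroup applications matches the recursive definition of $R_{\bar t,\bar\imath}$, and that the time ordering is consistent with the convention $0<t_1<\cdots<t_m<t$.

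The substantive step is to show $\mathcal{E}_m\to 0$. Two uniform bounds suffice. First, since each $h_j\in C_b^{\infty}$ and $P_s$ is a contraction on bounded measurable functions, every composite integrand of the form $h_{i_1}P_{s_1}(h_{i_2}P_{s_2}(\cdots\varphi))$ is bounded in $L^{\infty}$ by $\|h\|_{\infty}^{m}\|\varphi\|_{\infty}$. Second, from $\rho_s^{Y(\omega)}(f)(x)=\mathbb{E}[f(X_s^x)Z_s^x\mid\mathcal{Y}_s]$ and the $L^{2}(\mathbb{P})$-boundedness of the exponential martingale $Z^x$ (uniformly in $x\in\mathbb{R}^{N}$ and $s\in[0,1]$, because the $h_i$ are bounded), one has $\|\rho_s^{Y(\omega)}(f)(x)\|_{L^{2}(\mathbb{P})}\le C\|f\|_{\infty}$. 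Applying It\^{o}'s isometry iteratively to the nested stochastic integrals (using that the integrand at each level is $\mathcal{Y}$-adapted and that $Y$ is a $\mathbb{P}$-Brownian motion) should yield a bound of the form
\[
\mathbb{E}\bigl[|\mathcal{E}_m|^{2}\bigr] \;\le\; C\,\bigl(d_{2}\|h\|_{\infty}^{2}\bigr)^{m+1}\,\|\varphi\|_{\infty}^{2}\,\frac{t^{m+1}}{(m+1)!}.
\]

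Since this bound is summable in $m$, the partial sums converge to $\rho_t^{Y(\omega)}(\varphi)(x)$ in $L^{2}(\mathbb{P})$, and by the Borel--Cantelli lemma applied along a subsequence, together with the standard monotone argument for series with summable $L^{2}$-increments, also $\mathbb{P}$-almost surely, which is the equality asserted in \eqref{expansion0}. I expect the main source of friction to be nothing conceptual but rather the careful management of the time ordering and adaptedness inside $\mathcal{E}_m$ when peeling off one layer of It\^{o} isometry at a time; once the adaptedness structure is written out this is a routine iterated-integral computation of the type that underlies the classical chaos expansion.
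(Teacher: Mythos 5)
Your proposal is correct and matches the paper's own proof in structure and substance: iterate the mild (Ocone) representation to obtain the partial sums together with an $(m+1)$-fold iterated stochastic integral remainder that still contains one factor of $\rho$ innermost, then bound $\mathbb{E}[|\mathcal{E}_m|^2]$ by applying It\^{o}'s isometry at each level and, at the innermost level, Jensen's inequality for conditional expectation combined with the uniform $L^2$-bound on the exponential martingale $Z^x$, yielding a summable bound of order $\|h\|_\infty^{2(m+1)}/(m+1)!$. The paper leaves the passage from this $L^2$-summability to almost-sure convergence implicit, whereas you flag it explicitly via Borel--Cantelli; that step is standard and both versions are correct.
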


\begin{proof}
Arguing by induction it is easy to see that
\[
\rho_{t}^{Y\left(  \omega\right)  }(\varphi)\left(  x\right)  =P_{t}%
(\varphi)(x)+\sum_{m=1}^{k}\sum_{\bar{\imath}\in S\left(  m\right)  }%
R_{0,t}^{m,\bar{\imath}}\left(  \varphi\right)  +\sum_{\bar{\imath}\in
S\left(  k+1\right)  }\text{Rem}_{0,t}^{k+1,\bar{\imath}}\left(
\varphi\right)  ,
\]
where%
\[
\text{Rem}_{0,t}^{k+1,\bar{\imath}}\left(  \varphi\right)
=\underset{k+1\,\,\ \mathrm{times}}{\underbrace{\int_{0}^{t}\int_{0}^{t_{k+1}%
}\ldots\int_{0}^{t_{2}}}}\rho_{t_{1}}^{Y\left(  \omega\right)  }(h_{i_{1}%
}P_{t_{2}-t_{1}}h_{i_{2}}\cdots h_{i_{k+1}}P_{t-t_{k+1}}(\varphi))\left(
x\right)  dY_{t_{1}}^{i_{1}}\cdots dY_{t_{k+1}}^{i_{k+1}}.
\]
Using iteratively Jensen's inequality and the It\^{o} isometry we see that
\begin{align*}
&  \mathbb{E}\left[  \text{Rem}_{0,1}^{k+1,\bar{\imath}}\left(  \varphi
\right)  ^{2}\right] \\
&  \leq\int_{0}^{1}\int_{0}^{t_{k+1}}\ldots\int_{0}^{t_{2}}\mathbb{E}\left[
\rho_{t_{1}}^{Y\left(  \omega\right)  }(h_{i_{1}}P_{t_{2}-t_{1}}h_{i_{2}%
}\cdots h_{i_{k+1}}P_{t-t_{k+1}}(\varphi))^{2}\right]  dt_{1}\cdots dt_{k+1}\\
&  \leq e^{t\left\vert \left\vert h\right\vert \right\vert _{\infty}}%
\frac{\left\vert \left\vert h\right\vert \right\vert _{\infty}^{2\left(
k+1\right)  }}{\left(  k+1\right)  !}\left\Vert \varphi\right\Vert _{\infty
}^{2},
\end{align*}
since, by Jensen's inequality
\[
\mathbb{E}\left[  \rho_{t_{1}}^{Y\left(  \omega\right)  }(h_{i_{1}}%
P_{t_{2}-t_{1}}h_{i_{2}}\cdots h_{i_{k+1}}P_{t-t_{k+1}}(\varphi))^{2}\right]
\leq\left\vert \left\vert h\right\vert \right\vert _{\infty}^{2k+2}%
\mathbb{E}\left[  \left(  Z_{t}^{x}\right)  ^{2}\right]  \leq e^{t\left\vert
\left\vert h\right\vert \right\vert _{\infty}}\left\vert \left\vert
h\right\vert \right\vert _{\infty}^{2\left(  k+1\right)  }.
\]

\end{proof}

\bigskip Before we can prove the main theorem we require three non-trivial
estimates for the regularity of the terms appearing in the expansion $\left(
\ref{expansion0}\right)  $ of $\rho_{t}^{Y\left(  \omega\right)  }(\varphi).$
The first is the aforementioned gradient estimate due Kusuoka and Strook for
the heat semi-group. The following the Theorem is due to Kusuoka-Stroock
\cite{k-s} under the uniform H\"{o}rmander condition and Kusuoka
$\cite{kusuoka}$ under the UFG\ assumption.

\begin{theorem}
\label{kusuoka-stroock}Suppose the family of vector fields $V_{i},$
$i=0,\ldots,d_{1}$ satisfies the UFG condition. Let $m\geq j\geq0,$
$\alpha_{1},\ldots,\alpha_{j},\ldots,\alpha_{m}\in A_{1}\left(  \ell\right)  $
then there exists a constant $C$ such that%
\[
\left\Vert V_{\left[  \alpha_{1}\right]  }\cdots V_{\left[  \alpha_{j}\right]
}P_{t}\left(  V_{\left[  \alpha_{j+1}\right]  }\cdots V_{\left[  \alpha
_{m}\right]  }\varphi\right)  \right\Vert _{p}\leq Ct^{-\left(  \left\Vert
\alpha_{1}\right\Vert +\cdots+\left\Vert \alpha_{m}\right\Vert \right)
/2}\left\Vert \varphi\right\Vert _{p}%
\]
for any $\varphi\in C_{0}^{\infty}\left(  R^{N}\right)  ,$ $t\in(0,1]$ and
$p\in\left[  1,\infty\right]  .$
\end{theorem}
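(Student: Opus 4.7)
The plan is to invoke the classical Malliavin-calculus strategy of Kusuoka-Stroock, since this theorem is attributed to \cite{k-s, kusuoka} and is being recalled as an input to the paper's main argument rather than reproved. The overall target is a weight representation
\begin{equation*}
V_{[\alpha_1]}\cdots V_{[\alpha_j]} P_t\bigl(V_{[\alpha_{j+1}]}\cdots V_{[\alpha_m]}\varphi\bigr)(x) = \mathbb{E}\bigl[\varphi(X_t^x)\,\Phi_{\alpha_1,\ldots,\alpha_m}(t,x)\bigr],
\end{equation*}
where the weight $\Phi$ satisfies $\|\Phi(t,x)\|_{L^q(\Omega)} \le C t^{-(\|\alpha_1\|+\cdots+\|\alpha_m\|)/2}$ for every $q \ge 1$, uniformly in $x$. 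Once such a representation is secured, the $L^\infty$-in-$x$ bound is immediate by taking the sup in $x$ outside and using $\|\varphi\|_\infty$; the $L^p$ bound for $p \in [1,\infty]$ follows by Jensen's inequality applied in $\omega$ together with a duality argument for $p=1$ that transfers the weight onto the adjoint semigroup (which is handled symmetrically).

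The construction of $\Phi$ has two halves. For the derivatives $V_{[\alpha_{j+1}]}\cdots V_{[\alpha_m]}$ acting on $\varphi$, one differentiates under the expectation, so each $V_{[\alpha_i]}(X_t^x)\varphi$ is realised as a directional derivative along the flow. The UFG identity $V_{[\alpha]} = \sum_{\beta \in A_1(\ell)} u_{\alpha,\beta} V_{[\beta]}$, transported by the Jacobian $J_t J_s^{-1}$, allows one to express each such direction as a linear combination of the admissible directions $J_t J_s^{-1} V_{[\beta]}(X_s)$, $\beta \in A_1(\ell)$, that lie in the image of the Malliavin derivative. Inverting the resulting linear system requires the reduced Malliavin covariance $M_t^{\ell}$ indexed by $A_1(\ell)$, after which one applies the Malliavin integration-by-parts formula to convert each derivative into a Skorokhod integral factor involving $(M_t^{\ell})^{-1}$. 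The derivatives $V_{[\alpha_1]}\cdots V_{[\alpha_j]}$ acting on $x$ are handled by differentiating the flow $x \mapsto X_t^x$ directly, using the same UFG trick to re-express spatial derivatives of $X_t^x$ along admissible directions, and iterating the integration-by-parts procedure on the already-weighted expression.

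The principal obstacle is the $L^q$ bound on $(M_t^{\ell})^{-1}$ under the UFG condition. Under uniform H\"ormander this is classical, but under UFG the full Malliavin matrix on $\mathbb{R}^N$ may be degenerate, so one must work with the \emph{reduced} matrix on the subspace spanned by admissible directions and show that, after rescaling the $\beta$-entries by $t^{\|\beta\|/2}$, the rescaled reduced matrix is non-degenerate with inverse in every $L^q$. This is the central technical contribution of \cite{kusuoka} and is proved via a stochastic Taylor expansion of $J_t J_s^{-1} V_{[\beta]}(X_s)$ that exposes the leading Gaussian structure after scaling. The same $t^{\|\beta\|/2}$ scaling, pushed through the integration-by-parts weights, produces the sharp exponent $-(\|\alpha_1\|+\cdots+\|\alpha_m\|)/2$.

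In practice, since the present paper treats Theorem \ref{kusuoka-stroock} as a prerequisite that feeds into the expansion in Lemma \ref{expansion lemma} and the a priori estimates used later, I would cite \cite{k-s, kusuoka} directly and only sketch the derivation above to orient the reader; the detailed $L^q$ analysis of the reduced Malliavin matrix under UFG, which is the genuinely hard ingredient, is the content of \cite{kusuoka} and need not be reproduced here.
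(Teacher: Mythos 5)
Your proposal matches the paper's treatment exactly: the paper does not prove Theorem \ref{kusuoka-stroock} but states it as a known result, attributing it to Kusuoka--Stroock \cite{k-s} under uniform H\"ormander and to Kusuoka \cite{kusuoka} under UFG, precisely as you propose to do. Your accompanying sketch of the Malliavin integration-by-parts strategy, the role of the reduced Malliavin covariance on the space spanned by the admissible directions $V_{[\beta]}$, $\beta\in A_1(\ell)$, and the $t^{\|\beta\|/2}$ rescaling that yields the sharp exponent is a faithful summary of the argument in those references.
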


\bigskip The second ingredient for the proof of the main theorem are the
following regularity estimates for the terms $R_{0,t}^{m,\bar{\imath}}.$

\begin{proposition}
\label{first a priori estimate}Under the assumptions of Theorem
\ref{main-theorem} let $\alpha,\beta\in$ $A_{1}\left(  \ell\right)  ,$
$\gamma\in\left(  1/3,1/2\right)  $ then there exist a r.v. $C\left(
\omega,m,\gamma\right)  >0$ a.s. finite such that
\[
\left\Vert V_{\left[  \alpha\right]  }R_{0,t}^{m,\bar{\imath}}V_{\left[
\beta\right]  }\varphi\right\Vert _{\infty}\leq C\left(  \omega,m,\gamma
\right)  t^{-\left(  \left\Vert \alpha\right\Vert +\left\Vert \beta\right\Vert
\right)  /2+m\gamma}\left\Vert \varphi\right\Vert _{\infty}%
\]
for all $\bar{\imath}\in S\left(  m\right)  ~,$ $\varphi\in C_{b}^{\infty
}\left(  \mathbb{R}^{N}\right)  $ and $t\in(0,1].$
\end{proposition}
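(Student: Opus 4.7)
The strategy is to view the $m$-fold Itô integral defining $R_{0,t}^{m,\bar\imath}$ as a pathwise (rough) integral and, via iterated integration by parts, trade each $dY_{t_k}^{i_k}$ for a Hölder increment of $Y$ of order $\|Y\|_\gamma t^\gamma$ at the cost of differentiating the integrand in $t_k$. The $m$ such exchanges produce the advertised $t^{m\gamma}$ gain, while the endpoint singularities are controlled by Theorem \ref{kusuoka-stroock}. Concretely, fix $\bar\imath = (i_1,\ldots,i_m)$ and write
\[
V_{[\alpha]} R_{0,t}^{m,\bar\imath}(V_{[\beta]}\varphi)(x) = \int_{0<t_1<\cdots<t_m<t} \Phi(t_1,\ldots,t_m)\,dY_{t_1}^{i_1}\cdots dY_{t_m}^{i_m},
\]
where $\Phi(t_1,\ldots,t_m) = V_{[\alpha]} P_{t_1}\bigl(h_{i_1} P_{t_2-t_1}(h_{i_2}\cdots h_{i_m} P_{t-t_m}(V_{[\beta]}\varphi))\bigr)(x)$; Theorem \ref{kusuoka-stroock} immediately yields $\|\Phi\|_\infty \lesssim t_1^{-\|\alpha\|/2}(t-t_m)^{-\|\beta\|/2}\|\varphi\|_\infty$, the interior $P_{t_{k+1}-t_k}$ being bounded contractions.

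Next, I would invoke the pathwise representation to be established in Section \ref{density expansion}. Processing the integrals from innermost outward, each $\int_0^{t_{k+1}}(\cdot)\,dY_{t_k}^{i_k}$ is rewritten at the rough-path level as a boundary term involving $Y_{t_{k+1}}^{i_k}$ minus an interior term $\int_0^{t_{k+1}} Y_{t_k}^{i_k}\,\partial_{t_k}\Phi\,dt_k$, modulo a Lévy-area correction (which is where the hypothesis $\gamma>1/3$ enters, rendering $Y$ a level-two rough path). The $t_k$-derivative acts on the two neighbouring heat factors via $\partial_{t_k}P_{t_k-t_{k-1}}=AP_{t_k-t_{k-1}}$ and $\partial_{t_k}P_{t_{k+1}-t_k}=-AP_{t_{k+1}-t_k}$; under the UFG assumption $A = V_0 + \tfrac12\sum_i V_i^2$ is a combination of vector fields in $A_1(\ell)$, so Theorem \ref{kusuoka-stroock} absorbs the extra derivatives into integrable singular kernels of order $(t_k-t_{k-1})^{-1+\gamma}$ and $(t_{k+1}-t_k)^{-1+\gamma}$.

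Collecting contributions, each of the $m$ rewritings yields a factor $\|Y\|_\gamma\cdot t^\gamma$ from the Hölder control of $Y$ and its Lévy area (whose norms are a.s. finite by Lemma \ref{iterated integral}), giving the aggregate $t^{m\gamma}$. The endpoint singularities $t_1^{-\|\alpha\|/2}$ and $(t-t_m)^{-\|\beta\|/2}$ remain intact, while the interior singularities of order $(t_{k+1}-t_k)^{-1+\gamma}$ integrate over the remaining simplex variables into nonnegative powers of $t$. Assembling everything yields the claimed bound with $C(\omega,m,\gamma)$ depending polynomially on $\|Y\|_\gamma$ and on a combinatorial factor in $m$.

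The main obstacle is the interior-singularity bookkeeping: after differentiation the kernel picks up $(t_{k+1}-t_k)^{-1}$, and only a fraction of the Hölder slack $\gamma$ is available to absorb it uniformly across every interior node without compounding losses destroying the overall $t^{m\gamma}$ power. A secondary point is that the pathwise integration by parts is a rough operation that requires $\gamma > 1/3$ so that the Lévy area of $Y$ suffices for the relevant sewing-type estimates to close; any $\gamma < 1/2$ is otherwise admissible because Brownian motion together with its iterated integrals is a.s. $\gamma$-Hölder for every $\gamma < 1/2$.
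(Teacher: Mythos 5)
Your high-level strategy is the same one the paper adopts: trade each stochastic integral $dY_{t_k}^{i_k}$ for a Hölder increment of $Y$ via integration by parts, picking up a $t^\gamma$ factor per level while the $t_k$-derivative lands on the neighbouring heat factors and is controlled by Theorem \ref{kusuoka-stroock}. However, there are two specific ideas missing, and without them the argument as stated does not close.

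First, the interior singularity you flag in your final paragraph is not merely a bookkeeping nuisance — it is fatal in the form you have written it. You claim $\partial_{t_k}$ produces $A P_{t_{k}-t_{k-1}}$ and $-A P_{t_{k+1}-t_k}$ and that Theorem \ref{kusuoka-stroock} absorbs this into kernels of order $(t_k - t_{k-1})^{-1+\gamma}$. That is not what the theorem gives: $A = V_0 + \tfrac12\sum V_i^2$ has Kusuoka degree $2$ (the indices $(0)$ and $(i,i)$ each have $\Vert\cdot\Vert = 2$), so the Kusuoka--Stroock bound yields $(t_k - t_{k-1})^{-1}$, which is not integrable over the simplex. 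The paper's resolution (see the operators $\Psi_{\bar\imath}$ defined before Proposition \ref{ibp induction} and the computation $\tfrac{d}{dt_k}P_{t_k-t_{k-1}}(h_{i_k}P_{t_{k+1}-t_k}\cdot) = P_{t_k-t_{k-1}}(\Psi_{(i_k)}P_{t_{k+1}-t_k}\cdot)$) is that the two $A$-contributions cancel to a commutator $\Psi_{(i_k)} = [H_{i_k}, A] = A(h_{i_k})\cdot + \sum_i V_i(h_{i_k})V_i(\cdot)$, which is first order — one degree lower than $A$ — producing an integrable $(t_k - t_{k-1})^{-1/2}$ singularity. You write the two $\pm A$ terms but do not combine them; without that commutation the interior singularities really are non-integrable.

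Second, even granting integrable interior kernels, your endpoint factors $t_1^{-\Vert\alpha\Vert/2}(t-t_m)^{-\Vert\beta\Vert/2}$ are themselves non-integrable as soon as $\Vert\alpha\Vert\ge 2$ or $\Vert\beta\Vert\ge 2$ (and these norms run up to $\ell$). The paper avoids this by not leaving the $V_{[\alpha]}$ and $V_{[\beta]}$ singularities at the endpoints: it identifies the maximal subinterval $[t_{j-1},t_j]$, which necessarily has length at least $t/(k+1)$, and then moves $V_{[\alpha]}$ forward (Lemma \ref{forward-intermediate}) and $V_{[\beta]}$ backward (Proposition \ref{backward prop}) so that both singularities land on this large interval. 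Because $(t_j - t_{j-1})^{-(\Vert\alpha\Vert+\Vert\beta\Vert)/2}\le (k+1)^{(\Vert\alpha\Vert+\Vert\beta\Vert)/2}\,t^{-(\Vert\alpha\Vert+\Vert\beta\Vert)/2}$, this gives a uniform $t^{-(\Vert\alpha\Vert+\Vert\beta\Vert)/2}$ factor while every remaining kernel factor contributes only $(t_i-t_{i-1})^{-1/2}$. Without this "meet in the middle at the maximal node" manoeuvre, the simplex integral does not converge, let alone deliver the stated asymptotics.
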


\bigskip

The preceding proposition implies that the short term asymptotics of the
regularity of $\rho_{t}$ are determined by the leading term of the expansion -
the heat semi-group $P_{t}f$ itself. The estimate is unfortunately not
summable in $m$ and will therefore only be used to control the regularity of
$R_{0,t}^{m,\bar{\imath}}$ for small $m.$ Before we proceed we state a second
set of a priori estimates that capture the regularity of the $R_{0,t}%
^{m,\bar{\imath}}$ in terms of operator norms on some carefully chosen spaces.
Note that these estimate do not lead to sharp short small time asymptotics and
will therefore only be used to estimate the regularity of $R_{0,t}%
^{m,\bar{\imath}}$ for sufficiently large values of $m. $

To derive the second set of factorially decaying estimates we regard the
$R_{0,t}^{m,\bar{\imath}}$ as linear operators acting on smooth functions
endowed with suitable norms. Noting that the heat kernels and the
multiplication operators defined by the sensor functions $h_{i}$ map
$C_{b}^{\infty}\left(  \mathbb{R}^{N}\right)  $ functions to $C_{b}^{\infty
}\left(  \mathbb{R}^{N}\right)  $ functions we see that $R_{0,t}%
^{m,\bar{\imath}}$ maps $C_{b}^{\infty}\left(  \mathbb{R}^{N}\right)  $ to
$C_{b}^{\infty}\left(  \mathbb{R}^{N}\right)  .$ We first define a
distribution space appropriate for our problem. For $\varphi\in C_{b}^{\infty
}\left(  \mathbb{R}^{N}\right)  $ let
\[
\left\Vert \varphi\right\Vert _{H^{-1}}:=\inf_{{}}\left\{  \sum_{\alpha\in
A_{0}\left(  \ell\right)  }\left\Vert \varphi_{\alpha}\right\Vert _{\infty
}:\varphi=\sum_{\alpha\in A_{0}\left(  \ell\right)  }V_{\alpha}\varphi
_{\alpha},\text{ }\varphi_{a}\in C_{b}^{\infty}\left(  \mathbb{R}^{N}\right)
\right\}  .
\]
Then $\left\Vert \cdot\right\Vert _{H^{-1}}$ defines a norm on $C_{b}^{\infty
}\left(  \mathbb{R}^{N}\right)  $ that is bounded above by $\left\Vert
\varphi\right\Vert _{\infty},$ but potentially smaller. Similarly we may
define a Sobolev type norm on $C_{b}^{\infty}\left(  \mathbb{R}^{N}\right)  $
by letting%
\[
\left\Vert \varphi\right\Vert _{H^{1}}:=\sum_{\alpha\in A_{0}\left(
\ell\right)  }\left\Vert V_{\left[  \alpha\right]  }\varphi\right\Vert
_{\infty}.
\]

Recall in this context that the index set $A_{0}\left(  \ell\right)  $
contains the empty set and we have set $V_{\left[  \emptyset\right]  }=Id$.

\bigskip

\begin{proposition}
\label{main factorial bound}Under the assumptions of Theorem
\ref{main-theorem} there exist constants $\theta>0,$ $\gamma^{\prime}%
\in\left(  1/3,1/2\right)  $ , $m_{0}\left(  \gamma^{\prime}\right)
\in\mathbb{N}$ and a random variable $c(\gamma^{\prime},\omega)$, almost
surely finite, such that
\begin{equation}
\left\Vert R_{0,t}^{m,\bar{\imath}}\right\Vert _{H^{-1}\rightarrow H^{1}}%
\leq\frac{\left(  c\left(  \gamma^{\prime},\omega\right)  t\right)
^{m\gamma^{\prime}}}{\theta\left(  m\gamma^{\prime}\right)  !}%
\end{equation}
for all $m\geq m_{0},$ $\bar{\imath}\in S\left(  m\right)  ~$and $t\in(0,1].$
\end{proposition}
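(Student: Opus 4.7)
The strategy is to recognize $(R^{m,\bar{\imath}}_{s,t})_{m\ge 0,\,\bar{\imath}\in S(m)}$ as a multiplicative functional in the rough-path sense with values in $\mathcal{L}(H^{-1},H^{1})$, to use Proposition~\ref{first a priori estimate} to upgrade its pointwise bounds to a polynomial bound in this operator norm once $m$ is sufficiently large, and then to run a Lyons-type Extension/sewing argument in $\mathcal{L}(H^{-1},H^{1})$ to extract the factorial decay $1/(m\gamma')!$. Composition of such operators is meaningful and submultiplicative since the trivial inequalities $\|\cdot\|_{H^{-1}}\le\|\cdot\|_{\infty}\le\|\cdot\|_{H^{1}}$ give a continuous inclusion $H^{1}\hookrightarrow H^{-1}$ of norm one.

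\textbf{Chen's relation and polynomial control.} For $0\le s<u<t$, splitting the simplex $\{s<t_{1}<\cdots<t_{m}<t\}$ according to the position of $u$ among the $t_{j}$'s, and using the semigroup identity $P_{t_{k+1}-t_{k}}=P_{u-t_{k}}\circ P_{t_{k+1}-u}$ together with the fact that $P_{u-t_{k}}$ commutes with integration against $Y$ (by independence of $Y$ from the diffusion), one obtains, with the convention $R^{0}_{s,t}=P_{t-s}$,
\[
R^{m,\bar{\imath}}_{s,t}=\sum_{k=0}^{m}R^{k,(i_{1},\ldots,i_{k})}_{s,u}\circ R^{m-k,(i_{k+1},\ldots,i_{m})}_{u,t}.
\]
Next, applying Proposition~\ref{first a priori estimate} (and its obvious translation-invariant analogue on $[s,t]$) with $\alpha,\beta\in A_{0}(\ell)$ and summing over the finite index set yields
\[
\|R^{m,\bar{\imath}}_{s,t}\|_{H^{-1}\to H^{1}}\le C(\omega,m,\gamma)\,|t-s|^{m\gamma-M},\qquad M:=\max_{\alpha,\beta\in A_{0}(\ell)}\frac{\|\alpha\|+\|\beta\|}{2}.
\]
Fixing $\gamma'\in(1/3,1/2)$, $\gamma\in(\gamma',1/2)$, and $m_{0}=\lceil M/(\gamma-\gamma')\rceil$, this becomes a genuine polynomial bound $\|R^{m,\bar{\imath}}_{s,t}\|_{H^{-1}\to H^{1}}\le c(\omega)\,|t-s|^{m\gamma'}$ for every $m\ge m_{0}$.

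\textbf{Extension to factorial decay.} Iterate Chen's relation on a uniform partition $\tau_{j}=s+j(t-s)/r$ of $[s,t]$ with $r$ of order $m$:
\[
R^{m,\bar{\imath}}_{s,t}=\sum_{k_{1}+\cdots+k_{r}=m}R^{k_{1},\bar{\imath}^{(1)}}_{\tau_{0},\tau_{1}}\circ\cdots\circ R^{k_{r},\bar{\imath}^{(r)}}_{\tau_{r-1},\tau_{r}}.
\]
Bounding each factor with $k_{j}\ge m_{0}$ by the polynomial estimate above and controlling the boundedly many $k_{j}<m_{0}$ factors separately, the product $\prod_{j}(|t-s|/r)^{k_{j}\gamma'}=(|t-s|/r)^{m\gamma'}$, together with the combinatorial count $\binom{m+r-1}{r-1}$ and Stirling's formula, produces the target estimate $(c(\gamma',\omega)|t-s|)^{m\gamma'}/(\theta(m\gamma')!)$. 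This is the operator-valued analogue, in the spirit of Deya--Gubinelli--Tindel, of the factorial estimate underlying Lyons' Extension Theorem for multiplicative functionals.

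\textbf{Main obstacle.} The delicate point is the sewing step. The $k=0$ and $k=r$ contributions in Chen's relation involve heat-kernel factors $\|P_{\tau_{j+1}-\tau_{j}}\|_{H^{-1}\to H^{1}}\lesssim(\tau_{j+1}-\tau_{j})^{-M}$ that blow up as the partition is refined, so a naive dyadic recursion does not close. The way out exploits the $m\ge m_{0}$ restriction: the partition is tuned so that \emph{every} factor $R^{k_{j},\bar{\imath}^{(j)}}_{\tau_{j-1},\tau_{j}}$ with $k_{j}\ge m_{0}$ receives the polynomial estimate, while only a bounded (partition-independent) number of factors can have $k_{j}<m_{0}$; the non-polynomial contributions of the latter are absorbed into $c(\gamma',\omega)$. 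Verifying this clean separation of regimes, and that the engineered norms $H^{-1}$, $H^{1}$---tailored so that the $V_{[\alpha]}$'s and the multiplications by $h_{i}$ interact compatibly with the Kusuoka--Stroock gradient bounds---are genuinely compatible with the extension combinatorics, is the heart of the proof.
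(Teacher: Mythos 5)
Your overall framework is right: you correctly identify the multiplicative (Chen) structure of $(R^{m,\bar{\imath}}_{s,t})$ as an operator-valued functional, you correctly note that Proposition \ref{first a priori estimate} yields polynomial H\"older bounds that become free of singularity once $m\geq m_0$, and you correctly locate the difficulty in the singular low-level terms. However, the resolution you propose of that difficulty is flawed and does not close the argument.

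The gap is your assertion that, in the full iterated Chen decomposition over a uniform partition $\tau_0<\dots<\tau_r$ with $r$ of order $m$, ``only a bounded (partition-independent) number of factors can have $k_j<m_0$.'' This is false. If $r\sim m$ and $k_1+\cdots+k_r=m$ with $k_j\geq 0$, then the number of indices with $k_j\geq m_0$ is at most $m/m_0$, so at least $r-m/m_0\sim m(1-1/m_0)$ indices necessarily have $k_j<m_0$ --- a number growing linearly in $m$, not bounded. A typical term in the sum has most $k_j\in\{0,1\}$, and each $R^0_{\tau_{j-1},\tau_j}=P_{\tau_j-\tau_{j-1}}$ carries an $H^{-1}\to H^1$ norm of order $((t-s)/r)^{-2\ell}$, which diverges as the mesh is refined. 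Thus the uniform-partition estimate cannot be patched by ``absorbing boundedly many bad factors into the constant.''

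The paper avoids this by not passing to a full refinement at all: it runs the standard Lyons coarsening argument (remove one partition point at a time), where the only quantity that must be controlled at each step is the two-factor Chen discrepancy $\sum_{i=1}^{n}Q^i_{t_{j-1},t_j}Q^{n+1-i}_{t_j,t_{j+1}}$. The key idea missing from your proposal is a \emph{mixed-norm splitting} of this sum: for $i<m_0$ the paper uses $\Vert R^{i}\Vert_{H^1\to H^1}$ (no singularity, factorial bound already established in Lemma \ref{main bound cor}) together with $\Vert R^{n+1-i}\Vert_{H^{-1}\to H^1}$ from the inductive hypothesis (valid because $n+1-i>m_0$ when $n\geq 2m_0$); for $i\geq m_0$ it uses $\Vert R^{i}\Vert_{H^{-1}\to H^1}$ (inductive hypothesis) together with $\Vert R^{n+1-i}\Vert_{H^{-1}\to H^{-1}}$ (again no singularity). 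This requires first proving, via the unmodified extension theorem, the factorially decaying bounds in the ``diagonal'' operator norms $H^1\to H^1$ and $H^{-1}\to H^{-1}$ --- precisely because at levels $1,2$ those norms are bounded with no singularity, so the extension theorem applies directly with $q=1/\gamma\in(2,3)$ --- and then bootstrapping the off-diagonal $H^{-1}\to H^1$ bound from the two diagonal ones plus the a priori estimate on the window $n\in[m_0,2m_0]$ as the induction base. Your proposal contains neither the preliminary diagonal-norm extension nor the mixed-norm decomposition, and without them the factorial decay cannot be extracted.
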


Combining the previous estimates we are ready to prove our main theorem.

\begin{proof}
[Proof of Theorem \ref{main-theorem}]We are going to show that\bigskip\ there
exists a positive random variable $c\left(  \omega\right)  $ almost surely
finite such that
\[
\sup_{x_{0}\in R^{N}}\left\Vert V_{\left[  \alpha\right]  }\rho_{t}^{Y\left(
\omega\right)  }(V_{\left[  \beta\right]  }\varphi)\right\Vert _{\infty}\leq
c\left(  \omega\right)  t^{-\left(  \left\Vert \alpha\right\Vert +\left\Vert
\beta\right\Vert \right)  /2}\left\vert \left\vert \varphi\right\vert
\right\vert _{\infty}.
\]
for any $t\in(0,1]$ and $\varphi\in C_{b}^{\infty}\left(  \mathbb{R}%
^{N}\right)  $.

Fix $\gamma\in\left(  1/3,1/2\right)  $ and let $\gamma^{\prime},\theta$ and
$m_{0}$ as in Proposition \ref{main factorial bound}. We have by Lemma
\ref{expansion lemma}
\begin{align}
\left\Vert V_{\left[  \alpha\right]  }\rho_{t}^{Y\left(  \omega\right)
}(V_{\left[  \beta\right]  }\varphi)\right\Vert _{\infty}  &  \leq\left\Vert
V_{\left[  \alpha\right]  }P_{t}(V_{\left[  \beta\right]  }\varphi
)(x)\right\Vert _{\infty}\nonumber\\
&  +\sum_{k=1}^{m_{0}}\sum_{\bar{\imath}\in S\left(  k\right)  }\left\Vert
V_{\left[  \alpha\right]  }R_{0,t}^{k,\bar{\imath}}\left(  V_{\left[
\beta\right]  }\varphi\right)  \,\,\ \right\Vert _{\infty}\ \,\ +\sum
_{k=m_{0}+1}^{\infty}\sum_{\bar{\imath}\in S\left(  k\right)  }\left\Vert
V_{\left[  \alpha\right]  }R_{0,t}^{k,\bar{\imath}}\left(  V_{\left[
\beta\right]  }\varphi\right)  \,\,\ \right\Vert _{\infty}. \label{splitup}%
\end{align}
Now
\begin{align*}
\left\Vert V_{\left[  \alpha\right]  }R_{0,t}^{k,\bar{\imath}}\left(
V_{\left[  \beta\right]  }\varphi\right)  \,\,\ \right\Vert _{\infty}  &
\leq\left\Vert R_{0,t}^{k,\bar{\imath}}\left(  V_{\left[  \beta\right]
}\varphi\right)  \,\right\Vert _{H^{1}}\\
&  \leq\left\Vert R_{0,t}^{k,\bar{\imath}}\right\Vert _{H^{-1}\rightarrow
H^{1}}\left\Vert V_{\left[  \beta\right]  }\varphi\right\Vert _{H^{-1}}\\
&  \leq\left\Vert R_{0,t}^{k,\bar{\imath}}\right\Vert _{H^{-1}\rightarrow
H^{1}}\left\Vert \varphi\right\Vert _{\infty}.
\end{align*}
Therefore using Theorem $\ref{kusuoka-stroock}$ for the first, Proposition
\ref{first a priori estimate} for the second and Proposition
\ref{main factorial bound} for the third term in the sum on the right hand
side of $\left(  \ref{splitup}\right)  $ we see that
\begin{align*}
\left\Vert V_{\left[  \alpha\right]  }\rho_{t}^{Y\left(  \omega\right)
}(V_{\left[  \beta\right]  }\varphi)\right\Vert _{\infty}  &  \leq t^{-\left(
\left\Vert \alpha\right\Vert +\left\Vert \beta\right\Vert \right)
/2}\left\vert \left\vert \varphi\right\vert \right\vert _{\infty}+\sum
_{k=1}^{m_{0}}c_{k}t^{-\left(  \left\Vert \alpha\right\Vert +\left\Vert
\beta\right\Vert \right)  /2+k\gamma}\left\vert \left\vert \varphi\right\vert
\right\vert _{\infty}\\
&  +\sum_{k=m_{0}+1}^{\infty}t^{k\gamma^{\prime}}\frac{c\left(  \gamma
^{\prime},\omega,d_{2}\right)  ^{k}}{\theta\left(  k\gamma^{\prime}\right)
!}\left\vert \left\vert \varphi\right\vert \right\vert _{\infty}\,\,\ \,\\
&  \leq c\left(  \omega\right)  t^{-\left(  \left\Vert \alpha\right\Vert
+\left\Vert \beta\right\Vert \right)  /2}\left\vert \left\vert \varphi
\right\vert \right\vert _{\infty}%
\end{align*}
where%
\[
c\left(  \omega\right)  =1+\sum_{k=1}^{m_{0}}c_{k}+\sum_{k=m_{0}+1}^{\infty
}\frac{c\left(  \gamma^{\prime},\omega,d_{2}\right)  ^{k}}{\theta\left(
k\gamma^{\prime}\right)  !}.
\]
The proof may be generalised to higher derivatives by noting that all the
estimates for the smoothness of the integral kernels in section
\ref{section integral kernels} may be generalised using straightforward
induction arguments. Similarly the Sobolev and distribution spaces $H^{1}$ and
$H^{-1}$ may be generalised to accommodate higher derivatives. Clearly, the
constants and the parameter $m_{0}$ in equation $\left(  \ref{splitup}\right)
$ will depend on the number of derivatives.

For the proof of the second part of the theorem, the general \thinspace$L^{p}$
estimate, we follow Kusuoka \cite{kusuoka}. First observe that
\begin{equation}
\left\Vert \varphi\right\Vert _{1}=\sup_{%
\begin{array}
[c]{c}%
\left\Vert g\right\Vert _{\infty}\leq1\\
g\in C_{0}^{\infty}\left(  R^{N}\right)
\end{array}
}\left\vert \int\varphi g\right\vert . \label{l1 duality}%
\end{equation}
Next we identify the (formal) adjoint of the heat semi group $P_{t}\varphi.$
Let
\[
\widetilde{c}=\operatorname{div}\left(  V_{0}\right)  +\frac{1}{2}\sum
_{j=1}^{d}V_{j}\left(  \operatorname{div}\left(  V_{j}\right)  \right)
+\frac{1}{2}\sum_{j=1}^{d}\left(  \operatorname{div}\left(  V_{j}\right)
\right)  ^{2}%
\]
and
\[
\widetilde{V}_{0}=-V_{0}+\frac{1}{2}\sum_{j=1}^{d}V_{j}\left(
\operatorname{div}\left(  V_{j}\right)  \right)  .
\]
Let $\widetilde{X}_{t}$ be the diffusion associated to the vector fields
$\left(  \widetilde{V}_{0},V_{1},\ldots,V_{d}\right)  $ and define for $x\in
R^{N}$%
\[
P_{t}^{\ast}\varphi\left(  x\right)  :=E\left(  \exp\left(  \int_{0}%
^{t}\widetilde{c}\left(  \widetilde{X}_{s}^{x}\right)  ds\right)
\varphi\left(  \widetilde{X}_{t}^{x}\right)  \right)  .
\]
The following Theorem is a particular case of a result that may be found in
Kusuoka, Stroock \cite{k-s}.

\begin{theorem}
[Kusuoka-Stroock]\label{duality theorem}Let $\varphi\in C_{0}^{\infty}\left(
R^{N}\right)  $ and $g\in C_{0}^{\infty}\left(  R^{N}\right)  $ then we have%
\[
\int P_{t}\varphi\left(  x\right)  g\left(  x\right)  dx=\int\varphi\left(
x\right)  P_{t}^{\ast}g\left(  x\right)  dx,
\]
i.e. the semi group $P_{t}^{\ast}$ is the (formal) adjoint to $P_{t}.$
\end{theorem}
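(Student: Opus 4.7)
The plan is to verify that the $L^{2}(dx)$-formal adjoint $A^{\ast}$ of the diffusion generator $A = V_{0} + \tfrac{1}{2}\sum_{i=1}^{d_{1}}V_{i}^{2}$ coincides with the perturbed generator $\widetilde{A} + \widetilde{c}$ of the Feynman--Kac semigroup $P_{t}^{\ast}$, where $\widetilde{A} = \widetilde{V}_{0} + \tfrac{1}{2}\sum_{i=1}^{d_{1}}V_{i}^{2}$ is the Stratonovich generator of $\widetilde{X}$. Once this operator identity is in place, the duality follows from a classical parabolic energy argument.

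First I would establish the adjoint identity by direct computation. For every smooth vector field $V$ one has $V^{\ast} = -V - \operatorname{div}(V)$ on $C_{0}^{\infty}(\mathbb{R}^{N})$, and iterating yields
\[
(V_{i}^{\ast})^{2} = V_{i}^{2} + 2\operatorname{div}(V_{i})V_{i} + V_{i}(\operatorname{div}(V_{i})) + (\operatorname{div}(V_{i}))^{2}.
\]
Summing over $i$ and collecting the resulting first-order and zeroth-order contributions, one matches them term by term against the definitions of $\widetilde{V}_{0}$ and $\widetilde{c}$ to conclude $A^{\ast}\psi = \widetilde{A}\psi + \widetilde{c}\psi$ for every $\psi \in C_{0}^{\infty}(\mathbb{R}^{N})$. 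This step is algebraically tedious but entirely routine.

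Next, by the Stratonovich Feynman--Kac formula, the functions $u(s,x) := P_{s}\varphi(x)$ and $v(s,x) := P_{s}^{\ast}g(x)$ are smooth for $s>0$, uniformly bounded on $[0,t]\times\mathbb{R}^{N}$, and satisfy the parabolic equations $\partial_{s}u = Au$ with $u(0,\cdot) = \varphi$ and $\partial_{s}v = \widetilde{A}v + \widetilde{c}v = A^{\ast}v$ with $v(0,\cdot) = g$. Setting $F(s) := \int u(t-s,x)v(s,x)\, dx$ for $s \in [0,t]$ and differentiating under the integral sign,
\[
F'(s) = \int\bigl[-(Au)(t-s,x)\cdot v(s,x) + u(t-s,x)\cdot A^{\ast}v(s,x)\bigr]\, dx = 0
\]
by the adjoint identity from step one. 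Hence $F(0) = F(t)$, which is precisely the claimed identity.

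The main obstacle is justifying the interchange of differentiation and integration when computing $F'(s)$, which in turn requires integrability of $u(t-s,x)v(s,x)$ and its $s$-derivatives, uniformly in $s \in [0,t]$. Since $\varphi, g \in C_{0}^{\infty}(\mathbb{R}^{N})$ have compact support and the vector fields $V_{i}$, $\widetilde{V}_{0}$ are $C_{b}^{\infty}$, standard sub-Gaussian tail bounds on the displacements of $X$ and $\widetilde{X}$ on $[0,t]$, together with the uniform boundedness of the Feynman--Kac exponential $\exp(\int_{0}^{s}\widetilde{c}(\widetilde{X}_{u}^{x})\,du)$, yield rapid spatial decay of $P_{t-s}\varphi(x)$ and $P_{s}^{\ast}g(x)$ away from the respective supports---more than enough to invoke dominated convergence. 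The remainder of the argument is standard.
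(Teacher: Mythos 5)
The paper does not actually prove this statement: it simply invokes Kusuoka--Stroock \cite{k-s} with the remark ``The following Theorem is a particular case of a result that may be found in Kusuoka, Stroock.'' So your proposal supplies a self-contained proof where the authors chose to cite. Your route --- identify $A^{\ast}$ with the Feynman--Kac generator $\widetilde{A}+\widetilde{c}$ on $C_0^{\infty}$, then run the standard interpolation argument $F(s)=\int u(t-s,x)v(s,x)\,dx$ and check $F'\equiv 0$ --- is the classical way to establish such semigroup duality, and the structure is sound: the Kolmogorov equations $\partial_s u=Au$, $\partial_s v=(\widetilde A+\widetilde c)v$ hold for $s>0$ under the $C_b^{\infty}$ hypotheses, continuity of $F$ at the endpoints gives $F(0)=F(t)$, and the issue of noncompact support (so that the formal adjoint relation $\int(Au)v=\int u\,A^{\ast}v$ can actually be used) is correctly identified as the one point needing care; your sub-Gaussian displacement bound for $X^x$ and $\widetilde X^x$ plus boundedness of the Feynman--Kac weight does provide the required domination.

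Two remarks you should address when writing this out in full. First, the operator-identity step is asserted rather than carried out, and it is precisely the place where the result lives; moreover, if you compute $A^{\ast}=-V_0-\operatorname{div}(V_0)+\tfrac12\sum_i\bigl(V_i^2+2\operatorname{div}(V_i)V_i+V_i(\operatorname{div}V_i)+(\operatorname{div}V_i)^2\bigr)$ and try to match it against the printed formulas
\[
\widetilde{V}_0=-V_0+\tfrac12\sum_j V_j(\operatorname{div}V_j),\qquad
\widetilde c=\operatorname{div}(V_0)+\tfrac12\sum_j V_j(\operatorname{div}V_j)+\tfrac12\sum_j(\operatorname{div}V_j)^2,
\]
you will find that they do not match as stated: the correction to $V_0$ should be the vector field $\sum_j(\operatorname{div}V_j)\,V_j$ (not the scalar $\tfrac12\sum_j V_j(\operatorname{div}V_j)$, and with coefficient $1$), and the sign on $\operatorname{div}(V_0)$ in $\widetilde c$ should be negative. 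These look like typos in the paper, but you cannot honestly call the matching ``routine'' against the text as written; carry the algebra through and state the corrected $\widetilde{V}_0$, $\widetilde c$ explicitly. Second, for the dominated-convergence step you need decay not only of $u(t-s,\cdot)v(s,\cdot)$ but of $\partial_s\bigl(u(t-s,\cdot)v(s,\cdot)\bigr)=-(Au)v+u(A^{\ast}v)$; this follows from $A P_{t-s}\varphi=P_{t-s}(A\varphi)$ with $A\varphi\in C_0^{\infty}$ (and similarly for $v$), so the same sub-Gaussian bound applies, but this should be said.
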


By Lemma \ref{expansion lemma} we may write%
\[
\rho_{t}^{Y\left(  \omega\right)  }=P_{t}+\sum_{m=1}^{\infty}\sum_{\bar
{\imath}\in S\left(  m\right)  }\int_{\Delta_{0,t}^{m}}P_{t_{1}}^{{}}H_{i_{1}%
}P_{t_{2}-t_{1}}^{{}}H_{i_{2}}\cdots H_{i_{m}}P_{t-t_{m}}^{{}}dY_{t_{1}%
}^{i_{1}}\cdots dY_{t_{m}}^{i_{m}},
\]
where $H_{i}$ are the (self-adjoint) multiplication operators corresponding
the (compactly supported) $h_{i}$. Iteratively applying Theorem
\ref{duality theorem}\ to the expansion of $\rho_{t}^{Y\left(  \omega\right)
}$ to identify the formal adjoint $\rho_{t}^{\ast}$ as
\begin{equation}
\rho_{t}^{\ast}=P_{t}^{\ast}+\sum_{m=1}^{\infty}\sum_{\bar{\imath}\in S\left(
m\right)  }\int_{\Delta_{0,t}^{m}}P_{t-t_{m}}^{\ast}H_{i_{m}}P_{t_{m}-t_{m-1}%
}^{\ast}H_{i_{m-1}}\cdots H_{i_{1}}P_{t_{1}}^{\ast}dY_{t_{1}}^{i_{1}}\cdots
dY_{t_{m}}^{i_{m}}. \label{formal adjoint}%
\end{equation}
Using $\left(  \ref{l1 duality}\right)  $ and $\left(  \ref{formal adjoint}%
\right)  $ we see that
\begin{align*}
&  \left\Vert V_{\left[  \alpha\right]  }\rho_{t}^{x}\left(  V_{\left[
\beta\right]  }\varphi\right)  \left(  Y\right)  \right\Vert _{1}\\
&  =\sup_{g\in C_{0}^{\infty},\left\Vert g\right\Vert _{\infty}\leq
1}\left\vert \int g\left(  x\right)  V_{\left[  \alpha\right]  }\rho
_{t}^{Y\left(  \omega\right)  }\left(  V_{\left[  \beta\right]  }%
\varphi\left(  x\right)  \right)  dx\right\vert \\
&  =\sup_{g\in C_{0}^{\infty},\left\Vert g\right\Vert _{\infty}\leq
1}\left\vert \int V_{\left[  \beta\right]  }^{\ast}\rho_{t}^{\ast}\left(
V_{\left[  \alpha\right]  }^{\ast}g\left(  x\right)  \right)  \varphi\left(
x\right)  dx\right\vert \\
&  \leq\sup_{g\in C_{0}^{\infty},\left\Vert g\right\Vert _{\infty}\leq
1}\left\Vert V_{\left[  \beta\right]  }^{\ast}\rho_{t}^{\ast}\left(
V_{\left[  \alpha\right]  }^{\ast}g\left(  x\right)  \right)  \right\Vert
_{\infty}\left\Vert \varphi\right\Vert _{1},
\end{align*}
where the formal adjoint of a vector field $V_{\left[  \alpha\right]  }$ is
given by
\[
V_{\left[  \alpha\right]  }^{\ast}=-V_{\left[  \alpha\right]  }-\sum_{i=1}%
^{N}\frac{\partial}{\partial x^{i}}V_{\left[  \alpha\right]  }^{i}.
\]
The arguments in the proof of Proposition \ref{first a priori estimate}
generalise easily allowing us to deduce the relevant estimates for the terms
in the expansion $\left(  \ref{formal adjoint}\right)  .$ Extending the proof
of Proposition \ref{main factorial bound} requires some small modifications
that are discussed in Remark \ref{final duality remark}. Going through the
steps in the proof of the first part of the theorem with $\rho_{t}^{\ast}$ in
place of $\rho_{t}$ we deduce that
\[
\left\Vert V_{\left[  \beta\right]  }^{\ast}\rho_{t}^{\ast}\left(  V_{\left[
\alpha\right]  }^{\ast}g\right)  \right\Vert _{\infty}\leq c\left(
\omega\right)  t^{-\left(  \left\Vert \alpha\right\Vert +\left\Vert
\beta\right\Vert \right)  /2}\left\vert \left\vert g\right\vert \right\vert
_{\infty},
\]
and the case of general $p\in\left[  1,\infty\right]  $ is a consequence of
classical Riesz-Thorin interpolation.
\end{proof}

\section{Pathwise representation of the perturbation expansion and some
preliminary estimates \label{density expansion}}

For the first step towards a proof of Proposition
\ref{first a priori estimate} we derive the pathwise representation for the
multiple stochastic integrals $R_{0,t}^{m,\bar{\imath}}\left(  \varphi\right)
$ as a sum of Riemann integrals with integrands depending on the Brownian
motion $Y$. We will require the following notation. For $k\in\mathbb{N}$ let
$\Delta_{s,t}^{k}$ denote the simplex defined by the relation
\[
s<t_{1}<\cdots<t_{k}<t
\]
and let
\[
d\overline{t}_{k}:=dt_{1}\cdots dt_{k}.
\]
For $\bar{\imath}=\left(  i_{1},...,i_{k}\right)  \in S\left(  k\right)  $ we
set%
\[
dY_{t}^{\bar{\imath}}=dY_{t_{1}}^{i_{1}}...dY_{t_{k}}^{i_{k}}.
\]

and define iterated integrals $q_{s,t}^{\bar{\imath}}\left(  Y\right)  $ by
setting
\[
q_{s,t}^{\bar{\imath}}\left(  Y\right)  :=\int_{\Delta_{s,t}^{k}}dY_{t}%
^{\bar{\imath}}=\underset{k\,\,\ \mathrm{times}}{\underbrace{\int_{s}^{t}%
\int_{s}^{t_{k}}\ldots\int_{s}^{t_{2}}}}dY_{t_{1}}^{i_{1}}...dY_{t_{k}}%
^{i_{k}}.
\]
\ Let $q_{s,\bar{t}}^{\bar{k}_{1},...,\bar{k}_{r}}\left(  Y\right)  $,
$\bar{k}_{1},...,\bar{k}_{r}\in S$, $\bar{t}=\left(  t_{1},...t_{r}\right)  $
\ be the products of iterated integrals
\[
q_{s,\bar{t}}^{\bar{k}_{1},...,\bar{k}_{r}}\left(  Y\right)  =\prod_{i=1}%
^{r}q_{s,t_{i}}^{\bar{k}_{i}}\left(  Y\right)  .
\]
Next define the sets $\Theta\left(  k\right)  $\
\[
\Theta\left(  k\right)  =\mathrm{sp}\left\{  q_{s,\bar{t}}^{\bar{k}%
_{1},...,\bar{k}_{r}}\left(  Y\right)  ,\,\ \bar{k}_{1},...,\bar{k}_{r}\in
S,\bar{t}=\left(  t_{1},...t_{r}\right)  ,\,\sum_{i=1}^{r}\left\vert \bar
{k}_{i}\right\vert =k\right\}
\]
and let $\Theta:=\bigcup_{k\in\mathbb{N}}\Theta\left(  k\right)  .$ For $q\in$
$\Theta$ we define its formal degree by setting $\deg\left(  q\right)  :=r,$
where $r$ is the unique number such that $q\in\Theta\left(  r\right)  .$ For
$\bar{\imath}=\left(  i_{1},...,i_{k}\right)  \in S\left(  k\right)  $ define
$\Phi_{\bar{\imath}},$ $\Psi_{\bar{\imath}},$ be the following operators
\begin{align*}
\Phi_{\bar{\imath}}\varphi &  =h_{i_{1}}...h_{i_{k}}\varphi\\
\Psi_{\bar{\imath}}\varphi &  =\left[  \Phi_{\bar{\imath}},A\right]  \left(
\varphi\right) \\
&  =A(h_{i_{1}}...h_{i_{k}})\varphi+\sum_{i=1}^{d}V_{i}(h_{i_{1}}...h_{i_{k}%
})V_{i}\varphi.
\end{align*}
and $\Gamma$ be the set of operators
\[
\Gamma_{\bar{\imath}}=\left\{  \Phi_{\bar{\imath}},\Psi_{\bar{\imath}}%
,\Psi_{\bar{\imath}}\Phi_{\bar{\imath}}\right\}  .
\]
In the following proposition we obtain a pathwise representation of the terms
in our expansion of the un-normalised conditional density. The proof will
exploit integration by parts formulas of the form
\[
\int_{0}^{t}q_{0,s}^{\left(  i_{1},...,i_{k}\right)  }\left(  Y\right)
\left(  \int_{0}^{s}Z_{r}dr\right)  dY_{s}^{i_{k+1}}=q_{0,t}^{\left(
i_{1},...,i_{k+1}\right)  }\left(  Y\right)  \int_{0}^{t}Z_{s}ds-\int_{0}%
^{t}q_{0,s}^{\left(  i_{1},...,i_{k+1}\right)  }\left(  Y\right)  Z_{s}ds.
\]

\begin{proposition}
\label{ibp induction}Let $\bar{\imath}=\left(  i_{1},..,i_{m}\right)  \in
S\left(  m\right)  $. Then we have, almost surely, that%
\begin{align}
R_{s,t}^{m,\bar{\imath}}\left(  \varphi\right)   &  =P_{t-s}(h_{i_{1}%
}...h_{i_{m}}\varphi)(x)q_{s,t}^{\bar{\imath}}\left(  Y\right) \nonumber\\
&  +\sum_{k=1}^{m-1}\sum_{\bar{j}_{1},...,\bar{j}_{k};\,\bar{\imath}=\bar
{j}_{1}\ast...\ast\bar{j}_{k}\,}a_{\left(  s,t\right)  }^{m,\bar{j}%
_{1},...,\bar{j}_{k}}\left(  Y\right)  \int_{\Delta_{s,t}^{k}}b_{\left(
s,t_{1},\ldots,t_{k}\right)  }^{m,\bar{j}_{1},...,\bar{j}_{k}}\left(
Y\right)  \bar{R}_{\left(  s,t_{1},\ldots,t_{k},t\right)  }^{m,\bar{j}%
_{1},...,\bar{j}_{k}}(\varphi)(x)d\overline{t}_{k}\nonumber\\
&  +\sum_{k=1}^{m}\sum_{\bar{j}_{1},...,\bar{j}_{k};\,\bar{\imath}=\bar{j}%
_{1}\ast...\ast\bar{j}_{k}\,}\int_{\Delta_{s,t}^{k}}c_{\left(  s,t_{1}%
,\ldots,t_{k}\right)  }^{m,\bar{j}_{1},...,\bar{j}_{k}}\left(  Y\right)
\hat{R}_{\left(  s,t_{1},\ldots,t_{k},t\right)  }^{m,\bar{j}_{1},...,\bar
{j}_{k}}(\varphi)(x)d\overline{t}_{k}, \label{complicatedform}%
\end{align}
and $a_{\left(  s,t\right)  }^{m,\bar{j}_{1},...,\bar{j}_{k}}\left(  Y\right)
,b_{\left(  s,t_{1},\ldots,t_{k}\right)  }^{m,\bar{j}_{1},...,\bar{j}_{k}%
}\left(  Y\right)  $, $c_{\left(  s,t_{1},\ldots,t_{k}\right)  }^{m,\bar
{j}_{1},...,\bar{j}_{k}}\left(  Y\right)  \in\Theta\;$are linear combinations
of (products of) iterated integrals of $Y\;$and $\bar{R}_{\left(
s,t_{1},\ldots,t_{k},t\right)  }^{m,\bar{j}_{1},...,\bar{j}_{k}}(\varphi)$,
respectively $\hat{R}_{\left(  s,t_{1},\ldots,t_{k},t\right)  }^{m,\bar{j}%
_{1},...,\bar{j}_{k}}(\varphi)$ are of the form%
\[
P_{t_{1}-s}\left(  \bar{\Phi}_{1}P_{t_{2}-t_{1}}\ldots\left(  \bar{\Phi}%
_{k}P_{t-t_{k}}(\varphi)\right)  \right)  ,
\]
where $\bar{\Phi}_{p}\in\Gamma_{\bar{j}_{p}},$ $p=1,..,k.$ Moreover we have
\begin{equation}
\deg\left(  a_{\left(  s,t\right)  }^{m,\bar{j}_{1},...,\bar{j}_{k}}\left(
Y\right)  \right)  +\deg\left(  b_{\left(  s,t_{1},\ldots,t_{k}\right)
}^{m,\bar{j}_{1},...,\bar{j}_{k}}\left(  Y\right)  \right)  =\deg\left(
c_{\left(  s,t_{1},\ldots,t_{k}\right)  }^{m,\bar{j}_{1},...,\bar{j}_{k}%
}\left(  Y\right)  \right)  =m. \label{degree}%
\end{equation}

\end{proposition}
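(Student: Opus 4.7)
The plan is to prove the proposition by induction on $m$, iteratively converting each stochastic integral into a Riemann integral via the deterministic integration-by-parts identity
\[
\int_s^u G(r)\,dY_r^i = G(u)\,q_{s,u}^i(Y) - \int_s^u G'(r)\,q_{s,r}^i(Y)\,dr,
\]
which is a consequence of It\^o's product rule applied to $G(r)\,q_{s,r}^i(Y)$ whenever $G$ is continuously differentiable in time. This is coupled with the heat-equation relation $\partial_r P_{r-s}\varphi = A P_{r-s}\varphi$ and the commutator identity $[A,\Phi_{\bar{j}}] = -\Psi_{\bar{j}}$, which together produce the operators appearing in $\Gamma_{\bar{j}}$.

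\emph{Base case $(m=1)$:} with $G(t_1) = P_{t_1-s}(h_{i_1}P_{t-t_1}\varphi)$, one IBP yields the boundary term $P_{t-s}(h_{i_1}\varphi)\,q_{s,t}^{i_1}(Y)$, which is the first summand of (\ref{complicatedform}), and the Riemann term $\int_s^t P_{t_1-s}(\Psi_{i_1}P_{t-t_1}\varphi)\,q_{s,t_1}^{i_1}(Y)\,dt_1$, which fills the $k=1$, single-block case of the last sum with $\bar{\Phi}_1=\Psi_{(i_1)}\in\Gamma_{(i_1)}$ and $c^{1,(i_1)}_{(s,t_1)}(Y) = q_{s,t_1}^{i_1}(Y)$ of degree $1$; relation (\ref{degree}) is trivial.

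\emph{Inductive step:} apply the IBP identity to the innermost stochastic integral in $R_{s,t}^{m,\bar{\imath}}(\varphi)$. The boundary contribution at $t_1=t_2$ collapses $P_{t_2-t_1}$ to the identity, fuses $h_{i_1}h_{i_2}$ into $\Phi_{(i_1,i_2)}$ and multiplies by $q_{s,t_2}^{i_1}(Y)$, leaving a stochastic integral of order $m-1$ in the shortened multi-index $((i_1,i_2),i_3,\ldots,i_m)$ to which the induction hypothesis applies; the compositions of the shortened multi-index pull back to compositions of $\bar{\imath}$ whose first block begins with $(i_1,i_2)$. The derivative contribution $\int_s^{t_2}P_{t_1-s}(\Psi_{i_1}P_{t_2-t_1}(h_{i_2}\cdots))\,q_{s,t_1}^{i_1}(Y)\,dt_1$ is handled by stochastic Fubini, pulling $dt_1$ outside the remaining outer $(m-1)$-fold stochastic integration, followed by the induction hypothesis applied to the resulting order-$(m-1)$ object in the indices $(i_2,\ldots,i_m)$; this produces terms in which the first composition block is $(i_1)$ with $\bar{\Phi}_1\in\{\Psi_{(i_1)},\Psi_{(i_1)}\Phi_{(i_1)}\}$, the $\Psi\Phi$ element of $\Gamma_{(i_1)}$ arising from a subsequent IBP boundary step that merges an adjacent $h$-factor into the same block as the already-generated $\Psi_{i_1}$. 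Each IBP raises the degree of the attached iterated-integral factor by exactly one, so after the $m$ conversions the total degree equals $m$, yielding (\ref{degree}); the $a$- and $b$-factors correspond respectively to the iterated integrals attached to the fully-resolved heat-kernel boundary term and to the part that remains Riemann-integrated.

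The main obstacle is the combinatorial and algebraic bookkeeping: one must verify that at every stage the operators produced lie in $\Gamma_{\bar{j}_p}$ attached to genuine blocks of a composition of $\bar{\imath}$ (hybrid cross-block operators such as $\Psi_{(i)}\Phi_{(j)}$ with $i\ne j$ must be read as attached to a two-block composition with its factors separated by a semigroup, rather than as a single merged block, while the $\Psi_{\bar{j}}\Phi_{\bar{j}}$ entry of $\Gamma_{\bar{j}}$ arises only via boundary fusions occurring inside a single block), and that the degree splitting between $a$- and $b$-factors is tracked consistently across each induction step so as to satisfy $\deg a + \deg b = \deg c = m$. The stochastic Fubini interchange used at each step is routine given the smoothness of the integrands and the continuity of $Y$.
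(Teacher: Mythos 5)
Your high-level plan (iterated time integration by parts via $\int_s^u G\,dY^i=G(u)\,q^i_{s,u}-\int_s^u q^i_{s,r}\,G'(r)\,dr$, paired with $\partial_r P_{r-s}\varphi=AP_{r-s}\varphi$ and the commutator $[\Phi,A]=\Psi$) is exactly the engine behind the paper's argument, and the base case $m=1$ is handled the same way. However, you propose to peel off the \emph{innermost} stochastic integral $dY^{i_1}_{t_1}$ at each induction step, whereas the paper peels off the \emph{outermost} one, writing $R_{s,t}^{m+1,\bar\imath*i_{m+1}}(\varphi)=\int_s^t R_{s,u}^{m,\bar\imath}\bigl(h_{i_{m+1}}P_{t-u}\varphi\bigr)\,dY_u^{i_{m+1}}$, applying the induction hypothesis to the integrand (legitimate, because for fixed $u$ the function $h_{i_{m+1}}P_{t-u}\varphi$ is simply an element of $C_b^\infty$), and then performing a single integration by parts on the outer integral. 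This choice of direction is not cosmetic, and it is where your plan has a real gap.

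Concretely, after one inner integration by parts the boundary contribution is
\[
\int_{\Delta^{m-1}_{s,t}} q^{i_1}_{s,t_2}(Y)\,P_{t_2-s}\bigl(h_{i_1}h_{i_2}P_{t_3-t_2}(\cdots)\bigr)\,dY^{i_2}_{t_2}\cdots dY^{i_m}_{t_m},
\]
which is \emph{not} of the form $R^{m-1,\bar j}_{s,t}$: the predictable factor $q^{i_1}_{s,t_2}(Y)$ sits \emph{inside} the remaining stochastic simplex and depends on the inner integration variable $t_2$. Your ``shortened multi-index $((i_1,i_2),i_3,\ldots,i_m)$'' is not an element of $S(m-1)$, and the induction hypothesis as stated in the proposition makes no provision for such a factor. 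Similarly, after a stochastic Fubini interchange the derivative contribution becomes (up to sign)
\[
\int_s^t q^{i_1}_{s,t_1}(Y)\left[\int_{\Delta^{m-1}_{t_1,t}} P_{t_1-s}\bigl(\Psi_{(i_1)}P_{t_2-t_1}(h_{i_2}\cdots)\bigr)\,dY^{i_2}_{t_2}\cdots dY^{i_m}_{t_m}\right]dt_1,
\]
where the innermost layer is a first-order differential operator $\Psi_{(i_1)}$ and the innermost heat kernel is $P_{t_1-s}$ for a time $s$ strictly before the simplex $[t_1,t]$; again this is not of the form $R^{m-1,\bar j}_{t_1,t}$. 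So ``to which the induction hypothesis applies'' is not correct as written. To rescue your inner-to-outer strategy you would have to formulate and prove a substantially strengthened induction hypothesis that permits (i) a $q$-type predictable weight inside the remaining stochastic simplex and (ii) an arbitrary element of $\Gamma$ (not merely $h$-multiplication) and a displaced heat kernel in the innermost slot, and only then check that the degree count $\deg a+\deg b=\deg c=m$ survives the bookkeeping. That is a genuinely different and more involved statement, and it is missing from the proposal.

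A secondary, smaller imprecision: you describe the $\Psi_{\bar j}\Phi_{\bar j}$ element of $\Gamma_{\bar j}$ as coming from ``a subsequent IBP boundary step that merges an adjacent $h$-factor into the same block.'' In the paper's induction these composite operators arise from the $t_k=t_{m+1}$ face of the simplex when differentiating a $k$-fold Riemann integral in its upper endpoint, which collapses $P_{t_{m+1}-t_k}$ to the identity and fuses $\bar\Phi_k$ with $\Phi_{(i_{m+1})}$; this is a boundary of the inner simplex, not a further integration by parts. The distinction matters for tracking which block in the composition $\bar\imath=\bar j_1*\cdots*\bar j_k$ each operator is attached to, and hence for verifying (\ref{degree}).
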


\begin{proof}
The proof follows by induction. For $m=1$ observe that
\begin{align*}
R_{s,t}^{1}\left(  \varphi\right)   &  =\int_{s}^{t}P_{t_{1}-s}(h_{i_{1}%
}P_{t-t_{1}}(\varphi))(x)dY_{t_{1}}^{i_{1}}\\
&  =P_{t-s}(h_{i_{1}}\varphi)(x)\int_{s}^{t}dY_{r}^{i_{1}}-\int_{s}^{t}\left(
\int_{s}^{t_{1}}dY_{r}^{i_{1}}\right)  \frac{d}{dt_{1}}P_{t_{1}-s}(h_{i_{1}%
}P_{t-t_{1}}(\varphi))(x)dt_{1},
\end{align*}
where%
\begin{align}
\frac{d}{dt_{1}}P_{t_{1}-s}(h_{i_{1}}P_{t-t_{1}}(\varphi))(x)  &  =P_{t_{1}%
-s}(A(h_{i_{1}}P_{t-t_{1}}(\varphi)))(x)-P_{t_{1}-s}(h_{i_{1}}AP_{t-t_{1}%
}(\varphi))(x)\nonumber\\
&  =P_{t_{1}-s}(\Psi_{\left(  i_{1}\right)  }P_{t-t_{1}}(\varphi
)))(x).\nonumber
\end{align}
so (\ref{complicatedform}) holds true with%
\[
c_{\left(  s,t_{1}\right)  }^{1,\left(  i_{1}\right)  }\left(  Y\right)
=\int_{s}^{t_{1}}dY_{r}^{i_{1}}%
\]
and, obviously (\ref{degree}) is satisfied. For the induction step, observe
that for $\bar{\imath}\ast i_{m+1}$
\[
R_{s,t}^{m+1,\bar{\imath}\ast i_{m+1}}\left(  \varphi\right)  =\int_{s}%
^{t}R_{s,t_{m+1}}^{m,\bar{\imath}}(h_{i_{m+1}}P_{t-t_{m+1}}(\varphi))\left(
x\right)  dY_{t_{m+1}}^{i_{m+1}}.
\]
Hence, assuming that $R_{s,t_{m+1}}^{m,\bar{\imath}}$ has an expansion of the
(\ref{complicatedform}), \ it follows that
\begin{equation}
R_{s,t}^{m+1,\bar{\imath}\ast i_{m+1}}\left(  \varphi\right)  =R_{s,t}%
^{1,m+1,\bar{\imath}\ast i_{m+1}}\left(  \varphi\right)  +R_{s,t}%
^{2,m+1,\bar{\imath}\ast i_{m+1}}\left(  \varphi\right)  +R_{s,t}%
^{3,m+1,\bar{\imath}\ast i_{m+1}}\left(  \varphi\right)  ,
\label{inductionstep}%
\end{equation}
where%
\[
R_{s,t}^{1,m+1,\bar{\imath}\ast i_{m+1}}\left(  \varphi\right)  =\int_{s}%
^{t}P_{t_{m+1}-s}(h_{i_{1}}...h_{i_{m+1}}P_{t-t_{m+1}}(\varphi))(x)\int%
\limits_{\Delta_{s,t_{m+1}}^{m}}dY_{t}^{\bar{\imath}}dY_{t_{m+1}}^{i_{m+1}}%
\]%
\begin{multline*}
R_{s,t}^{2,m+1,\bar{\imath}\ast i_{m+1}}\left(  \varphi\right)  =\sum
_{k=1}^{m-1}\sum_{\bar{j}_{1},...,\bar{j}_{k};\,\bar{\imath}=\bar{j}_{1}%
\ast...\ast\bar{j}_{k}\,}\int_{s}^{t}a_{\left(  s,t_{m+1}\right)  }^{m,\bar
{j}_{1},...,\bar{j}_{k}}\left(  Y\right) \\
\int\limits_{\Delta_{s,t_{m+1}}^{k}}b_{\left(  s,t_{1},\ldots,t_{k}\right)
}^{m,\bar{j}_{1},...,\bar{j}_{k}}\left(  Y\right)  \bar{R}_{\left(
s,t_{1},\ldots,t_{k},t_{m+1}\right)  }^{k,\bar{j}_{1},...,\bar{j}_{k}%
}(h_{i_{m+1}}P_{t-t_{m+1}}(\varphi))(x)d\overline{t}_{k}dY_{t_{m+1}}^{i_{m+1}}%
\end{multline*}%
\begin{multline*}
R_{s,t}^{3,m+1,\bar{\imath}\ast i_{m+1}}\left(  \varphi\right)  =\\
\sum_{k=1}^{m}\sum_{\bar{j}_{1},...,\bar{j}_{k};\,\bar{\imath}=\bar{j}_{1}%
\ast...\ast\bar{j}_{k}\,}\int_{s}^{t}\int\limits_{\Delta_{s,t_{m+1}}^{k}%
}c_{\left(  s,t_{1},\ldots,t_{k}\right)  }^{m,\bar{j}_{1},...,\bar{j}_{k}%
}\left(  Y\right)  \hat{R}_{\left(  s,t_{1},\ldots,t_{k},t_{m+1}\right)
}^{k,\bar{j}_{1},...,\bar{j}_{k}}(h_{i_{m+1}}P_{t-t_{m+1}}(\varphi
))(x)d\overline{t}_{k}dY_{t_{m+1}}^{i_{m+1}}.
\end{multline*}
We expand each of the three terms in (\ref{inductionstep}). For the first term
we have%
\begin{multline}
R_{s,t}^{1,m+1,\bar{\imath}\ast i_{m+1}}\left(  \varphi\right)  =P_{t-s}%
(h_{i_{1}}...h_{i_{m+1}}\varphi)(x)\int_{\Delta_{s,t}^{m+1}}dY_{t}%
^{\bar{\imath}\ast i_{m+1}}\nonumber\\
-\int_{s}^{t}\left(  \int_{s}^{t_{m+1}}\int_{\Delta_{s,r}^{m}}dY_{t}%
^{\bar{\imath}}dY_{r}^{i_{m+1}}\right)  \frac{d}{dt_{m+1}}P_{t_{m+1}%
-s}(h_{i_{1}}...h_{i_{m+1}}P_{t-t_{m+1}}(\varphi))(x)dt_{m+1}\nonumber
\end{multline}%
\begin{multline}
=P_{t-s}(h_{i_{1}}...h_{i_{m+1}}\varphi)(x)\int_{\Delta_{s,t}^{m+1}}%
dY_{t}^{\bar{\imath}\ast i_{m+1}}\\
-\int_{s}^{t}\left(  \int_{s}^{t_{m+1}}\int_{\Delta_{s,r}^{m}}dY_{t}%
^{\bar{\imath}}dY_{r}^{i_{m+1}}\right)  P_{t_{m+1}-s}(\Psi_{\bar{\imath}\ast
i_{m+1}}P_{t-t_{m+1}}(\varphi))(x)dt_{m+1}. \label{eq5a}%
\end{multline}
so the first term in the expansion of $R_{s,t}^{1,m+1,\bar{\imath}\ast
i_{m+1}}\left(  \varphi\right)  $ gives us the first term in the expansion of
(\ref{inductionstep}) and the second term in the expansion of $R_{s,t}%
^{1,m+1,\bar{\imath}\ast i_{m+1}}\left(  \varphi\right)  $ can be incorporated
in the last term in the expansion of (\ref{inductionstep}). Obviously,%
\[
\deg\left(  \int_{\Delta_{s,t}^{m+1}}dY_{t}^{\bar{\imath}\ast i_{m+1}}\right)
=m+1
\]
so (\ref{degree}) is satisfied. For the second term we have%
\begin{multline}
R_{s,t}^{2,m+1,\bar{\imath}\ast i_{m+1}}\left(  \varphi\right)  =\sum
_{k=1}^{m-1}\sum_{\bar{j}_{1},...,\bar{j}_{k};\,\bar{\imath}=\bar{j}_{1}%
\ast...\ast\bar{j}_{k}\,}\int_{s}^{t}a_{\left(  s,t_{m+1}\right)  }^{m,\bar
{j}_{1},...,\bar{j}_{k}}\left(  Y\right) \label{eq1}\\
\int_{s}^{t_{m+1}}\int\limits_{\Delta_{s,t_{k}}^{k-1}}b_{\left(
s,t_{1},\ldots,t_{k}\right)  }^{m,\bar{j}_{1},...,\bar{j}_{k}}\left(
Y\right)  \bar{R}_{\left(  s,t_{1},\ldots,t_{k},t_{m+1}\right)  }^{m,\bar
{j}_{1},...,\bar{j}_{k}}(h_{i_{m+1}}P_{t-t_{m+1}}(\varphi))(x)d\bar{t}%
_{k-1}dY_{t_{m+1}}^{i_{m+1}}\\
=\sum_{k=1}^{m-1}\sum_{\bar{j}_{1},...,\bar{j}_{k};\,\bar{\imath}=\bar{j}%
_{1}\ast...\ast\bar{j}_{k}\,}\int_{s}^{t}a_{\left(  s,t_{m+1}\right)
}^{m,\bar{j}_{1},...,\bar{j}_{k}}\left(  Y\right)  dY_{t_{m+1}}^{i_{m+1}}%
\int_{s}^{t}S_{s,t_{m+1}}^{2,m+1,\bar{j}_{1},...,\bar{j}_{k},i_{m+1}}%
(\varphi)dt_{m+1}\\
-\sum_{k=1}^{m-1}\sum_{\bar{j}_{1},...,\bar{j}_{k};\,\bar{\imath}=\bar{j}%
_{1}\ast...\ast\bar{j}_{k}\,}\int_{s}^{t}\left(  \int_{s}^{t_{m+1}}a_{\left(
s,r\right)  }^{m,\bar{j}_{1},...,\bar{j}_{k}}\left(  Y\right)  dY_{r}%
^{i_{m+1}}\right)  S_{s,t_{m+1}}^{2,m+1,\bar{j}_{1},...,\bar{j}_{k},i_{m+1}%
}(\varphi)dt_{m+1}%
\end{multline}
where%
\begin{multline*}
S_{s,t_{m+1}}^{2,m+1,\bar{j}_{1},...,\bar{j}_{k},i_{m+1}}\left(
\varphi\right) \\
=\frac{d}{dt_{m+1}}\int\limits_{\Delta_{s,t_{m+1}}^{k}}b_{\left(
s,t_{1},\ldots,t_{k}\right)  }^{m,\bar{j}_{1},...,\bar{j}_{k}}\left(
Y\right)  \bar{R}_{\left(  s,t_{1},\ldots,t_{k},t_{m+1}\right)  }^{m,\bar
{j}_{1},...,\bar{j}_{k}}(h_{i_{m+1}}P_{t-t_{m+1}}(\varphi))(x)d\bar{t}_{k}%
\end{multline*}%
\begin{multline*}
=\int\limits_{\Delta_{s,t_{m+1}}^{k-1}}b_{\left(  s,t_{1},\ldots,t_{k}\right)
}^{m,\bar{j}_{1},...,\bar{j}_{k}}\left(  Y\right)  \bar{R}_{\left(
s,t_{1},\ldots,t_{m+1},t_{m+1}\right)  }^{m,\bar{j}_{1},...,\bar{j}_{k}%
}(h_{i_{m+1}}P_{t-t_{m+1}}(\varphi))(x)d\bar{t}_{k-1}\\
+\int_{\Delta_{s,t_{m+1}}^{k}}b_{\left(  s,t_{1},\ldots,t_{k}\right)
}^{m,\bar{j}_{1},...,\bar{j}_{k}}\left(  Y\right)  \bar{R}_{\left(
s,t_{1},\ldots,t_{k},t_{m+1}\right)  }^{m,\bar{j}_{1},...,\bar{j}_{k}%
}(A\left(  h_{i_{m+1}}P_{t-t_{m+1}}(\varphi)\right)  -h_{i_{m+1}}A\left(
P_{t-t_{m+1}}(\varphi)\right)  )(x)d\bar{t}_{k}%
\end{multline*}%
\begin{multline}
=\int\limits_{\Delta_{s,t_{m+1}}^{k-1}}b_{\left(  s,t_{1},\ldots,t_{k}\right)
}^{m,\bar{j}_{1},...,\bar{j}_{k}}\left(  Y\right)  \bar{R}_{\left(
s,t_{1},\ldots,t_{m+1},t_{m+1}\right)  }^{m,\bar{j}_{1},...,\bar{j}_{k}}%
(\Phi_{\left(  i_{m+1}\right)  }P_{t-t_{m+1}}(\varphi))(x)d\bar{t}_{k-1}\\
+\int_{\Delta_{s,t_{m+1}}^{k}}b_{\left(  s,t_{1},\ldots,t_{k}\right)
}^{m,\bar{j}_{1},...,\bar{j}_{k}}\left(  Y\right)  \bar{R}_{\left(
s,t_{1},\ldots,t_{k},t_{m+1}\right)  }^{m,\bar{j}_{1},...,\bar{j}_{k}}%
(\Psi_{\left(  i_{m+1}\right)  }P_{t-t_{m+1}}(\varphi))(x)d\bar{t}_{k}
\label{e2a}%
\end{multline}
The first term in the expansion of $R_{s,t}^{2,m+1,\bar{\imath}\ast i_{m+1}%
}\left(  \varphi\right)  $ contributes to the second term in the expansion of
(\ref{inductionstep}). The identity (\ref{degree}) is also satisfied as each
of the terms $a_{\left(  s,t_{m+1}\right)  }^{m,\bar{j}_{1},...,\bar{j}_{k}%
}\left(  Y\right)  $ is replaced by
\[
\int_{s}^{t}a_{\left(  s,t_{m+1}\right)  }^{k,m,\bar{\imath}}\left(  Y\right)
dY_{r}^{i_{m+1}}%
\]
so the degree for each term increases by 1. Similarly, the second term in the
expansion of $R_{s,t}^{2,m+1,\bar{\imath}\ast i_{m+1}}\left(  \varphi\right)
$ contributes to the third term in the expansion of (\ref{inductionstep}),
whilst the identity (\ref{degree}) is also satisfied as each of the terms
$a_{\left(  s,r\right)  }^{m,\bar{j}_{1},...,\bar{j}_{k}}\left(  Y\right)  $
is replaced by
\[
\int_{s}^{t_{m+1}}a_{\left(  s,r\right)  }^{m,\bar{j}_{1},...,\bar{j}_{k}%
}\left(  Y\right)  dY_{r}^{i_{m+1}}%
\]
so, again, the degree for each term increases by 1. Similarly,%
\begin{multline}
R_{s,t}^{3,m+1,\bar{\imath}\ast i_{m+1}}\left(  \varphi\right)  =\label{eq3}\\
\sum_{k=1}^{m}\sum_{\bar{j}_{1},...,\bar{j}_{k};\,\bar{\imath}=\bar{j}_{1}%
\ast...\ast\bar{j}_{k}\,}\int_{s}^{t}\int\limits_{\Delta_{s,t_{m+1}}^{k}%
}c_{\left(  s,t_{1},\ldots,t_{k}\right)  }^{m,\bar{j}_{1},...,\bar{j}_{k}%
}\left(  Y\right)  \hat{R}_{\left(  s,t_{1},\ldots,t_{k},t_{m+1}\right)
}^{m,\bar{j}_{1},...,\bar{j}_{k}}(h_{i_{m+1}}P_{t-t_{m+1}}(\varphi
))(x)d\bar{t}_{k}dY_{t_{m+1}}^{i_{m+1}}%
\end{multline}%
\begin{multline*}
=\sum_{k=1}^{m}\sum_{\bar{j}_{1},...,\bar{j}_{k};\,\bar{\imath}=\bar{j}%
_{1}\ast...\ast\bar{j}_{k}\,}\int_{s}^{t}dY_{t_{m+1}}^{i_{m+1}}\int_{s}%
^{t}S_{s,t_{m+1}}^{3,m,\bar{j}_{1},...,\bar{j}_{k},i_{m+1}}\left(
\varphi\right)  dt_{m+1}\\
-\int_{s}^{t}\int_{s}^{t_{m+1}}dY_{r}^{i_{m+1}}S_{s,t_{m+1}}^{3,m,\bar{j}%
_{1},...,\bar{j}_{k},i_{m+1}}\left(  \varphi\right)  dt_{m+1}%
\end{multline*}
where%
\begin{align}
S_{s,t_{m+1}}^{3,m,\bar{j}_{1},...,\bar{j}_{k},i_{m+1}}\left(  \varphi\right)
&  =\frac{d}{dt_{m+1}}\int\limits_{\Delta_{s,t_{m+1}}^{k}}c_{\left(
s,t_{1},\ldots,t_{k}\right)  }^{m,\bar{j}_{1},...,\bar{j}_{k}}\left(
Y\right)  \hat{R}_{\left(  s,t_{1},\ldots,t_{k},t_{m+1}\right)  }^{m,\bar
{j}_{1},...,\bar{j}_{k}}(h_{i_{m+1}}P_{t-t_{m+1}}(\varphi))(x)d\bar{t}%
_{k}\nonumber\\
&  =\int\limits_{\Delta_{s,t_{m+1}}^{k-1}}c_{\left(  s,t_{1},\ldots
,t_{k}\right)  }^{m,\bar{j}_{1},...,\bar{j}_{k}}\left(  Y\right)  \hat
{R}_{\left(  s,t_{1},\ldots,t_{m+1},t_{m+1}\right)  }^{m,\bar{j}_{1}%
,...,\bar{j}_{k}}(\Phi_{\left(  i_{m+1}\right)  }P_{t-t_{m+1}}(\varphi
))(x)d\bar{t}_{k-1}\nonumber\\
&  +\int\limits_{\Delta_{s,t_{m+1}}^{k}}c_{\left(  s,t_{1},\ldots
,t_{k}\right)  }^{m,\bar{j}_{1},...,\bar{j}_{k}}\left(  Y\right)  \hat
{R}_{\left(  s,t_{1},\ldots,t_{k},t_{m+1}\right)  }^{m,\bar{j}_{1},...,\bar
{j}_{k}}(\Psi_{\left(  i_{m+1}\right)  }P_{t-t_{m+1}}(\varphi))(x)d\bar{t}_{k}
\label{eq4}%
\end{align}
The first term in the expansion of $R_{s,t}^{3,m+1,\bar{\imath}\ast i_{m+1}%
}\left(  \varphi\right)  $ contributes to the second term in the expansion of
(\ref{inductionstep}). The identity (\ref{degree}) is also satisfied as we add
$\int_{s}^{t}dY_{t_{m+1}}^{i_{m+1}}$ to each of the terms so the total degree
increases by 1. Similarly, the second term in the expansion of $R_{s,t}%
^{3,m+1,\bar{\imath}\ast i_{m+1}}\left(  \varphi\right)  $ contributes to the
third term in the expansion of (\ref{inductionstep}), whilst the identity
(\ref{degree}) is again satisfied as we add $\int_{s}^{t}dY_{t_{m+1}}%
^{i_{m+1}}$ to each term.

The results now follows from (\ref{eq5a}), (\ref{eq1}), (\ref{e2a}),
(\ref{eq3}) and (\ref{eq4}).\bigskip\newline
\end{proof}

\bigskip We will require a pathwise control of the iterated (Ito) integrals
$q_{s,t}^{\bar{\imath}}\left(  Y\right)  $\ of the Brownian motion$.$ It is
well known that the Ito lift of Brownian motion is a Holder controlled rough
path (see e.g. \cite{LCL} or \cite{FV}), which immediately implies the
following lemma.

\begin{lemma}
\label{iterated integral}For any $1/3<\gamma<1/2$ there exists a positive
random variable $c=c\left(  \omega,\gamma\right)  $ and some constant
$\theta>0$ such that, almost surely,
\[
\left\vert q_{s,t}^{\bar{\imath}}\left(  Y\right)  \right\vert \leq
\frac{\left(  c\left(  \omega,\gamma\right)  \left\vert s-t\right\vert
\right)  ^{k\gamma}}{\theta\left(  k\gamma\right)  !}%
\]
for all $0\leq s\leq t\leq1,$ $\bar{\imath}\in S\left(  k\right)  .$
\end{lemma}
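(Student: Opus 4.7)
The plan is to reduce the lemma to two well-known ingredients from rough path theory: (i) the almost sure H\"older regularity of the It\^o-lifted Brownian motion, and (ii) the factorial decay of iterated integrals of a H\"older rough path given by Lyons' Extension Theorem (equivalently, the neoclassical inequality).

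First I would fix $\gamma \in (1/3, 1/2)$ and recall that the It\^o enhancement $\mathbf{Y} = (Y, \mathbb{Y})$ of the $d_2$-dimensional Brownian motion $Y$, with $\mathbb{Y}_{s,t} = \int_s^t (Y_r - Y_s) \otimes dY_r$ (It\^o), is, $P$-almost surely, a $\gamma$-H\"older rough path. This is classical: Kolmogorov's continuity criterion applied to $Y_t - Y_s$ and to $\mathbb{Y}_{s,t}$ (using that the latter has moments of all orders and satisfies the correct scaling, $\mathbb{E}|\mathbb{Y}_{s,t}|^p \lesssim_p |t-s|^p$) yields the existence of a random variable
\[
N(\omega,\gamma) := \sup_{0 \leq s < t \leq 1} \frac{|Y_t - Y_s|}{|t-s|^\gamma} + \sup_{0 \leq s < t \leq 1} \frac{|\mathbb{Y}_{s,t}|^{1/2}}{|t-s|^\gamma},
\]
which is almost surely finite.

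Next I would invoke the Extension Theorem of Lyons (see e.g.\ \cite{L, LCL}), which asserts that a $\gamma$-H\"older rough path $\mathbf{Y}$ admits a unique multiplicative extension to all tensor levels $k \geq 1$, and the extension satisfies the factorial bound
\[
|\mathbf{Y}^{(k)}_{s,t}| \leq \frac{(C\, N(\omega,\gamma)\, |t-s|^\gamma)^{k}}{\beta(k\gamma)!}
\]
for every $s \leq t$ in $[0,1]$ and some universal constants $C, \beta > 0$ depending only on $\gamma$ (this is essentially the neoclassical inequality). Since each scalar iterated integral $q_{s,t}^{\bar{\imath}}(Y)$ with $\bar{\imath} \in S(k)$ is a coordinate of the level-$k$ component $\mathbf{Y}^{(k)}_{s,t}$, the previous estimate yields, for all $0 \le s \le t \le 1$ and all $\bar{\imath} \in S(k)$,
\[
|q_{s,t}^{\bar{\imath}}(Y)| \leq \frac{(c(\omega,\gamma)\, |t-s|)^{k\gamma}}{\theta\,(k\gamma)!}
\]
with $c(\omega,\gamma) := (C\, N(\omega,\gamma))^{1/\gamma}$ and $\theta := \beta$, which is the claim.

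The only non-routine point is the factorial denominator $(k\gamma)!$: this is exactly the content of the neoclassical inequality underlying the Extension Theorem, and is not obtainable by na\"ive moment estimates on iterated It\^o integrals (which would only give $k!$ in the denominator up to constants that blow up). Once one accepts the Extension Theorem, the pathwise construction of $N(\omega,\gamma)$ via Kolmogorov's criterion is standard. Note also that the bound is uniform in $\bar{\imath} \in S(k)$ because $|\mathbf{Y}^{(k)}_{s,t}|$ controls all coordinates simultaneously in the chosen tensor norm; a change of norm only affects the constants $C$ and $\beta$.
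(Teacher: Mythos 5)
Your proof is correct and follows essentially the same route the paper takes: the paper simply cites the fact that the Itô lift of Brownian motion is almost surely a Hölder rough path and invokes Lyons' Extension Theorem (whose operator-valued analogue the paper spells out as Lemma \ref{holder control}) to get the factorial decay of all higher iterated integrals. You have merely made explicit the two ingredients the paper leaves to the references, namely the Kolmogorov-criterion control of the first two levels and the neoclassical-inequality factorial bound on the extension.
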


\bigskip

It is important to note that the operators $\Phi$ that arise when we
recursively apply the integration by parts in the Proposition
\ref{ibp induction} only involve the vector fields $V_{i}$, $i=1,\ldots,d_{1}$
(but not the vector field $V_{0}$)\ and these vector fields do not change if
we consider the Ito or Stratonovich versions of the SDE\ defining the signal.

We have already seen that the $R_{s,t}^{m,\bar{\imath}}$ may be regarded as
bounded linear operators. The next two lemmas show us how to deduce regularity
estimates on $R_{s,t}^{m,\bar{\imath}}$ from regularity estimates on the
integral kernels $\bar{R}$ and $\hat{R}.$

\begin{lemma}
\label{help 1}With the notation of Lemma \ref{ibp induction}.\ Let $\left(
W,\left\Vert \cdot\right\Vert \right)  $ be a Banach space, $\bar{\imath}\in
S\left(  m\right)  $ and suppose $R_{s,t}^{m,\bar{\imath}}\in W.$ For any
$1/3<\gamma<1/2$ there exist random variables $c(\gamma,\omega)$ such that,
almost surely
\begin{equation}
\left\Vert R_{s,t}^{m,\bar{\imath}}\right\Vert \leq\left(  c\left(
\gamma,\omega\right)  \left\vert t-s\right\vert \right)  ^{m\gamma}\sum
_{k=1}^{m}\sum_{\bar{j}_{1},...,\bar{j}_{k};\,\bar{\imath}=\bar{j}_{1}%
\ast...\ast\bar{j}_{k}\,}\int_{\Delta_{s,t}^{k}}\left\Vert \bar{R}_{\left(
s,t_{1},\ldots,t_{k},t\right)  }^{m,\bar{j}_{1},...,\bar{j}_{k}}\right\Vert
+\left\Vert \hat{R}_{\left(  s,t_{1},\ldots,t_{k},t\right)  }^{m,\bar{j}%
_{1},...,\bar{j}_{k}}\right\Vert d\overline{t}_{k}. \label{inequality}%
\end{equation}

\end{lemma}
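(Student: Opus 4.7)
The starting point is the explicit pathwise representation of $R_{s,t}^{m,\bar\imath}$ provided by Proposition \ref{ibp induction}. Applying the triangle inequality in $W$ and pulling the norm inside the Riemann integrals reduces the task to showing that the scalar coefficients $q_{s,t}^{\bar\imath}(Y)$, $a_{(s,t)}^{m,\bar j_1,\ldots,\bar j_k}(Y)\,b_{(s,t_1,\ldots,t_k)}^{m,\bar j_1,\ldots,\bar j_k}(Y)$ and $c_{(s,t_1,\ldots,t_k)}^{m,\bar j_1,\ldots,\bar j_k}(Y)$ all admit a uniform (in $s\leq t_1<\cdots<t_k\leq t$) pointwise bound of the form $C\,(c(\gamma,\omega)|t-s|)^{m\gamma}$. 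Once this is in hand, the factor $(c(\gamma,\omega)|t-s|)^{m\gamma}$ can be pulled out of the sum, and what remains is exactly the sum of integrals of $\|\bar R\|$ and $\|\hat R\|$ over the simplices $\Delta_{s,t}^k$ appearing on the right-hand side of (\ref{inequality}), with the combinatorial constants absorbed into a redefined $c(\gamma,\omega)$.

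The scalar bound is a direct consequence of Lemma \ref{iterated integral} combined with the structural degree identity
\[
\deg\bigl(a_{(s,t)}^{m,\bar j_1,\ldots,\bar j_k}\bigr)+\deg\bigl(b_{(s,t_1,\ldots,t_k)}^{m,\bar j_1,\ldots,\bar j_k}\bigr)=\deg\bigl(c_{(s,t_1,\ldots,t_k)}^{m,\bar j_1,\ldots,\bar j_k}\bigr)=m
\]
recorded in Proposition \ref{ibp induction}. A generating element of $\Theta(n)$ is a product $\prod_i q_{s,t_i}^{\bar k_i}(Y)$ with $\sum_i|\bar k_i|=n$, so by Lemma \ref{iterated integral}
\[
\Bigl|\prod_i q_{s,t_i}^{\bar k_i}(Y)\Bigr|\leq\prod_i\frac{(c(\gamma,\omega)|t-s|)^{|\bar k_i|\gamma}}{\theta(|\bar k_i|\gamma)!}\leq C_{\mathrm{str}}\,(c(\gamma,\omega)|t-s|)^{n\gamma},
\]
and finite linear combinations are absorbed into the constant $C_{\mathrm{str}}$. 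Using $\deg(a)+\deg(b)=m$ for the middle sum, $\deg(c)=m$ for the third sum, and $\bar\imath\in S(m)$ for the boundary term $P_{t-s}(h_{i_1}\cdots h_{i_m}\cdot)\,q_{s,t}^{\bar\imath}(Y)$ yields the exponent $m\gamma$ uniformly in all three contributions.

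The main (and essentially only) obstacle is bookkeeping: the heat-semigroup boundary term $P_{t-s}(h_{i_1}\cdots h_{i_m}\varphi)\,q_{s,t}^{\bar\imath}(Y)$ does not \emph{a priori} appear in integral form on the right-hand side of (\ref{inequality}). One resolves this by identifying it with the value of $\bar R_{(s,t_1,t)}^{m,\bar\imath}$ at the simplex endpoint $t_1=t$ (under the convention implicit in Proposition \ref{ibp induction}, where $\bar R_{(s,t,t)}^{m,\bar\imath}(\varphi)=P_{t-s}(h_{i_1}\cdots h_{i_m}\varphi)$), or equivalently by enlarging the combinatorial constant attached to the $k=1$ summand so as to dominate the boundary contribution. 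Either choice preserves the structural estimate, and the inequality (\ref{inequality}) follows after a final redefinition of $c(\gamma,\omega)$ that absorbs the combinatorial counts produced by the finitely many decompositions $\bar\imath=\bar j_1\ast\cdots\ast\bar j_k$.
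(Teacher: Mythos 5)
Your proof follows the same route as the paper's (the paper compresses the whole argument into one line: ``combine Lemma~\ref{iterated integral} with Proposition~\ref{ibp induction}''), and the parts where you flesh out the scalar-coefficient bound are correct: for any product $\prod_i q_{s,t_i}^{\bar k_i}(Y)$ with $\sum_i|\bar k_i|=n$, Lemma~\ref{iterated integral} gives $\bigl|\prod_i q_{s,t_i}^{\bar k_i}(Y)\bigr|\le\prod_i\frac{(c(\gamma,\omega)|t-s|)^{|\bar k_i|\gamma}}{\theta(|\bar k_i|\gamma)!}$, and since there are at most $n$ factors each bounded by $(\theta\,\Gamma(1+\gamma))^{-1}$, the product of the reciprocal factorials is bounded geometrically in $n\le m$, so it can indeed be absorbed into a redefined $c(\gamma,\omega)$ raised to the power $m\gamma$. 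Combined with the degree identity (\ref{degree}), this gives the exponent $m\gamma$ on each of the three groups of terms in (\ref{complicatedform}), exactly as you argue.

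The weak point is your treatment of the boundary term $P_{t-s}(h_{i_1}\cdots h_{i_m}\varphi)\,q_{s,t}^{\bar\imath}(Y)$. You correctly notice that it is not of the integral form appearing on the right-hand side of (\ref{inequality}), but neither of your two proposed fixes actually works. Evaluating $\bar R_{(s,t_1,t)}^{m,\bar\imath}$ at the simplex endpoint $t_1=t$ does not reproduce the term: the right-hand side is a \emph{Lebesgue integral} over the open simplex $\Delta_{s,t}^{1}=(s,t)$, and a single boundary value contributes nothing to it. Enlarging the constant attached to the $k=1$ summand also fails because the two quantities have different small-time scaling: $\int_{s}^{t}\|\bar R_{(s,t_1,t)}\|\,dt_1=O(|t-s|)$ while $\|P_{t-s}(h_{i_1}\cdots h_{i_m}\varphi)\|\to\|h_{i_1}\cdots h_{i_m}\varphi\|\neq0$ as $t-s\to0$. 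A multiplicative constant, which is what $c(\gamma,\omega)$ provides, cannot bridge a gap in the $|t-s|$ power. (This same slip is present in the paper's terse proof, where the boundary term is silently dropped.) The clean repair is to add an explicit $k=0$ summand $\|P_{t-s}(h_{i_1}\cdots h_{i_m}\varphi)\|$ to the right-hand side of (\ref{inequality}); this is harmless for the downstream applications, since in Lemma~\ref{help 2} and Lemma~\ref{a priori estimate} the $k=0$ contribution is of the same order $|t-s|^{k_0}$ as the other terms and simply adds one more summand to the finite sum over $k$.
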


\begin{proof}
It follows immediately from combining the H\"{o}lder estimates for the
iterated integrals $q_{s,t}^{\bar{\imath}}\left(  Y\right)  $ obtained in
Lemma \ref{iterated integral} and Proposition \ref{ibp induction} that
\begin{equation}
\left\Vert R_{s,t}^{m,\bar{\imath}}\right\Vert \leq\left(  c\left(
\gamma,\omega\right)  \left\vert t-s\right\vert \right)  ^{m\gamma}\sum
_{k=1}^{m}\sum_{\bar{j}_{1},...,\bar{j}_{k};\,\bar{\imath}=\bar{j}_{1}%
\ast...\ast\bar{j}_{k}\,}\left\Vert \int_{\Delta_{s,t}^{k}}\bar{R}_{\left(
s,t_{1},\ldots,t_{k},t\right)  }^{m,\bar{j}_{1},...,\bar{j}_{k}}d\overline
{t}_{k}\right\Vert +\left\Vert \int_{\Delta_{s,t}^{k}}\hat{R}_{\left(
s,t_{1},\ldots,t_{k},t\right)  }^{m,\bar{j}_{1},...,\bar{j}_{k}}d\overline
{t}_{k}\right\Vert .
\end{equation}

\end{proof}

In the following lemma we assume that the integral kernels $\bar{R}$ and
$\hat{R}$ have bounds with integrable singularities. The control of the
constants in the lemma is actually stronger than we will later require.

\begin{lemma}
\label{help 2}\ Under the assumptions of Lemma \ref{help 1}. Let $\bar{\imath
}\in S\left(  m\right)  ,$ $m\geq1$. Suppose there exists a constant
\thinspace$c$ such that for all $\bar{j}_{1},\ldots,$ $\bar{j}_{k}\in S$
satisfying $\bar{\imath}=\bar{j}_{1}\ast...\ast\bar{j}_{k}.,$ $t_{0}%
=0<t_{1}<\cdots<t_{k}<t$ we have both
\begin{equation}
\left\Vert \bar{R}_{\left(  0,t_{1},\ldots,t_{k},t\right)  }^{m,\bar{j}%
_{1},...,\bar{j}_{k}}\right\Vert \leq ct^{k_{0}}\frac{1}{\sqrt{t_{1}-t_{0}}%
}\cdots\frac{1}{\sqrt{t_{k}-t_{k-1}}} \label{kernel bounds}%
\end{equation}
and%
\[
\left\Vert \hat{R}_{\left(  0,t_{1},\ldots,t_{k},t\right)  }^{m,\bar{j}%
_{1},...,\bar{j}_{k}}\right\Vert \leq ct^{k_{0}}\frac{1}{\sqrt{t_{1}-t_{0}}%
}\cdots\frac{1}{\sqrt{t_{k}-t_{k-1}}}%
\]
for some $k_{0}\in\mathbb{R}$. Then \
\begin{equation}
\int_{\Delta_{s,t}^{k}}\left\Vert \bar{R}_{\left(  s,t_{1},\ldots
,t_{k},t\right)  }^{m,\bar{j}_{1},...,\bar{j}_{k}}\right\Vert +\left\Vert
\hat{R}_{\left(  s,t_{1},\ldots,t_{k},t\right)  }^{m,\bar{j}_{1},...,\bar
{j}_{k}}\right\Vert d\overline{t}_{k}\leq a_{k}\left\vert t-s\right\vert
^{k/2+k_{0}} \label{lemma4}%
\end{equation}
where%
\[
a_{k}=\frac{4\left(  2\sqrt{\pi}\right)  ^{k}c}{k\Gamma\left(  \frac{k}%
{2}\right)  }.
\]

\end{lemma}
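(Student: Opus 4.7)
The plan is to reduce the estimate to a classical Dirichlet beta-integral computation. First I would combine the two pointwise kernel bounds into
\begin{equation*}
\|\bar R_{(s, t_1, \ldots, t_k, t)}^{m, \bar j_1, \ldots, \bar j_k}\| + \|\hat R_{(s, t_1, \ldots, t_k, t)}^{m, \bar j_1, \ldots, \bar j_k}\| \leq 2c\,|t-s|^{k_0}\prod_{i=1}^k (t_i - t_{i-1})^{-1/2},
\end{equation*}
where I would treat $t^{k_0}$ as $|t-s|^{k_0}$ by reading the hypothesis as formulated at $s = t_0 = 0$ and invoking time-homogeneity of the underlying diffusion to translate to a general starting time. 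This factors all $(s,t)$-dependence out of the integrand and reduces the proof to estimating the singular simplicial integral $\int_{\Delta_{s,t}^k} \prod_{i=1}^k (t_i - t_{i-1})^{-1/2}\, d\overline{t}_k$.

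Next I would evaluate that integral exactly via the affine change of variables $u_i := t_i - t_{i-1}$ (with unit Jacobian), which identifies $\Delta_{s,t}^k$ with $\{u \in (0,\infty)^k : u_1 + \cdots + u_k \leq t-s\}$. The classical Dirichlet formula then gives
\begin{equation*}
\int_{\{u_i > 0,\ \sum_i u_i \leq T\}} \prod_{i=1}^k u_i^{-1/2}\, du_1\cdots du_k = \frac{\Gamma(1/2)^k}{\Gamma(k/2 + 1)}\, T^{k/2} = \frac{\pi^{k/2}}{\Gamma(k/2+1)}\, T^{k/2},
\end{equation*}
so the simplicial integral equals $\pi^{k/2}(t-s)^{k/2}/\Gamma(k/2+1)$. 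Combining the two steps would yield
\begin{equation*}
\int_{\Delta_{s,t}^k} \bigl(\|\bar R\| + \|\hat R\|\bigr)\, d\overline{t}_k \leq \frac{2c\,\pi^{k/2}}{\Gamma(k/2+1)}\,|t-s|^{k/2 + k_0},
\end{equation*}
and using the identity $\Gamma(k/2+1) = (k/2)\,\Gamma(k/2)$ I would rewrite the prefactor as $\tfrac{4c\,\pi^{k/2}}{k\,\Gamma(k/2)}$, which is dominated by the stated $a_k = \tfrac{4(2\sqrt{\pi})^k c}{k\,\Gamma(k/2)}$; the extra factor $2^k$ simply makes the stated constant a convenient (if non-sharp) upper bound.

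The proof is essentially a routine calculation, so I do not expect any substantial obstacle. The only points that will require attention are (i) the notational mismatch between the $t^{k_0}$ in the hypothesis and the $|t-s|^{k_0}$ in the conclusion, to be handled by the time-homogeneity remark above, and (ii) keeping careful track of the gamma-function prefactors when comparing the computed sharp constant with the stated upper bound $a_k$.
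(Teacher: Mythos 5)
Your proof is correct and follows essentially the same route as the paper's: reduce to $s=0$ by translation-invariance of the kernels, then evaluate the singular simplex integral $\int_{\Delta_{0,T}^{k}}\prod_{i}(t_{i}-t_{i-1})^{-1/2}\,d\overline{t}_{k}$. The only difference is computational: you invoke the Dirichlet formula and obtain the exact value $\pi^{k/2}T^{k/2}/\Gamma(k/2+1)$, whereas the paper substitutes $a_{i}=uz_{i}^{2}$ (Jacobian $2^{k}u^{k}\prod z_{i}$) and then bounds the resulting positive-orthant region $\{z_{i}>0,\sum z_{i}^{2}\leq1\}$ by the volume of the full unit ball, which is exactly where the extra $2^{k}$ in the stated $a_{k}$ comes from; your sharper constant $4c\pi^{k/2}/(k\Gamma(k/2))$ is dominated by $a_{k}$, so your observation about the slack is accurate.
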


\begin{proof}
First observe that
\[
\underset{k\,\,\ \mathrm{times}}{\underbrace{\int_{s}^{t}\int_{s}^{t_{k}%
}\ldots\int_{s}^{t_{2}}}}\bar{R}_{\left(  s,t_{1},\ldots,t_{k},t\right)
}^{m,\bar{j}_{1},...,\bar{j}_{k}}(\varphi)dt_{1}...dt_{k}%
=\underset{k\,\,\ \mathrm{times}}{\underbrace{\int_{0}^{t-s}\int_{0}^{t_{k}%
}\ldots\int_{0}^{t_{2}}}}\bar{R}_{\left(  0,t_{1},\ldots,t_{k},t-s\right)
}^{m,\bar{j}_{1},...,\bar{j}_{k}}(\varphi)dt_{1}...dt_{k}.
\]
Hence, it is sufficient to prove the result for $s=0.$ Writing $t-s=u$ let
$\Lambda_{u}$ be the simplex
\[
\Lambda_{u}=\left\{  \left(  a_{1},...,a_{k}\right)  \in\mathbb{R}_{+}%
^{k}|\sum_{i=1}^{k}a_{i}\leq u\right\}  .
\]
We have
\[
\left\Vert P_{a_{1}}\left(  \bar{\Phi}_{1}P_{a_{2}}\ldots\bar{\Phi}%
_{k-1}P_{a_{k}}\left(  \bar{\Phi}_{k}P_{u-\left(  \sum_{j=1}^{k}a_{j}\right)
}\right)  \right)  \right\Vert \leq c_{k}u^{k_{0}}\frac{1}{\sqrt{a_{1}}}%
\frac{1}{\sqrt{a_{2}}}\cdots\frac{1}{\sqrt{a_{k}}}%
\]
and introduce the change of variable $a_{i}=uz_{i}^{2},\,\,\,\,\,\ i=1,...,k$
with the determinant of its Jacobian being $2^{k}u^{k}z_{1}z_{2}...z_{k}.$
Then%
\begin{align*}
&  \int_{\Lambda_{t}^{k+1}}\left\Vert P_{a_{1}}\left(  \bar{\Phi}_{1}P_{a_{2}%
}\ldots\bar{\Phi}_{k-1}P_{a_{k}}\left(  \bar{\Phi}_{k}P_{u-\left(  \sum
_{j=1}^{k}a_{j}\right)  }\right)  \right)  \right\Vert da\\
&  \leq c_{k}2^{k}u^{\frac{k}{2}+k_{0}}l\left(  \Lambda_{1}^{k+1}\right)  ,
\end{align*}
where
\[
\Lambda_{1}^{k+1}\subset\left\{  \left(  z_{1},...,z_{k}\right)  \in
\mathbb{R}_{+}^{k}|\sum_{i=1}^{k}z_{i}^{2}\leq1\right\}
\]
In other words $\Lambda_{1}^{k+1}$ is a subset of the unit hypersphere hence
its volume less the volume of the sphere so
\[
l\left(  \Lambda_{1}^{k+1}\right)  \leq\frac{2\pi^{\frac{k}{2}}}%
{k\Gamma\left(  \frac{k}{2}\right)  }.
\]
A similar argument using $\hat{R}_{\left(  s,t_{1},\ldots,t_{k},t\right)
}^{m,\bar{j}_{1},...,\bar{j}_{k}}$ in place of $\bar{R}_{\left(
s,t_{1},\ldots,t_{k},t\right)  }^{m,\bar{j}_{1},...,\bar{j}_{k}}$ completes
the proof.
\end{proof}

\section{\bigskip Kusuoka-Stroock regularity estimates for the integral
kernels\label{section integral kernels}}

The aim of this section is to derive regularity estimates for the integral
kernels $\hat{R}$ and $\bar{R}$ that arise in the pathwise representation of
the expansion of the unnormalised conditional density. The use of these bounds
is twofold. First, they will allow us to control directly the lower order
terms in the expansion derived in section \ref{density expansion} and second
provide us via Lemma \ref{help 1} with bounds on the operator norms of the
operators $R^{m,\bar{\imath}}$ acting on the spaces $H^{1}$ and $H^{-1}$ respectively.

Recall that $\Delta_{s,t}^{k}$ denotes the simplex defined by the relation
$s<t_{1}<\cdots<t_{k}<t.$ In a first step we would like to obtain estimates
for the kernels of the form $\left(  \ref{kernel bounds}\right)  $ that are
(essentially) uniform across the simplex over which we are integrating. The
basic idea is that for any $\left(  t_{1},\ldots,t_{k}\right)  \in\Delta
_{0,t}^{k}$ there exists always at least one time interval $\left[
t_{j,}t_{j-1}\right]  $ that is of length at least $t/\left(  k+1\right)  $.
We then use the Kusuoka-Strock regularity estimates (Theorem
\ref{kusuoka-stroock}) to deduce smoothness of the heat semigroup over this
particular interval. The proof of Theorem \ref{kusuoka-stroock} employs the
methods of Malliavin calculus. As we will in the following draw on elements of
their method we recall some basic concepts of the Malliavin calculus.

\bigskip

Let $\left(  \Theta,\mathcal{H},\mu\right)  $ be the abstract Wiener space and
let $\mathcal{L}$ denote the Ornstein Uhlenbeck operator defined as in Kusuoka
\cite{kusuoka-jfa}. Denote by $G\left(  \mathcal{L}\right)  $ the set of
arbitrarily often Malliavin differentiable real valued random variables on
$\Theta$ and denote by $D_{p}^{s}$, the usual Kusuoka-Stroock Sobolev spaces
based on the Ornstein-Uhlenbeck operator (see e.g. Kusuoka \cite{kusuoka-jfa}
or \cite{kusuoka} for details ). The following definition is take from Kusuoka
\cite{kusuoka}, p.267.

\begin{definition}
Let $r\in\mathbb{R}$ and $K_{r}$ denote the set of functions $f:(0,1]\times
\mathbb{R}^{N}\rightarrow G\left(  \mathcal{L}\right)  $ satisfying the
following conditions

\begin{enumerate}
\item $f\left(  t,x\right)  $ is smooth in $x$ and $\frac{\partial^{\nu}%
f}{\partial^{\nu}x}$ is continuous in $\left(  t,x\right)  \in(0,1]\times
\mathbb{R}^{N}$ with probability one for any multi-index $\nu$

\item
\[
\sup_{t\in(0,1],x\in R^{N}}t^{-r/2}\left\Vert \frac{\partial^{\nu}f}%
{\partial^{\nu}x}\left(  t,x\right)  \right\Vert _{D_{p}^{s}}<\infty
\]
for any $s\in\mathbb{R},$ $p\in\left(  1,\infty\right)  .$
\end{enumerate}
\end{definition}

For $\Phi\in\mathcal{K}_{r}$ , $\varphi\in C_{b}^{\infty}$ define $P_{t}%
^{\Phi}\varphi=E\left(  \Phi\left(  t,x\right)  \varphi\left(  X_{t}\left(
x\right)  \right)  \right)  .$ An important ingredient in the proof of Theorem
\ref{kusuoka-stroock} which we will use repeatedly is the following Lemma
(Kusuoka \cite{kusuoka} Corollary 9).

\begin{lemma}
[Kusuoka]\label{kusuoka-lemma}Let $r\in\mathbb{R}$, $\Phi\in\mathcal{K}_{r}$
and $\alpha\in A_{1}\left(  \ell\right)  $ . Then there are $\Phi_{\alpha,1}$
, $\Phi_{\alpha,2}\in\mathcal{K}_{r-\left\Vert \alpha\right\Vert }$ such that%
\begin{equation}
P_{t}^{\Phi}V_{\left[  \alpha\right]  }=P_{t}^{\Phi_{\alpha,1}}\text{ and
}V_{\left[  \alpha\right]  }P_{t}^{\Phi}=P_{t}^{\Phi_{\alpha,2}}.
\end{equation}
Moreover there exists $C$ such that
\[
\left\Vert P_{t}^{\Phi}\varphi\right\Vert _{\infty}\leq t^{r/2}\left\Vert
\varphi\right\Vert _{\infty}%
\]
for any $\varphi\in C_{b}^{\infty}\left(  \mathbb{R}^{N}\right)  $ and
$t\in(0,1].$
\end{lemma}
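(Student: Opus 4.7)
The plan is to follow the Malliavin-calculus strategy that underlies Kusuoka's Corollary 9. The key mechanism is that a vector-field derivative $V_{[\alpha]}$, whether applied externally to $P_t^\Phi$ or internally to $\varphi$, can be traded via integration by parts on Wiener space for a Malliavin-smooth multiplicative weight, at the sharp cost of a factor of size $t^{-\|\alpha\|/2}$; the $L^\infty$ bound will then be a direct consequence of the defining estimate of $\mathcal{K}_r$.

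First I would record the standard Malliavin-calculus facts: the Jacobian $J_t$ of the flow $x \mapsto X_t^x$ and its inverse $J_t^{-1}$ belong to every $D_p^s$, so that any smooth functional of $X^x$ built out of $J_t$, $J_t^{-1}$, and Stratonovich integrals lies in $\bigcap_{s,p} D_p^s$, with Sobolev norms depending smoothly on $x$. For the relation $V_{[\alpha]} P_t^\Phi \varphi = P_t^{\Phi_{\alpha,2}}$, I differentiate under the expectation: one contribution has $V_{[\alpha]}$ hitting $\Phi(t,\cdot)$ and stays manifestly in $\mathcal{K}_r$, while the other involves the directional derivative of $\varphi(X_t^x)$ along $J_t V_{[\alpha]}(x)$. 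Using the UFG expansion $V_{[\alpha]} = \sum_{\beta \in A_1(\ell)} u_{\alpha,\beta} V_{[\beta]}$, I would rewrite this second term as a combination of $V_{[\beta]}\varphi(X_t^x)$ with Malliavin-smooth coefficients, and then integrate each summand by parts against the \emph{reduced} Malliavin covariance matrix associated with the directions $\{V_{[\beta]}\}_{\beta \in A_1(\ell)}$. Kusuoka's sharp non-degeneracy estimate under UFG bounds the inverse of this reduced covariance by $t^{-\|\beta\|/2}$ in every $L^p$, which is precisely what produces the $t^{-\|\alpha\|/2}$ loss in the regularity index.

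For the dual identity $P_t^\Phi V_{[\alpha]}\varphi = P_t^{\Phi_{\alpha,1}}$ I would apply the UFG expansion directly to $V_{[\alpha]}\varphi$ and rewrite $\mathbb{E}[\Phi(t,x)\, V_{[\alpha]}\varphi(X_t^x)]$ as $\sum_\beta \mathbb{E}[\Phi(t,x)\, u_{\alpha,\beta}(X_t^x)\, V_{[\beta]}\varphi(X_t^x)]$; the same integration-by-parts argument then converts each $V_{[\beta]}\varphi(X_t^x)$ into a multiplicative weight, and the various Leibniz products together with the reduced-covariance-inverse factors assemble into the new weight $\Phi_{\alpha,1}$. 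The two defining conditions for $\mathcal{K}_{r-\|\alpha\|}$ — pathwise smoothness in $x$ and the $t^{(r-\|\alpha\|)/2}$-scaling of every Malliavin Sobolev norm — both follow by construction, the former from the smoothness of all the ingredients and the latter by collecting the $t^{r/2}$-factor from $\Phi \in \mathcal{K}_r$ with the $t^{-\|\alpha\|/2}$ loss from the non-degeneracy estimate.

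Finally, the $L^\infty$ estimate is immediate: by H\"older and the embedding $D_2^0 \hookrightarrow L^2(\Theta,\mu)$, one has $|P_t^\Phi\varphi(x)| \le \|\Phi(t,x)\|_{L^1(\mu)}\|\varphi\|_\infty \le \|\Phi(t,x)\|_{D_2^0}\|\varphi\|_\infty \le C t^{r/2}\|\varphi\|_\infty$ uniformly in $x$, by the definition of $\mathcal{K}_r$. The main obstacle is not the structural manipulation but the quantitative step: establishing the sharp $t^{-\|\beta\|/2}$ bound on the inverse of the reduced Malliavin covariance matrix in the UFG directions, and then propagating this bound through each Leibniz-plus-integration-by-parts step so that the assembled weight really lies in $\mathcal{K}_{r-\|\alpha\|}$. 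Once this non-degeneracy estimate is in hand, the rest of the bookkeeping is mechanical; without it, no quantitative small-time control of $\Phi_{\alpha,i}$ would be possible.
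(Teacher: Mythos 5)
The paper does not prove this lemma: it is quoted verbatim as Kusuoka \cite{kusuoka}, Corollary~9, and used as a black box, so there is no in-paper proof to compare your attempt against. Your overall plan --- differentiate under the expectation, trade a directional derivative of $\varphi(X_t^x)$ for a Malliavin-smooth weight by integration by parts against a reduced covariance matrix that is non-degenerate at rate $t^{-\|\beta\|/2}$ under UFG, and then get the $L^\infty$ bound from $|P_t^\Phi\varphi(x)|\le \|\Phi(t,x)\|_{L^p(\mu)}\|\varphi\|_\infty$ together with the defining estimate of $\mathcal{K}_r$ --- is indeed the route Kusuoka follows, and you have correctly isolated the sharp non-degeneracy estimate as the technical core; the $L^\infty$ step is clean and complete.

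One ingredient is misstated, though. You invoke the UFG expansion $V_{[\alpha]}=\sum_{\beta\in A_1(\ell)}u_{\alpha,\beta}V_{[\beta]}$ to convert the directional derivative along $J_t V_{[\alpha]}(x)$ into derivatives $V_{[\beta]}\varphi(X_t^x)$ with Malliavin-smooth coefficients. Since $\alpha\in A_1(\ell)$ is already in the spanning set, this $C_b^\infty$-module expansion is the identity and does nothing; more to the point, it has fixed (non-random, time-independent) coefficients, so it cannot by itself produce the Malliavin-smooth weights you need. The identity that actually does the work is the flow pushforward/pullback relation recorded in the paper as Lemma~\ref{kusuoka lemma 2}(3), namely $V_{[\alpha]}(x)=\sum_{\beta}b_\alpha^\beta(t,x)\,(X_t^{-1})_*V_{[\beta]}(x)$ with $b_\alpha^\beta\in\mathcal{K}_{(\|\beta\|-\|\alpha\|)\vee 0}$, whose pushforward by $X_t$ rewrites $(X_t)_*V_{[\alpha]}$ as a genuinely time-dependent, Wiener-functional-weighted combination of the $V_{[\beta]}$ evaluated at $X_t^x$. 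The UFG hypothesis enters in establishing that identity (it is what closes the Lie-bracket hierarchy when one differentiates $(X_t^{-1})_*V_{[\alpha]}$ in $t$), not as a direct substitution for $V_{[\alpha]}$. With that replacement your sketch matches Kusuoka's argument, with the remaining gap being precisely the non-degeneracy estimate you flagged.
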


Before we proceed we gather some simple properties of the spaces
$\mathcal{K}_{r}.$ The following Lemma may be found in Kusuoka \cite{kusuoka}
(Lemma 7).

\begin{lemma}
\label{kusuoka lemma 2}Let \thinspace$r_{1},r_{2}\in\mathbb{R}.$ Then

\begin{enumerate}
\item If $f_{1}\in\mathcal{K}_{r_{1`}}$ and $f_{2}\in\mathcal{K}_{r_{2}}$ then
$f_{1}f_{2}\in\mathcal{K}_{r_{1}+r_{2}}$

\item If $\varphi\in C_{b}^{\infty}\left(  \mathbb{R}^{N}\right)  $ then
$\varphi\left(  X_{t}\left(  x\right)  \right)  \in\mathcal{K}_{0}$

\item For any $\alpha,\beta$ $\in A_{1}\left(  \ell\right)  $ there exist
$a_{\alpha}^{\beta},$ $b_{\alpha}^{\beta}\in\mathcal{K}_{\left(  \left\Vert
\beta\right\Vert -\left\Vert \alpha\right\Vert \vee0\right)  }$ such that
\[
\left(  \left(  X_{t}^{-1}\right)  _{\ast}V_{\left[  \alpha\right]  }\right)
\left(  x\right)  =\sum_{\beta\in A_{1}\left(  \ell\right)  }a_{\alpha}%
^{\beta}\left(  t,x\right)  V_{\left[  \beta\right]  }\left(  x\right)
\]
and%
\[
V_{\left[  \alpha\right]  }\left(  x\right)  =\sum_{\beta\in A_{1}\left(
\ell\right)  }b_{\alpha}^{\beta}\left(  t,x\right)  \left(  \left(  X_{t}%
^{-1}\right)  _{\ast}V_{\left[  \beta\right]  }\right)  \left(  x\right)  .
\]

\end{enumerate}
\end{lemma}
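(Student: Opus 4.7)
All three statements are bookkeeping results about how the time weight $t^{r/2}$ encoded in the definition of $\mathcal{K}_r$ behaves under natural operations on Malliavin Sobolev spaces. For Part 1, one differentiates $f_1 f_2$ in $x$ by Leibniz's rule, estimates each product $(\partial^\mu f_1)(\partial^{\nu-\mu} f_2)$ in $D_p^s$ via the bilinear inequality $\|gh\|_{D_p^s} \lesssim \|g\|_{D_{2p}^s}\|h\|_{D_{2p}^s}$ (a consequence of Meyer's equivalence for non-negative integer $s$, extended to general $s$ by interpolation), and multiplies the weights $t^{r_1/2} \cdot t^{r_2/2}$ to recover $t^{(r_1+r_2)/2}$. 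For Part 2 one applies the Fa\`a di Bruno chain rule to $\varphi(X_t(x))$: since $\varphi \in C_b^\infty$ has all derivatives bounded, and since the Stratonovich flow of $C_b^\infty$ vector fields has $x$-derivatives and Malliavin derivatives bounded in every $D_p^s$-norm uniformly on $(0,1]$ (classical Kunita-Malliavin regularity), the resulting products carry no negative power of $t$ and the function lies in $\mathcal{K}_0$.

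The substantive content is Part 3, and here I expect the principal difficulty. Writing $J_t = DX_t$, an application of Stratonovich calculus to the SDE for $J_t^{-1}$ gives the variational identity
\[
d\bigl(J_t^{-1} V_i(X_t)\bigr) = J_t^{-1}[V_0, V_i](X_t)\,dt + \sum_{j=1}^{d_1} J_t^{-1}[V_j, V_i](X_t) \circ dB_t^j,
\]
and iterating this Picard-type expansion yields $(X_t^{-1})_* V_{[\alpha]}(x)$ as a formal series of higher Lie brackets $V_{[\alpha * \gamma]}(x)$, each multiplied by an iterated Stratonovich integral indexed by $\gamma$. A length-$\gamma$ iterated integral belongs to $\mathcal{K}_{\|\gamma\|/2}$ (by Part 1 applied inductively, noting that time integrals contribute $\mathcal{K}_1$ and single Stratonovich integrals contribute $\mathcal{K}_{1/2}$, consistent with the UFG degree $\|\gamma\| = \mathrm{length}(\gamma) + \mathrm{card}\{j:\gamma_j=0\}$). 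The UFG hypothesis now rewrites every higher bracket $V_{[\alpha * \gamma]}$ as a $C_b^\infty$-linear combination of the finite family $\{V_{[\beta]} : \beta \in A_1(\ell)\}$; collecting terms by $\beta$ produces the first identity with $a_\alpha^\beta$ receiving sharp homogeneity $(\|\beta\|-\|\alpha\|) \vee 0$, reflecting that the minimal number of Lie brackets needed to reach $V_{[\beta]}$ from $V_{[\alpha]}$ contributes exactly that amount of $t$-regularity, while the $\vee 0$ accounts for the leading term $V_{[\alpha]}$ when $\beta = \alpha$.

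The second identity is obtained by inverting the matrix $A(t,x) = (a_\alpha^\beta(t,x))_{\alpha,\beta \in A_1(\ell)}$. Since $A(0,x) = I$, a Neumann series provides $A^{-1}$ for small $t$, and the bounds extend to all $t \in (0,1]$ by the uniformity built into the $\mathcal{K}$-classes. The main obstacle is not the inversion itself but the bookkeeping required to confirm that each entry of $A^{-1}$ inherits the correct weight $(\|\beta\|-\|\alpha\|) \vee 0$. This follows from repeated application of Part 1 together with the observation that the off-diagonal entries of $A - I$ already carry strictly positive $\mathcal{K}$-weight, so successive powers vanish faster at $t = 0$ and the Neumann series converges term-by-term within the target $\mathcal{K}$-class. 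Tracking the degrees carefully through this algebraic procedure, rather than the analytic estimates themselves, constitutes the core difficulty of the proof.
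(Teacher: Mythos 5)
Your argument for Part~1 coincides with the paper's: both rely on the fact that $\bigcap_{1<p<\infty}D_p^s$ is an algebra together with a H\"{o}lder-type inequality $\|fg\|_{D_p^s}\le\|f\|_{D_r^s}\|g\|_{D_q^s}$, $\tfrac1p=\tfrac1r+\tfrac1q$, applied after the Leibniz rule (your $\|gh\|_{D_p^s}\lesssim\|g\|_{D_{2p}^s}\|h\|_{D_{2p}^s}$ is the special case $r=q=2p$). For Parts~2 and~3, however, the paper does not supply a proof at all --- it refers the reader to Kusuoka, \emph{Malliavin calculus revisited}, Lemma~7 --- whereas you attempt a full reconstruction. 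Your outline of Part~2 (Fa\`a di Bruno plus flow regularity) is a reasonable sketch, and your outline of Part~3 (variational SDE for $J_t^{-1}V_{[\alpha]}(X_t)$, Picard expansion, then the UFG condition to collapse high brackets onto $A_1(\ell)$) follows the standard Kusuoka mechanism.

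Two points in the Part~3 sketch are shaky. First, your degree bookkeeping for iterated integrals is off by a factor of~$2$: in the definition of $\mathcal{K}_r$ one weights by $t^{-r/2}$, so a single Stratonovich increment ($\sim t^{1/2}$) lies in $\mathcal{K}_1$, not $\mathcal{K}_{1/2}$, a time integral ($\sim t$) lies in $\mathcal{K}_2$, and the iterated integral indexed by $\gamma$ lies in $\mathcal{K}_{\|\gamma\|}$, not $\mathcal{K}_{\|\gamma\|/2}$. With $\mathcal{K}_{\|\gamma\|/2}$ you would actually land $a_\alpha^\beta$ in $\mathcal{K}_{((\|\beta\|-\|\alpha\|)/2)\vee0}$, which is not what the lemma asserts; your stated conclusion $(\|\beta\|-\|\alpha\|)\vee0$ is the correct one, but it is not what your intermediate accounting produces. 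Second, the inversion step for $b_\alpha^\beta$ is under-justified: the Neumann series for $A^{-1}$ only converges while $\|A(t,x)-I\|$ stays below~$1$, which the $\mathcal{K}$-bounds guarantee only for $t$ small (uniformly in $x$ and a.s.). Saying the bounds ``extend to all $t\in(0,1]$ by the uniformity built into the $\mathcal{K}$-classes'' is not an argument --- invertibility of $A(t,x)$ on all of $(0,1]$ is a genuine statement requiring either the cocycle property of the flow (patching the small-time inverses) or a direct SDE/Gronwall estimate for $b_\alpha^\beta$, and the degree propagation through that argument should then be re-verified. As it stands your sketch leaves the second representation (and hence one half of Part~3) without a complete proof.
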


\begin{proof}
The claims (2) and (3)\ are shown in \cite{kusuoka} (Lemma 7). For (1) note
that the space $\bigcap_{1<p<\infty}D_{p}^{s}\left(  \mathbb{R}\right)  $ is
an algebra (Kusuoka \cite{kusuoka-jfa} Lemma 2.13) and $\left\Vert
fg\right\Vert _{D_{p}^{s}}\leq\left\Vert f\right\Vert _{D_{r}^{s}}\left\Vert
g\right\Vert _{D_{q}^{s}}$ for $\frac{1}{p}=\frac{1}{r}+\frac{1}{q}$ , and any
$f,g\in\bigcap_{1<p<\infty}D_{p}^{s}$ and
\begin{align*}
&  \sup_{t\in(0,1],x\in R^{N}}t^{-\left(  r_{1}+r_{2}\right)  /2}\left\Vert
\frac{\partial\left(  f_{1}f_{2}\right)  }{\partial x}\left(  t,x\right)
\right\Vert _{D_{p}^{s}}\\
&  \leq\sum_{1\leq i,j\leq2,i\neq j}\sup_{t\in(0,1],x\in R^{N}}t^{-r_{i}%
/2}\left\Vert \frac{\partial f_{i}}{\partial x}\left(  t,x\right)  \right\Vert
_{D_{r}^{s}}\sup_{t\in(0,1],x\in R^{N}}t^{-r_{j}/2}\left\Vert f_{j}\left(
t,x\right)  \right\Vert _{D_{q}^{s}}<\infty
\end{align*}
The generalisation to higher derivatives is clear and the claim follows.
\end{proof}

In particular the Lemma implies that for any multi-index $\gamma,$
$p\in\lbrack1,\infty)$ and $T>0$%
\[
\sup_{x\in R^{N}}E\left[  \sup_{t\in\left[  0,T\right]  }\left\vert
\frac{\partial_{{}}^{|\gamma|}}{\partial x^{\gamma}}a_{\alpha}^{\beta}\left(
t,x\right)  \right\vert ^{p}\right]  <\infty
\]
and%
\[
\sup_{x\in R^{N}}E\left[  \sup_{t\in\left[  0,T\right]  }\left\vert
\frac{\partial^{|\gamma|}}{\partial x^{\gamma}}b_{\alpha}^{\beta}\left(
t,x\right)  \right\vert ^{p}\right]  <\infty.
\]

Let $J_{t}^{ij}\left(  x\right)  =\frac{\partial}{\partial x_{i}}X^{j}\left(
t,x\right)  $ and note that for any $C_{b}^{\infty}$ vector field $W$ we have
\[
\left(  \left(  X_{t}\right)  _{\ast}W\right)  ^{i}\left(  X_{t}\left(
x\right)  \right)  =\sum_{j=1}^{N}J_{t}^{ij}\left(  x\right)  W^{j},
\]
Suppose $\Phi\in\mathcal{K}_{r}.$ Then%
\[
V_{\left[  \alpha\right]  }P_{t}^{\Phi}\varphi\left(  x\right)  =E\left[
V_{\left[  \alpha\right]  }\Phi\varphi\left(  X_{t}\left(  x\right)  \right)
+\sum_{i,j=1}^{N}\Phi V_{\left[  \alpha\right]  }^{j}\left(  x\right)  \left(
\frac{\partial}{\partial x^{j}}\varphi\right)  \left(  X_{t}\left(  x\right)
\right)  J_{t}^{ij}\left(  x\right)  \right]
\]
It is straightforward to see that $V_{\left[  \alpha\right]  }\Phi
\in\mathcal{K}_{r}$ and for the second term in the sum we have
\begin{align*}
&  E\left[  \sum_{i,j=1}^{N}\Phi V_{\left[  \alpha\right]  }^{j}\left(
x\right)  \left(  \frac{\partial}{\partial x^{i}}\varphi\right)  \left(
X_{t}\left(  x\right)  \right)  J_{t}^{ij}\left(  x\right)  \right] \\
&  =E\left[  \sum_{i,j=1}^{N}\Phi\sum_{\beta\in A_{1}\left(  \ell\right)
}b_{\alpha}^{\beta}\left(  t,x\right)  \left(  \left(  X_{t}^{-1}\right)
_{\ast}V_{\left[  \beta\right]  }\right)  ^{j}\left(  x\right)  \left(
\frac{\partial}{\partial x^{i}}\varphi\right)  \left(  X_{t}\left(  x\right)
\right)  J_{t}^{ij}\left(  x\right)  \right] \\
&  =E\left[  \Phi\sum_{\beta\in A_{1}\left(  \ell\right)  }b_{\alpha}^{\beta
}\left(  t,x\right)  \sum_{i=1}^{N}\left(  \left(  X_{t}\right)  _{\ast
}\left(  X_{t}^{-1}\right)  _{\ast}V_{\left[  \beta\right]  }\right)
^{i}\left(  X_{t}\left(  x\right)  \right)  \left(  \frac{\partial}{\partial
x^{i}}\varphi\right)  \left(  X_{t}\left(  x\right)  \right)  \right] \\
&  =E\left[  \Phi\sum_{\beta\in A_{1}\left(  \ell\right)  }b_{\alpha}^{\beta
}\left(  t,x\right)  \sum_{i=1}^{N}V_{\left[  \beta\right]  }^{i}\left(
X_{t}\left(  x\right)  \right)  \left(  \frac{\partial}{\partial x^{i}}%
\varphi\right)  \left(  X_{t}\left(  x\right)  \right)  \right] \\
&  =\sum_{\beta\in A_{1}\left(  \ell\right)  }P^{\Phi b_{\alpha}^{\beta}%
}\left(  V_{\left[  \beta\right]  }\varphi\right)  \left(  x\right)
\end{align*}

Note that by Lemma \ref{kusuoka lemma 2} $\Phi b_{\alpha}^{\beta}\left(
t,x\right)  \in\mathcal{K}_{\left(  \left\Vert \beta\right\Vert -\left\Vert
\alpha\right\Vert \vee0\right)  +r}.$ We have just proved the following Lemma
(see e.g. Kusuoka \cite{kusuoka} Corollary 9).

\begin{lemma}
\label{forward first step}Let $\Phi\in\mathcal{K}_{r}$ and $\alpha\in
A_{1}\left(  \ell\right)  $ then $V_{\left[  \alpha\right]  }\Phi
\in\mathcal{K}_{r}$ and there exist $\Phi b_{\alpha}^{\beta}\in\mathcal{K}%
_{\left(  \left\Vert \beta\right\Vert -\left\Vert \alpha\right\Vert
\vee0\right)  +r} $ such that we have
\[
V_{\left[  \alpha\right]  }P_{t}^{\Phi}\varphi\left(  x\right)  =P^{V_{\left[
\alpha\right]  }\Phi}\varphi\left(  x\right)  +\sum_{\beta\in A_{1}\left(
\ell\right)  }P^{\Phi b_{\alpha}^{\beta}}\left(  V_{\left[  \beta\right]
}\varphi\right)  \left(  x\right)  ,
\]
for all $\varphi\in C_{b}^{\infty}\left(  \mathbb{R}^{N}\right)  .$
\end{lemma}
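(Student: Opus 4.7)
The plan is to differentiate $P_t^\Phi\varphi(x) = E[\Phi(t,x)\varphi(X_t(x))]$ directly in $x$ along $V_{[\alpha]}$ and then reorganise the resulting expression into the two claimed types of terms. By the product rule, $V_{[\alpha]}P_t^\Phi\varphi(x)$ splits as a term in which the differentiation hits $\Phi$ and a term in which it hits $\varphi(X_t(x))$. The first term is immediately of the form $P_t^{V_{[\alpha]}\Phi}\varphi(x)$, and since differentiation in $x$ preserves the $\mathcal{K}_r$ class (both conditions in the definition are manifestly stable under spatial derivatives), $V_{[\alpha]}\Phi \in \mathcal{K}_r$.

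For the second term I would expand in coordinates, writing $V_{[\alpha]}\varphi(X_t(x)) = \sum_{i,j}V_{[\alpha]}^j(x)(\partial_i\varphi)(X_t(x))J_t^{ij}(x)$, where $J_t^{ij}(x)=\partial_{x_i}X_t^j(x)$ is the Jacobian of the stochastic flow. Next I would apply part (3) of Lemma~\ref{kusuoka lemma 2} to re-express $V_{[\alpha]}(x)$ as $\sum_{\beta\in A_1(\ell)} b_\alpha^\beta(t,x)\bigl((X_t^{-1})_* V_{[\beta]}\bigr)(x)$, with $b_\alpha^\beta \in \mathcal{K}_{(\|\beta\|-\|\alpha\|)\vee 0}$. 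Pushing forward by $(X_t)_*$ cancels $(X_t^{-1})_*$ and converts the sum $\sum_i J_t^{ij}(\partial_i\varphi)(X_t(x))$ precisely into $V_{[\beta]}\varphi$ evaluated at $X_t(x)$. After taking expectation, the contribution of each $\beta$ becomes $P_t^{\Phi b_\alpha^\beta}(V_{[\beta]}\varphi)(x)$, matching the claimed expression.

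It remains to verify the regularity assertion $\Phi b_\alpha^\beta \in \mathcal{K}_{r+(\|\beta\|-\|\alpha\|)\vee 0}$. This is an immediate consequence of part (1) of Lemma~\ref{kusuoka lemma 2}, which says the $\mathcal{K}_r$ classes behave multiplicatively with additive degrees, combined with the membership $b_\alpha^\beta \in \mathcal{K}_{(\|\beta\|-\|\alpha\|)\vee 0}$ already supplied by part (3) of the same lemma.

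I do not foresee a substantive obstacle: the identity is essentially an integration-free chain rule together with a careful change of generating vector field via $(X_t^{-1})_*$. The only mild subtlety is in the cancellation $(X_t)_*(X_t^{-1})_* = \mathrm{Id}$, which must be justified at the level of smooth random vector fields so that the resulting expression can legitimately be written in the Feynman--Kac form $P_t^{\Phi b_\alpha^\beta}$; this is routine given the $C_b^\infty$-regularity of $V_{[\beta]}$ and the standard smoothness of the stochastic flow $X_t(\cdot)$.
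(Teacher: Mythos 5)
Your proposal is correct and follows essentially the same route as the paper's own proof: apply the product rule under the expectation, identify the $V_{[\alpha]}\Phi$ term, expand the flow derivative term via the Jacobian, use part (3) of Lemma~\ref{kusuoka lemma 2} to rewrite $V_{[\alpha]}$ in terms of $(X_t^{-1})_*V_{[\beta]}$ so that the push-forward cancels, and conclude the regularity via part (1) of that same lemma. No substantive difference.
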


The following Lemma is an immediate consequence of Lemma \ref{kusuoka-lemma}.

\begin{lemma}
\label{forward-intermediate} Let $\Phi\in\mathcal{K}_{r}$ and $\alpha\in
A_{1}\left(  \ell\right)  $ then there exists $C>0$ such that
\[
\left\Vert V_{\left[  \alpha\right]  }P_{t}^{\Phi}\varphi\right\Vert _{\infty
}\leq C\sum_{\beta\in A_{0}\left(  \ell\right)  }\min\left(  t^{r/2}%
,t^{\left(  \left\Vert \beta\right\Vert -\left\Vert \alpha\right\Vert \right)
/2+r/2}\right)  \left\Vert V_{\left[  \beta\right]  }\varphi\right\Vert
_{\infty}%
\]
for all $t\in(0,1],$ $\varphi\in C_{b}^{\infty}\left(  \mathbb{R}^{N}\right)
.$ In particular, if $H$ is of the form $H=u$ $V_{i}+v$ for some $u,v$ $\in
C_{b}^{\infty},$ $i\in\left\{  1,\ldots,d_{1}\right\}  $ and $\Phi
\in\mathcal{K}_{0}$ we have%
\[
\left\Vert V_{\left[  \alpha_{1}\right]  }P_{t}^{\Phi}H\varphi\left(
x\right)  \right\Vert _{\infty}\leq C\sum_{\beta\in A_{0}\left(  \ell\right)
}\min\left(  t^{-1/2},t^{\left(  \left\Vert \beta\right\Vert -\left\Vert
\alpha\right\Vert \right)  /2-1/2}\right)  \left\Vert V_{\left[  \beta\right]
}\varphi\right\Vert _{\infty}.
\]

\end{lemma}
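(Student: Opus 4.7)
The plan is to derive the lemma directly from Lemma \ref{forward first step} (which has just been established) together with the operator-norm bound provided by Lemma \ref{kusuoka-lemma}. Since the statement asserts that the result is an immediate consequence, no new ideas are needed; the work consists of bookkeeping with the Kusuoka class $\mathcal{K}_r$ and matching the exponents in the final bound with the $\min$ expression appearing on the right-hand side.

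For the first inequality I would start from the decomposition
\[
V_{[\alpha]} P_t^{\Phi}\varphi(x) = P_t^{V_{[\alpha]}\Phi}\varphi(x) + \sum_{\beta\in A_1(\ell)} P_t^{\Phi b_{\alpha}^{\beta}}\bigl(V_{[\beta]}\varphi\bigr)(x),
\]
supplied by Lemma \ref{forward first step}, recalling that $V_{[\alpha]}\Phi\in\mathcal{K}_r$ and $\Phi b_{\alpha}^{\beta}\in\mathcal{K}_{((\|\beta\|-\|\alpha\|)\vee 0)+r}$. Applying the uniform estimate of Lemma \ref{kusuoka-lemma} term by term then gives
\[
\|V_{[\alpha]} P_t^{\Phi}\varphi\|_{\infty} \leq C t^{r/2}\|\varphi\|_{\infty} + C \sum_{\beta\in A_1(\ell)} t^{((\|\beta\|-\|\alpha\|)\vee 0+r)/2}\,\|V_{[\beta]}\varphi\|_{\infty}.
\]
A short case analysis on the sign of $\|\beta\|-\|\alpha\|$ shows that, for $t\in(0,1]$, one has $t^{((\|\beta\|-\|\alpha\|)\vee 0+r)/2}=\min\bigl(t^{r/2},t^{(\|\beta\|-\|\alpha\|)/2+r/2}\bigr)$. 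The leading term $P_t^{V_{[\alpha]}\Phi}\varphi$ is then absorbed into the $\beta=\emptyset$ summand, since $V_{[\emptyset]}=\mathrm{Id}$ and $\|\emptyset\|=0<\|\alpha\|$ forces the minimum to equal $t^{r/2}$ at $\beta=\emptyset$. This delivers the sum over $A_0(\ell)=A_1(\ell)\cup\{\emptyset\}$ claimed in the statement.

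For the "in particular" assertion with $H=uV_i+v$, I would first peel off the multiplication operator by rewriting
\[
P_t^{\Phi}(H\varphi)(x) = \mathbb{E}\bigl[\Phi\,u(X_t^x)\,V_i\varphi(X_t^x)\bigr] + \mathbb{E}\bigl[\Phi\,v(X_t^x)\,\varphi(X_t^x)\bigr] = P_t^{\Phi\tilde{u}}(V_{[i]}\varphi)(x) + P_t^{\Phi\tilde{v}}(\varphi)(x),
\]
with $\tilde{u}(t,x):=u(X_t^x)$ and $\tilde{v}(t,x):=v(X_t^x)$. By Lemma \ref{kusuoka lemma 2}(2), $\tilde{u},\tilde{v}\in\mathcal{K}_0$, and then by Lemma \ref{kusuoka lemma 2}(1), $\Phi\tilde{u},\Phi\tilde{v}\in\mathcal{K}_0$. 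Next I would use Lemma \ref{kusuoka-lemma} to absorb the inner derivative, namely $P_t^{\Phi\tilde{u}}V_{[i]} = P_t^{\Psi}$ for some $\Psi\in\mathcal{K}_{-1}$, since $\|(i)\|=1$ for $i\in\{1,\ldots,d_1\}$. Applying the first part of the lemma to $P_t^{\Psi}$ (with $r=-1$) and to $P_t^{\Phi\tilde{v}}$ (with $r=0$), and observing that for $t\in(0,1]$ the $r=-1$ bound $\min(t^{-1/2},t^{(\|\beta\|-\|\alpha_1\|)/2-1/2})$ dominates the $r=0$ bound $\min(1,t^{(\|\beta\|-\|\alpha_1\|)/2})$ term by term, yields the stated inequality.

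There is no genuine obstacle here; the only point demanding any care is the case distinction that identifies $t^{((\|\beta\|-\|\alpha\|)\vee 0 + r)/2}$ with $\min(t^{r/2},t^{(\|\beta\|-\|\alpha\|)/2+r/2})$ on $(0,1]$, together with a correct accounting of the index $i$ as an element of $A_1(\ell)$ of $\|\cdot\|$-degree $1$, so that exactly one half-power of $t$ is lost when absorbing $V_{[i]}$ into the Kusuoka class.
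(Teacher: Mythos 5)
Your proposal follows the same route as the paper's own proof: apply Lemma \ref{forward first step} to obtain the decomposition of $V_{[\alpha]}P_t^{\Phi}\varphi$ into $P_t^{V_{[\alpha]}\Phi}\varphi$ plus a sum of $P_t^{\Phi b_\alpha^\beta}(V_{[\beta]}\varphi)$ terms, identify the $P_t^{V_{[\alpha]}\Phi}$ contribution with the $\beta=\emptyset$ summand, then invoke the operator bound of Lemma \ref{kusuoka-lemma} and match exponents via the identity $t^{((\|\beta\|-\|\alpha\|)\vee 0 + r)/2}=\min(t^{r/2},t^{(\|\beta\|-\|\alpha\|)/2+r/2})$ on $(0,1]$; for the second claim, absorb $H=uV_i+v$ into the Kusuoka class (dropping the regularity index by $1$ on the $uV_i$ part) and apply the first part with $r=-1$ and $r=0$. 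You actually spell out the multiplication-operator bookkeeping ($\tilde u,\tilde v\in\mathcal{K}_0$ via Lemma \ref{kusuoka lemma 2}) a bit more explicitly than the paper, whose sketch for the second claim is terse, but the underlying argument is identical.
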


\begin{proof}
By Lemma \ref{forward first step} there exist $\Phi_{\beta}\in\mathcal{K}%
_{\left[  \left(  \left\Vert \beta\right\Vert -\left\Vert \alpha\right\Vert
\right)  \vee0\right]  +r.}$ such that
\begin{align*}
&  \left\Vert V_{\left[  \alpha\right]  }P_{t}^{\Phi}\varphi\left(  x\right)
\right\Vert _{\infty}\\
&  \leq\sum_{\beta\in A_{0}\left(  \ell\right)  }\left\Vert P_{t}^{\Phi
_{\beta}}V_{\left[  \beta\right]  }\varphi\right\Vert _{\infty}\\
&  \leq C\sum_{\beta\in A_{0}\left(  \ell\right)  }\min\left(  t^{r/2}%
,t^{\left(  \left\Vert \beta\right\Vert -\left\Vert \alpha\right\Vert \right)
/2+r/2}\right)  \left\Vert V_{\left[  \beta\right]  }\varphi\right\Vert
_{\infty}.
\end{align*}
The last inequality is a consequence of Lemma \ref{kusuoka-lemma} (2). To
deduce the second claim from the first of the proposition we note that by
\cite{kusuoka} Corollary 9 (2) if $\Phi\in\Phi_{a}\in\mathcal{K}_{r}$ there
exists $\Phi_{a}\in\mathcal{K}_{r-\left\Vert \alpha\right\Vert }$ such that
$P_{t}^{\Phi}V_{i}=P_{t}^{\Phi_{a}}$.
\end{proof}

Intuitively the preceding lemma provides us with a uniform (for small times)
bound when we move derivatives through the heat kernel from the outside to the inside.

We now consider the reverse situation in which we move the vector fields from
the inside to the outside. We have the following Lemma.

\begin{lemma}
\label{backward estimate}Let $\Phi\in\mathcal{K}_{r}$ and $\alpha\in
A_{1}\left(  \ell\right)  $ then there exists $\Phi_{\beta}\in\mathcal{K}_{r}$
and $\Phi a_{\alpha}^{\beta}\in\mathcal{K}_{\left(  \left\Vert \beta
\right\Vert -\left\Vert \alpha\right\Vert \vee0\right)  +r}$ such that
\[
\left(  P_{t}^{\Phi}V_{\left[  \alpha\right]  }\varphi\right)  \left(
x\right)  =\sum_{\beta\in A_{1}\left(  \ell\right)  }\left\{  V_{\left[
\beta\right]  }P_{t}^{a_{\alpha}^{\beta}\Phi}\varphi\left(  x\right)
-P_{t}^{\Phi_{\beta}}\varphi\right\}  ,
\]
for all $\varphi\in C_{b}^{\infty}\left(  \mathbb{R}^{N}\right)  .$
\end{lemma}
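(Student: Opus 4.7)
The plan is to mirror Kusuoka's strategy used to produce the \emph{forward} transfer of vector fields in Lemma \ref{forward first step}, but run it in reverse: we wish to move $V_{[\alpha]}$ from the inside of the expectation (where it acts at the random terminal point $X_t(x)$) to the outside (where it acts on the initial point $x$), paying a boundary term of Leibniz type that we absorb into a new $\mathcal{K}_r$-weight. The key geometric ingredient is the second identity in Lemma \ref{kusuoka lemma 2}~(3), read in the pushed-forward form.

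First I would unfold the definition
\[
P_t^{\Phi} V_{[\alpha]} \varphi(x) = \mathbb{E}\bigl[\Phi(t,x)\,(V_{[\alpha]}\varphi)(X_t(x))\bigr].
\]
Applying $(X_t)_{\ast}$ to both sides of $((X_t^{-1})_{\ast} V_{[\alpha]})(x) = \sum_{\beta} a_\alpha^\beta(t,x) V_{[\beta]}(x)$ and evaluating at $X_t(x)$ gives the tangent-vector identity
\[
V_{[\alpha]}(X_t(x)) = \sum_{\beta \in A_1(\ell)} a_\alpha^\beta(t,x)\,\bigl((X_t)_{\ast} V_{[\beta]}\bigr)(X_t(x)),
\]
since $(X_t)_{\ast}(X_t^{-1})_{\ast} = \mathrm{Id}$ and the scalar coefficients $a_\alpha^\beta(t,x)$ survive the pushforward unchanged. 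Applying this to $\varphi$ and combining with the chain-rule identity $((X_t)_{\ast} V_{[\beta]})\varphi\bigl(X_t(x)\bigr) = V_{[\beta]}[\varphi \circ X_t](x)$ gives
\[
(V_{[\alpha]}\varphi)(X_t(x)) = \sum_{\beta} a_\alpha^\beta(t,x)\,V_{[\beta]}\bigl[\varphi \circ X_t\bigr](x).
\]
Since $V_{[\beta]}$ is a derivation in $x$, the Leibniz rule gives $\Phi a_\alpha^\beta V_{[\beta]}[\varphi \circ X_t] = V_{[\beta]}[\Phi a_\alpha^\beta \varphi \circ X_t] - V_{[\beta]}(\Phi a_\alpha^\beta)\,\varphi \circ X_t$. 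Taking expectations, interchanging $V_{[\beta]}$ with $\mathbb{E}$ (justified by the Malliavin smoothness built into the $\mathcal{K}_{r}$ spaces), and recognising the two resulting expectations as $V_{[\beta]} P_t^{\Phi a_\alpha^\beta}\varphi(x)$ and $P_t^{V_{[\beta]}(\Phi a_\alpha^\beta)}\varphi(x)$, respectively, yields the claimed identity with $\Phi_\beta := V_{[\beta]}(\Phi a_\alpha^\beta)$.

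The regularity bookkeeping is then routine: Lemma \ref{kusuoka lemma 2}~(3) gives $a_\alpha^\beta \in \mathcal{K}_{(\|\beta\|-\|\alpha\|) \vee 0}$, and Lemma \ref{kusuoka lemma 2}~(1) promotes the product $\Phi a_\alpha^\beta$ to $\mathcal{K}_{r + (\|\beta\|-\|\alpha\|) \vee 0}$. For $\Phi_\beta$ one expands $V_{[\beta]}(\Phi a_\alpha^\beta) = V_{[\beta]}(\Phi)\,a_\alpha^\beta + \Phi\,V_{[\beta]}(a_\alpha^\beta)$ and observes that differentiating an element of $\mathcal{K}_s$ by a $C_b^\infty$ vector field in $x$ preserves the order $s$, while the factors $a_\alpha^\beta$ and $V_{[\beta]}(a_\alpha^\beta)$ lie in $\mathcal{K}_0$, so each summand belongs to $\mathcal{K}_{r + (\|\beta\|-\|\alpha\|)\vee 0} \subset \mathcal{K}_r$. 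I expect the only genuinely delicate point to be getting the pushforward identity precisely right with Kusuoka's $(X_t^{-1})_{\ast}$ convention; once the vector-field identity $V_{[\alpha]}(X_t(x)) = \sum a_\alpha^\beta\,((X_t)_{\ast} V_{[\beta]})(X_t(x))$ is established, the rest is a one-line Leibniz computation followed by standard $\mathcal{K}_{r}$-algebra.
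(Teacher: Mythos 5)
Your proof is correct and follows essentially the same route as the paper's: insert the identity $(X_t)_\ast(X_t^{-1})_\ast = \mathrm{Id}$, expand $(X_t^{-1})_\ast V_{[\alpha]}$ via Lemma \ref{kusuoka lemma 2}~(3), apply the Leibniz rule in $x$, and identify the two resulting expectations with $V_{[\beta]}P_t^{a_\alpha^\beta\Phi}\varphi$ and the correction $P_t^{V_{[\beta]}(\Phi a_\alpha^\beta)}\varphi$. The only (cosmetic) difference is that you phrase the key manipulation in pushforward language, whereas the paper works with components and the Jacobian $J_t^{ij}$; the regularity bookkeeping at the end, noting $\mathcal{K}_{r+(\|\beta\|-\|\alpha\|)\vee 0}\subset\mathcal{K}_r$ for $t\in(0,1]$, matches the paper's conclusion $\Phi_\beta\in\mathcal{K}_r$.
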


\begin{proof}
We have using Lemma \ref{kusuoka lemma 2} (3)%
\begin{align*}
\left(  P_{t}^{\Phi}V_{\left[  \alpha\right]  }\varphi\right)  \left(
x\right)   &  =E\left[  \Phi\sum_{i=1}^{N}V_{\left[  \alpha\right]  }%
^{i}\left(  X_{t}\left(  x\right)  \right)  \left(  \frac{\partial}{\partial
x_{i}}\varphi\right)  \left(  X_{t}\left(  x\right)  \right)  \right] \\
&  =E\left[  \Phi\sum_{i=1}^{N}\left(  \left(  X_{t}\right)  _{\ast}\left(
X_{t}^{-1}\right)  _{\ast}V_{\left[  \alpha\right]  }\right)  ^{i}\left(
X_{t}\left(  x\right)  \right)  \left(  \frac{\partial}{\partial x_{i}}%
\varphi\right)  \left(  X_{t}\left(  x\right)  \right)  \right] \\
&  =E\left[  \Phi\sum_{i,j=1}^{N}\left(  \left(  X_{t}^{-1}\right)  _{\ast
}V_{\left[  \alpha\right]  }\left(  x\right)  \right)  ^{j}J_{t}^{ij}\left(
x\right)  \left(  \frac{\partial}{\partial x_{i}}\varphi\right)  \left(
X_{t}\left(  x\right)  \right)  \right] \\
&  =E\left[  \Phi\sum_{\beta\in A_{1}\left(  \ell\right)  }a_{\alpha}^{\beta
}\left(  t,x\right)  \sum_{j=1}^{N}V_{\left[  \beta\right]  }^{j}\left(
x\right)  \sum_{i=1}^{N}J_{t}^{ij}\left(  x\right)  \left(  \frac{\partial
}{\partial x_{i}}\varphi\right)  \left(  X_{t}\left(  x\right)  \right)
\right]  .\\
&  =\sum_{\beta\in A_{1}\left(  \ell\right)  }E\left[  \Phi a_{\alpha}^{\beta
}\left(  t,x\right)  \sum_{j=1}^{N}V_{\left[  \beta\right]  }^{j}\left(
x\right)  \frac{\partial}{\partial x_{j}}\varphi\left(  X_{t}\left(  x\right)
\right)  \right]  ,
\end{align*}
where $\Phi a_{\alpha}^{\beta}\in\mathcal{K}_{\left(  \left\Vert
\beta\right\Vert -\left\Vert \alpha\right\Vert \vee0\right)  +r}.$ On the
other hand we have%
\begin{align*}
&  V_{\left[  \beta\right]  }P_{t}^{a_{\alpha}^{\beta}\Phi}\varphi\left(
x\right) \\
&  =E\left[  \Phi a_{\alpha}^{\beta}\left(  t,x\right)  \sum_{j=1}%
^{N}V_{\left[  \beta\right]  }^{j}\left(  x\right)  \frac{\partial}{\partial
x_{j}}\varphi\left(  X_{t}\left(  x\right)  \right)  \right]  +E\left[
V_{\left[  \beta\right]  }\left(  \Phi a_{\alpha}^{\beta}\right)  \left(
t,x\right)  \varphi\left(  X_{t}\left(  x\right)  \right)  \right]
\end{align*}
and deduce that
\[
\left(  P_{t}^{\Phi}V_{\left[  \alpha\right]  }\varphi\right)  \left(
x\right)  =\sum_{\beta\in A_{1}\left(  \ell\right)  }\left\{  V_{\left[
\beta\right]  }P_{t}^{a_{\alpha}^{\beta}\Phi}\varphi\left(  x\right)
-E\left[  V_{\left[  \beta\right]  }\left(  \Phi a_{\alpha}^{\beta}\right)
\left(  t,x\right)  \varphi\left(  X\left(  t,x\right)  \right)  \right]
\right\}  ,
\]
where $V_{\left[  \beta\right]  }\left(  a_{\alpha}^{\beta}\Phi\right)
\left(  t,x\right)  \in\mathcal{K}_{r}$ and $a_{\alpha}^{\beta}\in
\mathcal{K}_{\left(  \left\Vert \beta\right\Vert -\left\Vert \alpha\right\Vert
\vee0\right)  }.$
\end{proof}

The representation obtained in the previous lemma generalises to multiple heat
kernels as we observe in the following proposition.

\begin{proposition}
\label{backward prop}Let $k\in\mathbb{N},$ $\Phi_{k}\in\mathcal{K}_{r},$
$\Phi_{j}\in\mathcal{K}_{0}$ for $1\leq j<k$, $\alpha\in A_{1}\left(
\ell\right)  ,$ and $H_{j}=u_{j}V_{i_{j}}+v_{j},$ where $1\leq i_{j}\leq d,$
$u_{j},v_{j}\in C_{b}^{\infty}\left(  \mathbb{R}^{N}\right)  ,$ $j=1,\ldots
,k-1$. Then there exist $\Phi_{\beta^{1}}\in\mathcal{K}_{r_{1}},$ \ldots
,$\Phi_{\beta^{k}}\in\mathcal{K}_{r_{k}}$ such that $r_{k}\geq r$,
$r_{1},\ldots r_{k-1}\geq-1/2$ and
\begin{equation}
r_{1}+r_{2}+\cdots+r_{k}\geq\left(  \left\Vert \beta^{1}\right\Vert
-\left\Vert \alpha\right\Vert \right)  \vee0-\left(  k-1\right)  /2+r
\label{strange ineq}%
\end{equation}
and%
\begin{align*}
&  P_{t_{1}}^{\Phi_{1}}H_{1}P_{t_{2}}^{\Phi_{2}}\cdots H_{k-1}P_{t_{k}}%
^{\Phi_{k}}V_{\left[  \alpha\right]  }\varphi\left(  x\right) \\
&  =\sum_{\beta^{1}\in A_{0}\left(  \ell\right)  }\cdots\sum_{\beta^{k}\in
A_{0}\left(  \ell\right)  }V_{\left[  \beta^{1}\right]  }P_{t_{1}}%
^{\Phi_{\beta^{1}}}P_{t_{2}}^{\Phi_{\beta^{2}}}\cdots P_{t_{k}}^{\Phi
_{\beta^{k}}}\varphi\left(  x\right)
\end{align*}
holds for all $\varphi\in C_{b}^{\infty}\left(  \mathbb{R}^{N}\right)  .$
\end{proposition}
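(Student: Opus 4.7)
I would argue by induction on $k$. For the base case $k=1$, the claim is a direct rewriting of Lemma \ref{backward estimate}: the correction term $P_{t_1}^{\Phi_\beta}\varphi$ fits the form $V_{[\beta^1]}P_{t_1}^{\Phi_{\beta^1}}\varphi$ with $\beta^1=\emptyset$ (using $V_{[\emptyset]}=\mathrm{Id}$), while the main term $V_{[\beta]}P_{t_1}^{a_\alpha^\beta \Phi}\varphi$ corresponds to $\beta^1=\beta\in A_1(\ell)$. The degree bounds $r_1\geq r$ and $r_1\geq (\Vert\beta^1\Vert-\Vert\alpha\Vert)\vee 0 + r$ then follow immediately from the $\mathcal{K}$-degrees given in Lemma \ref{backward estimate}.

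For the inductive step, I would apply the induction hypothesis to the sub-expression $P_{t_2}^{\Phi_2} H_2 \cdots H_{k-1} P_{t_k}^{\Phi_k} V_{[\alpha]}\varphi$ to obtain
\[
P_{t_2}^{\Phi_2} H_2 \cdots H_{k-1} P_{t_k}^{\Phi_k} V_{[\alpha]}\varphi = \sum_{\beta^2,\ldots,\beta^k} V_{[\beta^2]} P_{t_2}^{\Phi_{\beta^2}} \cdots P_{t_k}^{\Phi_{\beta^k}}\varphi,
\]
with degrees $r_2,\ldots,r_k$ satisfying the $(k-1)$ version of (\ref{strange ineq}). Prepending $P_{t_1}^{\Phi_1} H_1$ then reduces the inductive step to establishing, for each fixed $(\beta^2,\ldots,\beta^k)$, an expansion
\[
P_{t_1}^{\Phi_1} H_1 V_{[\beta^2]} \psi = \sum_{\beta^1 \in A_0(\ell)} V_{[\beta^1]} P_{t_1}^{\tilde\Phi_{\beta^1}}\psi,
\]
where $\psi := P_{t_2}^{\Phi_{\beta^2}} \cdots P_{t_k}^{\Phi_{\beta^k}}\varphi$ and the weights $\tilde\Phi_{\beta^1}$ have $\mathcal{K}$-degrees compatible with the claimed cumulative bound.

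The algebraic core of this step uses $H_1 = u_1 V_{i_1} + v_1$ together with the commutation relation $V_{i_1} V_{[\beta^2]} = V_{[\beta^2]} V_{i_1} - V_{[\beta^2 \ast i_1]}$ (immediate from $V_{[\alpha\ast i]}=[V_{[\alpha]},V_i]$) and the UFG decomposition $V_{[\beta^2 \ast i_1]} = \sum_{\gamma \in A_1(\ell)} u_{\beta^2 \ast i_1,\gamma} V_{[\gamma]}$, to rewrite $H_1 V_{[\beta^2]}\psi$ as a finite $C_b^\infty$-linear combination of terms of the form $V_{[\delta]} V_{i_1}\psi$ and $V_{[\delta]}\psi$ with $\delta \in A_0(\ell)$. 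After applying $P_{t_1}^{\Phi_1}$, the scalar coefficients are absorbed into the weight via Lemma \ref{kusuoka lemma 2}(1),(2) with no change in $\mathcal{K}$-degree, the lingering $V_{i_1}$ is pushed into $P_{t_1}$ via Lemma \ref{kusuoka-lemma} at the cost of $\Vert(i_1)\Vert=1$ in degree, and a final application of Lemma \ref{backward estimate} moves each remaining $V_{[\delta]}$ past $P_{t_1}$ to the leftmost position, producing the desired expansion.

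The main technical obstacle is the careful $\mathcal{K}$-degree bookkeeping needed to verify (\ref{strange ineq}). The absorbed $V_{i_j}$'s are the only source of degree loss and collectively account for the $-(k-1)/2$ correction term on the right-hand side of (\ref{strange ineq}). The triangle-type inequality $(\Vert\beta^1\Vert-\Vert\alpha\Vert)\vee 0 \leq (\Vert\beta^1\Vert-\Vert\delta\Vert)\vee 0 + (\Vert\delta\Vert-\Vert\alpha\Vert)\vee 0$ handles the composition of degree shifts coming from applying Lemma \ref{backward estimate} at different stages, and combined with the induction hypothesis yields the required cumulative bound. The pointwise constraint $r_j\geq -1/2$ for $j<k$ follows from the fact that each single absorption step decreases the degree by at most the corresponding share of this threshold.
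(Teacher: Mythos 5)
Your proposal peels the composition from the \emph{outside} in: you first apply the induction hypothesis to the inner block $P_{t_2}^{\Phi_2}H_2\cdots P_{t_k}^{\Phi_k}V_{[\alpha]}\varphi$ and then commute the remaining $H_1$ past the resulting $V_{[\beta^2]}$. The paper peels from the \emph{inside} out: it absorbs the innermost $H$ into the adjacent heat kernel, $P_{t_k}^{\Phi_k}uV_i = P_{t_k}^{\bar\Phi_k}$ with $\bar\Phi_k$ of lower $\mathcal{K}$-degree, applies Lemma \ref{backward estimate} to $P_t^{\Phi}V_{[\alpha]}$, and only then invokes the induction hypothesis on the shortened composition. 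The crucial consequence is that the paper \emph{never produces a commutator} $V_{[\beta^2\ast i_1]}$: all degree loss appears as a uniform drop in $\mathcal{K}$-index, which passes through the $(\cdot)\vee 0$ triangle inequality $(\Vert\beta^1\Vert-\Vert\beta^2\Vert)\vee 0 + (\Vert\beta^2\Vert-\Vert\alpha\Vert)\vee 0 \geq (\Vert\beta^1\Vert-\Vert\alpha\Vert)\vee 0$ with no leakage.

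Your commutator route, by contrast, transfers the degree loss into the multi-index degree: for the term $-u_1 V_{[\beta^2\ast i_1]}\psi$, Lemma \ref{backward estimate} yields a compensating weight only in $\mathcal{K}_{(\Vert\beta^1\Vert-\Vert\beta^2\Vert-1)\vee 0}$, and since $(a-1)\vee 0$ can be a full unit below $(a\vee 0)$, this is a loss of up to $1$ in the $\mathcal{K}$-index budget. You then compose it with the inductive bound $(\Vert\beta^2\Vert-\Vert\alpha\Vert)\vee 0 - (k-2)/2 + r$, but the gain available in \eqref{strange ineq} is only $1/2$ per level. Concretely, with $\Vert\beta^2\Vert = \Vert\alpha\Vert$ and $\Vert\beta^1\Vert = \Vert\beta^2\Vert+1$, your chain gives $r_1+\cdots+r_k \geq 0 + 0 - (k-2)/2 + r$ whereas \eqref{strange ineq} demands $1 - (k-1)/2 + r$, which is strictly larger. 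Your triangle inequality $(\Vert\beta^1\Vert-\Vert\alpha\Vert)\vee 0\leq(\Vert\beta^1\Vert-\Vert\delta\Vert)\vee 0+(\Vert\delta\Vert-\Vert\alpha\Vert)\vee 0$ does not repair this, because for the commutator branch $\delta = \beta^2\ast i_1$ (or a UFG index $\gamma$) while the inductive hypothesis controls $\Vert\beta^2\Vert$, not $\Vert\delta\Vert$, and the bridging term $(\Vert\delta\Vert-\Vert\beta^2\Vert)\vee 0$ costs you a full unit with nothing in the budget to pay for it. Your statement that ``the absorbed $V_{i_j}$'s are the only source of degree loss'' is therefore not accurate; the commutator branch is a second, independent source. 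There is also an internal inconsistency in your bookkeeping: you assign each absorbed $V_{i_j}$ a cost of $\Vert(i_1)\Vert = 1$ but assert these sum to $-(k-1)/2$; $(k-1)$ such absorptions give $-(k-1)$, not $-(k-1)/2$. Finally, a minor point: the lingering $V_{i_1}$ sits between $V_{[\delta]}$ and $\psi$, so it cannot be pushed into $P_{t_1}$ \emph{before} $V_{[\delta]}$ is moved out via Lemma \ref{backward estimate}; the two operations need to be performed in the opposite order from the one you describe.
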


Before we begin the proof of this proposition we examine the meaning of the
assumptions on the $r_{j}.$ The assumptions $r_{1},\ldots,r_{k-1}\geq-1/2$
imply that singularities in the bounds%
\[
\left\Vert P_{t}^{\Phi_{r_{j}}}\varphi\right\Vert _{\infty}\leq t^{r_{j}%
/2}\left\Vert \varphi\right\Vert _{\infty}%
\]
in Lemma \ref{kusuoka-lemma} are integrable. The inequality $\left(
\ref{strange ineq}\right)  $ can be interpreted as follows: The left hand side
is the total regularity of the resulting expression in the proposition. For
every application of an operator $H$ we loose $1/2$ regularity reflected in
the term $-\left(  k-1\right)  /2.$ The degree of a singularity introduced by
differentiating by $V_{\left[  \alpha\right]  }$ depends on $\left\Vert
\alpha\right\Vert .$ Thus if $\left\Vert \beta\right\Vert >\left\Vert
\alpha\right\Vert $ and we replace a $V_{\left[  \alpha\right]  }$ by
$V_{\left[  \beta\right]  }$ \ we expect a compensating term, which is
captured in $\left(  \left\Vert \beta^{1}\right\Vert -\left\Vert
\alpha\right\Vert \right)  \vee0.$

\begin{proof}
As before it is by linearity sufficient to consider the case $H_{j}%
=u_{j}V_{i_{j}},$ for some $u_{j}\in C_{b}^{\infty}\left(  \mathbb{R}%
^{N}\right)  $ the case of the multiplication operator $v_{j}$ following by a
similar but easier calculation. We argue by induction, the base case being
covered by Lemma \ref{backward estimate}. For the inductive step we note that
if $\Phi_{k}\in\mathcal{K}_{0}$ then by Lemma \ref{kusuoka-lemma} there exists
$\bar{\Phi}_{k}\in\mathcal{K}_{-1/2}$ such that $P_{t}^{\Phi_{k}}uV_{i}%
=P_{t}^{\bar{\Phi}_{k}}$. Combining this fact with Lemma
\ref{backward estimate} we see
\[
P_{t_{1}}^{\Phi_{1}}H_{1}P_{t_{2}}^{\Phi_{2}}\cdots H_{k-1}P_{t_{k}}^{\Phi
_{k}}HP_{t}^{\Phi}V_{\left[  \alpha\right]  }\varphi\left(  x\right)
\]%
\begin{align*}
&  =P_{t_{1}}^{\Phi_{1}}H_{1}P_{t_{2}}^{\Phi_{2}}\cdots H_{k-1}P_{t_{k}}%
^{\Phi_{k}}uV_{i}P_{t}^{\Phi}V_{\left[  \alpha\right]  }\varphi\left(
x\right) \\
&  =P_{t_{1}}^{\Phi_{1}}H_{1}P_{t_{2}}^{\Phi_{2}}\cdots H_{k-1}P_{t_{k}%
}^{\overline{\Phi}_{k}}P_{t}^{\Phi}V_{\left[  \alpha\right]  }\varphi\left(
x\right) \\
&  =\sum_{\beta\in A_{0}\left(  \ell\right)  }P_{t_{1}}^{\Phi_{1}}%
H_{1}P_{t_{2}}^{\Phi_{2}}\cdots H_{k-1}P_{t_{k}}^{\overline{\Phi}_{k}%
}V_{\left[  \beta\right]  }P_{t}^{\Phi_{\beta}}\varphi\left(  x\right)  ,
\end{align*}
where $\Phi_{\beta}\in\mathcal{K}_{\left[  \left(  \left\Vert \beta\right\Vert
-\left\Vert \alpha\right\Vert \right)  \vee0\right]  +r}$ and $\overline{\Phi
}_{k}\in\mathcal{K}_{-1/2}$ Using the inductive hypothesis we get
\begin{align*}
&  \sum_{\beta\in A_{0}\left(  \ell\right)  }P_{t_{1}}^{\Phi_{1}}H_{1}%
P_{t_{2}}^{\Phi_{2}}\cdots H_{k-1}P_{t_{k}}^{\overline{\Phi}_{k}}V_{\left[
\beta\right]  }P_{t}^{\Phi_{\beta}}\varphi\left(  x\right) \\
&  =\sum_{\beta^{1}\in A_{0}\left(  \ell\right)  }\cdots\sum_{\beta^{k}\in
A_{0}\left(  \ell\right)  }\sum_{\beta\in A_{0}\left(  \ell\right)
}V_{\left[  \beta^{1}\right]  }P_{t_{1}}^{\Phi_{\beta^{1}}}P_{t_{2}}%
^{\Phi_{\beta^{2}}}\cdots P_{t_{k}}^{\Phi_{\beta^{k}}}P_{t}^{\Phi_{\beta}%
}\varphi\left(  x\right)  .
\end{align*}
From the inductive hypothesis we know that $\Phi_{\beta^{1}}\in\mathcal{K}%
_{r_{1}},$ \ldots,$\Phi_{\beta^{k}}\in\mathcal{K}_{r_{k}}$ such that
$r_{1},\ldots,$ $r_{k}\geq-1/2$ (using that $\overline{\Phi}_{k}\in
\mathcal{K}_{-1/2}$) and
\[
r_{1}+r_{2}+\cdots+r_{k}\geq\left(  \left\Vert \beta^{1}\right\Vert
-\left\Vert \beta\right\Vert \right)  \vee0-k/2.
\]
Hence, as required
\begin{align*}
&  \left[  \left(  \left\Vert \beta\right\Vert -\left\Vert \alpha\right\Vert
\right)  \vee0\right]  +r+r_{1}+r_{2}+\cdots+r_{k}\\
&  \geq\left[  \left(  \left\Vert \beta\right\Vert -\left\Vert \alpha
\right\Vert \right)  \vee0\right]  +r+\left(  \left\Vert \beta^{1}\right\Vert
-\left\Vert \beta\right\Vert \right)  \vee0-k/2\\
&  \geq\left(  \left\Vert \beta^{1}\right\Vert -\left\Vert \alpha\right\Vert
\right)  \vee0-k/2+r.
\end{align*}

\end{proof}

We are ready to prove the first main regularity estimate Proposition
\ref{first a priori estimate}.

\begin{proof}
[Proof of Proposition \ref{first a priori estimate}]Note that arguing as in
the proof of Lemma \ref{help 2} it is sufficient to show
\[
\left\Vert V_{\left[  \alpha\right]  }\bar{R}_{\left(  0,t_{1},\ldots
,t_{k},t\right)  }^{m,\bar{j}_{1},...,\bar{j}_{k}}V_{\left[  \beta\right]
}\varphi\right\Vert _{\infty}\leq c_{m}t^{-\left(  \left\Vert \alpha
\right\Vert +\left\Vert \beta\right\Vert \right)  /2}\frac{1}{\sqrt
{t_{1}-t_{0}}}\cdots\frac{1}{\sqrt{t_{k}-t_{k-1}}}\left\Vert \varphi
\right\Vert _{\infty}%
\]
for some constant $c_{m}$ (the bounds on $\hat{R}_{\left(  0,t_{1}%
,\ldots,t_{k},t\right)  }^{m,\bar{j}_{1},...,\bar{j}_{k}}$ follow by using the
same arguments). The functions $V_{\left[  \alpha\right]  }\bar{R}_{\left(
0,t_{1},\ldots,t_{k},t\right)  }^{m,\bar{j}_{1},...,\bar{j}_{k}}V_{\left[
\beta\right]  }\varphi$ are linear combination of terms of the form%

\[
V_{\left[  \alpha\right]  }P_{t_{1}}^{\Phi}H_{1}P_{t_{2}-t_{1}}^{\Phi}\cdots
P_{t_{k}-t_{k-1}}^{\Phi}H_{k}P_{t-t_{k}}^{\Phi}V_{\left[  \beta\right]
}\varphi
\]
for some $\Phi\in\mathcal{K}_{0}$ and $H_{j}=u_{j}$ $V_{i_{j}}+v_{j}$ with
$u_{j},v_{j}$ $\in C_{b}^{\infty}$ $.$ Recall the convention $t=t_{k+1}.$
Suppose $\left[  t_{j-1},t_{j}\right]  $ is the maximal subinterval, i.e.
satisfies
\begin{equation}
t_{j}-t_{j-1}=\max_{i=1,\ldots k+1}\left(  t_{i}-t_{i+1}\right)  \geq\frac
{t}{k} \label{largest step}%
\end{equation}
For notational reasons we have to treat the case $j=k+1$ separately, however
it will be clear from the proof that the same arguments apply in this case.

Suppose now that $j\in\{1,\ldots,k\}$ , then by Proposition
\ref{backward prop} we observe that%
\begin{align*}
&  P_{t_{j+1}-t_{j}}^{\Phi}H_{j+1}P_{t_{j+2}-t_{j+1}}^{\Phi}\cdots
H_{k}P_{t-t_{k}}^{\Phi}V_{\left[  \beta\right]  }\varphi\left(  x\right) \\
&  =\sum_{\beta^{j+1}\in A_{0}\left(  \ell\right)  }\cdots\sum_{\beta^{k}\in
A_{0}\left(  \ell\right)  }G_{\beta^{j+1},\ldots,\beta^{k}}\left(  x\right)  ,
\end{align*}
where
\[
G_{\beta^{j+1},\ldots,\beta^{k}}:=V_{\left[  \beta^{j+1}\right]  }%
P_{t_{j+1}-t_{j}}^{\Phi_{\beta^{j+1}}}P_{t_{j+2}-t_{j+1}}^{\Phi_{\beta^{j+2}}%
}\cdots P_{t-t_{k}}^{\Phi_{\beta^{k+1}}}\varphi,
\]
for some functionals $\Phi_{\beta^{j+1}}\in\mathcal{K}_{r_{1}},$ \ldots
,$\Phi_{\beta^{k+1}}\in\mathcal{K}_{r_{k}}$ with $r_{k+1}\geq0$,
$r_{j+1},\ldots r_{k}\geq-1/2$ and $r_{j+1}+r_{2}+\cdots+r_{k+1}\geq\left(
\left\Vert \beta^{j+1}\right\Vert -\left\Vert \beta\right\Vert \right)
\vee0-\left(  k-j\right)  /2.$

It follows from the maximality of $\left[  t_{j},t_{j-1}\right]  $ that%
\begin{align*}
&  \left(  t_{j+1}-t_{j}\right)  ^{r_{j+1}}\cdots\left(  t_{k}-t_{k-1}\right)
^{r_{k}}\\
&  \leq\left(  t_{j+1}-t_{j}\right)  ^{-1/2}\cdots\left(  t_{k}-t_{k-1}%
\right)  ^{-1/2}\left(  t_{j}-t_{j-1}\right)  ^{\left[  \left(  \left\Vert
\beta^{j+1}\right\Vert -\left\Vert \beta\right\Vert \right)  \vee0\right]
/2}.
\end{align*}
\newline On the other hand, to pass the derivative $V_{\left[  \alpha\right]
}$ to $P_{t_{j}-t_{j-1}}^{\Phi}$ we will iteratively use Lemma
\ref{forward-intermediate}. Once again by maximality of $\left[  t_{j}%
,t_{j-1}\right]  $ it follows that
\begin{align*}
&  \left(  t_{1}-t_{0}\right)  ^{-1/2\vee\left(  \left\Vert \beta
^{1}\right\Vert -\left\Vert \alpha\right\Vert \right)  /2}\cdots\left(
t_{j-1}-t_{j-2}\right)  ^{-1/2\vee\left(  \left\Vert \beta^{j-1}\right\Vert
-\left\Vert \beta^{j-2}\right\Vert \right)  /2}\\
&  \leq\left(  t_{1}-t_{0}\right)  ^{-1/2}\cdots\left(  t_{j-1}-t_{j-2}%
\right)  ^{-1/2}\left(  t_{j}-t_{j-1}\right)  ^{\left[  \left(  \left\Vert
\beta^{j-1}\right\Vert -\left\Vert \alpha\right\Vert \right)  \vee0\right]
/2}.
\end{align*}
Using Lemma \ref{forward-intermediate} iteratively we see from our preceding
observations that
\begin{align*}
&  \left\Vert V_{\left[  \alpha\right]  }P_{t_{1}}^{\Phi}H_{1}P_{t_{2}-t_{1}%
}^{\Phi}\cdots P_{t_{k}-t_{k-1}}^{\Phi}H_{k}P_{t-t_{k}}^{\Phi}V_{\left[
\beta\right]  }\varphi\right\Vert _{\infty}\\
&  =\left\Vert \sum_{\beta^{j+1}\in A_{0}\left(  \ell\right)  }\cdots
\sum_{\beta^{k}\in A_{0}\left(  \ell\right)  }V_{\left[  \alpha\right]
}P_{t_{1}}^{\Phi}H_{1}\cdots H_{j-1}P_{t_{j}-t_{j-1}}^{\Phi}H_{j}%
G_{\beta^{j+1},\ldots,\beta^{k}}\right\Vert _{\infty}\\
&  \leq\widetilde{C}^{j}\frac{1}{\sqrt{t_{1}-t_{0}}}\cdots\frac{1}%
{\sqrt{t_{j-1}-t_{j-2}}}\\
&  \sum_{\beta^{j-1},\ldots,\beta^{k}\in A_{0}\left(  \ell\right)  }\left(
t_{j}-t_{j-1}\right)  ^{\left[  \left(  \left\Vert \beta^{j-1}\right\Vert
-\left\Vert \alpha\right\Vert \right)  \vee0\right]  /2}\left\Vert V_{\left[
\beta^{j-1}\right]  }P_{t_{j}-t_{j-1}}^{\Phi}H_{j}G_{\beta^{j+1},\ldots
,\beta^{k}}\right\Vert _{\infty}\\
&  \leq\widetilde{C}^{k}\frac{1}{\sqrt{t_{1}-t_{0}}}\cdots\frac{1}{\sqrt
{t_{k}-t_{k-1}}}\left\Vert \varphi\right\Vert _{\infty}\\
&  \sum_{\beta^{j-1},\beta^{j+1}\in A_{0}\left(  \ell\right)  }\left(
t_{j}-t_{j-1}\right)  ^{\left[  \left(  \left\Vert \beta^{j-1}\right\Vert
-\left\Vert \alpha\right\Vert \right)  \vee0\right]  /2+\left[  \left(
\left\Vert \beta^{j+1}\right\Vert -\left\Vert \beta\right\Vert \right)
\vee0\right]  /2-\left(  \left\Vert \beta^{j-1}\right\Vert +\left\Vert
\beta^{j+1}\right\Vert \right)  /2}\\
&  \leq C^{k}\frac{1}{\sqrt{t_{1}-t_{0}}}\cdots\frac{1}{\sqrt{t_{k}-t_{k-1}}%
}t^{-\left(  \left\Vert \alpha\right\Vert +\left\Vert \beta\right\Vert
\right)  /2}\left\Vert \varphi\right\Vert _{\infty},
\end{align*}
where the penultimate inequality used Lemma \ref{kusuoka-lemma}.
\end{proof}

\section{Proof of Proposition \ref{main factorial bound}: Factorial decay of
the integral summands via rough path techniques
\label{factorial decay section}}

\subsection{Some preliminary estimates}

Before we can proceed with the proof of Proposition \ref{main factorial bound}
we explore some of the consequences of the estimates derived in the proof of
Proposition \ref{first a priori estimate}.

\begin{lemma}
\label{a priori estimate}With the notation of Lemma \ref{expansion lemma} for
any $0<\gamma<1/2,$ $m>0$ there exist random variables $c(\gamma,m,\omega
)\;$such that, almost surely
\begin{equation}
\left\Vert R_{s,t}^{m,\bar{\imath}}\right\Vert _{H^{-1}\rightarrow H^{-1}}\leq
c\left(  \gamma,m,\omega\right)  \left\vert t-s\right\vert ^{m\gamma}.
\label{ap1}%
\end{equation}%
\begin{equation}
\left\Vert R_{s,t}^{m,\bar{\imath}}\right\Vert _{H^{1}\rightarrow H^{1}}\leq
c\left(  \gamma,m,\omega\right)  \left\vert t-s\right\vert ^{m\gamma}.
\label{ap2}%
\end{equation}
and finally%
\begin{equation}
\left\Vert R_{s,t}^{m,\bar{\imath}}\right\Vert _{H^{-1}\rightarrow H^{1}}\leq
c\left(  \gamma,m,\omega\right)  \left\vert t-s\right\vert ^{m\gamma-2\ell}.
\label{ap3}%
\end{equation}
for all $\bar{\imath}\in S\left(  m\right)  ,$ $0<s<t<1$.
\end{lemma}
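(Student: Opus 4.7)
The plan is to derive all three estimates from the pathwise representation of $R_{s,t}^{m,\bar{\imath}}$ in Proposition \ref{ibp induction}, the H\"older control of the iterated integrals of $Y$ in Lemma \ref{iterated integral}, and the heat-kernel commutation machinery developed in Section \ref{section integral kernels}. By Proposition \ref{ibp induction} and the degree identity (\ref{degree}), every term in $R_{s,t}^{m,\bar{\imath}}(\varphi)$ is the product of a random factor bounded almost surely by $(c(\omega,\gamma)|t-s|)^{m\gamma}/(\theta(m\gamma)!)$ and an operator factor of the form
\[
\mathcal{T}(\varphi) = P_{t_1 - s}\bigl(\bar{\Phi}_1 P_{t_2 - t_1} \cdots \bar{\Phi}_k P_{t - t_k}(\varphi)\bigr),
\]
with $\bar{\Phi}_p \in \Gamma_{\bar{\jmath}_p}$. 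The task is to bound $\mathcal{T}$ as a linear operator between the appropriate pairs of spaces.

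Estimate (\ref{ap3}) follows directly from Proposition \ref{first a priori estimate}. For any decomposition $\varphi = \sum_{\beta \in A_0(\ell)} V_{[\beta]} \varphi_\beta$, the triangle inequality together with the pointwise a priori bound yields
\[
\|R_{s,t}^{m,\bar{\imath}} \varphi\|_{H^1} \le \sum_{\alpha,\beta \in A_0(\ell)} \|V_{[\alpha]} R_{s,t}^{m,\bar{\imath}} V_{[\beta]} \varphi_\beta\|_\infty \le c(\gamma,m,\omega)|t-s|^{m\gamma - \ell} \sum_\beta \|\varphi_\beta\|_\infty,
\]
using $\|\alpha\|,\|\beta\| \le \ell$. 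Taking the infimum over decompositions and using $|t-s| \le 1$ yields (\ref{ap3}), since the exponent $m\gamma - 2\ell$ is weaker than $m\gamma - \ell$. A minor point is that $\emptyset \in A_0(\ell)$; the bound in Proposition \ref{first a priori estimate} extends trivially by reading $V_{[\emptyset]}$ as the identity.

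Estimate (\ref{ap2}) is proved by commuting the outer derivative $V_{[\alpha]}$ into the chain $\mathcal{T}$ step by step, using Lemma \ref{forward first step} at each heat kernel and the Leibniz rule at each intermediate $\bar{\Phi}_p$. After traversing the chain, the derivative is applied to the innermost $\varphi$ in the form $V_{[\gamma]} \varphi$ for some $\gamma \in A_0(\ell)$, which is bounded by $\|\varphi\|_{H^1}$. Every commutation generates weights $\Phi$ lying in classes $\mathcal{K}_r$ with $r \ge 0$, so by Lemma \ref{kusuoka-lemma} each resulting heat kernel contributes at worst a factor of order $1$ in $|t-s|$. Multiplying by the $(c(\omega,\gamma)|t-s|)^{m\gamma}$ bound on the random factor and summing over the finite set $\alpha \in A_0(\ell)$ yields (\ref{ap2}).

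Estimate (\ref{ap1}) is proved dually. Given $\varphi = \sum_\beta V_{[\beta]}\varphi_\beta$, the innermost derivative $V_{[\beta]}$ adjacent to $\varphi_\beta$ inside $\mathcal{T}$ is moved to the outside via Proposition \ref{backward prop}, rewriting $\mathcal{T}(V_{[\beta]}\varphi_\beta) = \sum_\gamma V_{[\gamma]} \psi_{\beta,\gamma}$ where each $\psi_{\beta,\gamma}$ is a chain of heat kernels with weights in $\mathcal{K}_r$ for $r \ge 0$ applied to $\varphi_\beta$. Lemma \ref{kusuoka-lemma} then gives $\|\psi_{\beta,\gamma}\|_\infty \le C\|\varphi_\beta\|_\infty$, so $\sum_\gamma V_{[\gamma]} \psi_{\beta,\gamma}$ is an admissible decomposition of $R_{s,t}^{m,\bar{\imath}} V_{[\beta]}\varphi_\beta$ whose $H^{-1}$-norm is bounded by $C\|\varphi_\beta\|_\infty$. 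Multiplying by the random factor $(c(\omega,\gamma)|t-s|)^{m\gamma}$, summing over $\beta$ and infimizing over the decompositions of $\varphi$ gives (\ref{ap1}). The main technical obstacle in both (\ref{ap1}) and (\ref{ap2}) is to track carefully the weight classes $\mathcal{K}_r$ as the commutation identities are iterated, particularly when $\bar{\Phi}_p = \Psi_{\bar{\jmath}_p}$ contains a first-order derivative that must be absorbed into the chain; however, each such commutation preserves non-negative weight indices, so no temporal singularities appear beyond the $(c|t-s|)^{m\gamma}$ factor already extracted from the iterated integrals.
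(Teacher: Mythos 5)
Your high-level route matches the paper's: represent $R_{s,t}^{m,\bar\imath}$ via Proposition \ref{ibp induction}, bound the random iterated-integral coefficients by Lemma \ref{iterated integral}, and control the integral kernels $\bar R,\hat R$ by commuting the relevant vector field through the chain of weighted heat kernels (Lemma \ref{forward first step} / Lemma \ref{forward-intermediate} for (\ref{ap2}), Proposition \ref{backward prop} for (\ref{ap1})). Your derivation of (\ref{ap3}) from the already-established Proposition \ref{first a priori estimate} is clean and correct.

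There is, however, a genuine gap in your treatment of (\ref{ap1}) and (\ref{ap2}). You claim that ``every commutation generates weights $\Phi$ lying in classes $\mathcal{K}_r$ with $r\geq 0$'' and hence that ``each resulting heat kernel contributes at worst a factor of order $1$'' and ``no temporal singularities appear.'' This is false. Each intermediate operator $\bar\Phi_p$ coming from a $\Psi_{\bar j_p}$ contains a first-order vector field $V_{i}$, and absorbing it into the adjacent heat kernel costs one degree: Proposition \ref{backward prop} only guarantees $r_1,\ldots,r_{k-1}\geq -1/2$, and Lemma \ref{forward-intermediate} exhibits the corresponding singularity explicitly as the $\min(t^{-1/2},\ldots)$ factor. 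Concretely, the integral kernel bound is
\[
\big\Vert \bar R_{(t_0,t_1,\ldots,t_k,t)}^{m,\bar j_1,\ldots,\bar j_k}\varphi\big\Vert
\leq C^k\,\frac{1}{\sqrt{t_1-t_0}}\cdots\frac{1}{\sqrt{t_k-t_{k-1}}}\,\Vert\varphi\Vert_{H^{\pm1}},
\]
which is not singularity-free. These singularities are integrable over the simplex $\Delta^k_{s,t}$ and, after integration, produce a harmless factor $|t-s|^{k/2}$; but this requires the argument of Lemma \ref{help 2} together with Lemma \ref{help 1}, neither of which you invoke in your proofs of (\ref{ap1}) or (\ref{ap2}). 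Relatedly, you describe each term in $R_{s,t}^{m,\bar\imath}$ as a ``product'' of a random scalar and an operator $\mathcal{T}$, but by Proposition \ref{ibp induction} the terms are simplex integrals with random integrands; without the simplex-integration step your bound does not follow from the pointwise bound on the kernel. Supplying the Lemma \ref{help 1}/\ref{help 2} step and correcting the weight accounting to allow the $r\geq-1/2$ (i.e.\ $1/\sqrt{\cdot}$) singularities closes the gap and recovers exactly the paper's argument.
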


\begin{proof}
For all $\bar{j}_{1},\ldots,$ $\bar{j}_{k}\in S$ such that $\bar{\imath}%
=\bar{j}_{1}\ast...\ast\bar{j}_{k}$ we note that for any $0<t\leq1$ we have by
iteratively applying Lemma \ref{forward-intermediate}%

\begin{align*}
\left\Vert \bar{R}_{\left(  t_{0},t_{1},\ldots,t_{k},t\right)  }^{m,\bar
{j}_{1},...,\bar{j}_{k}}\varphi\right\Vert _{H^{1}}  &  =\sum_{\alpha\in
A_{0}\left(  \ell\right)  }\left\Vert V_{\left[  \alpha\right]  }\left(
\bar{R}_{\left(  t_{0},t_{1},\ldots,t_{k},t\right)  }^{m,\bar{j}_{1}%
,...,\bar{j}_{k}}\varphi\right)  \right\Vert _{\infty}\\
&  \leq C^{k}\frac{1}{\sqrt{t_{1}-t_{0}}}\cdots\frac{1}{\sqrt{t_{k}-t_{k-1}}%
}\sum_{\beta\in A_{0}\left(  \ell\right)  }\left\Vert V_{\left[  \beta\right]
}\varphi\right\Vert _{\infty}.
\end{align*}
The bound on $\left\Vert R_{s,t}^{m,\bar{\imath}}\left(  \varphi\right)
\right\Vert _{H^{1}\rightarrow H^{1}}$ now follows by applying Lemmas
\ref{help 1} and \ref{help 2}$.$ Finally to show inequalities $\left(
\ref{ap2}\right)  $ and $\left(  \ref{ap3}\right)  $ we let $\varphi\in
H^{-1}.$ Then there exist for every $\varepsilon>0$ functions $\varphi^{\beta
}$ such that%
\[
\varphi=\sum_{\beta\in A_{0}\left(  \ell\right)  }V_{\left[  \beta\right]
}\varphi^{\beta}%
\]
and $\sum_{\beta\in A_{0}\left(  \ell\right)  }\left\Vert \varphi^{\beta
}\right\Vert _{\infty}\leq\left\Vert \varphi\right\Vert _{H^{-1}}%
+\varepsilon.$ First we have%
\[
\left\Vert \bar{R}_{\left(  t_{0},t_{1},\ldots,t_{k},t\right)  }^{m,\bar
{j}_{1},...,\bar{j}_{k}}\varphi\right\Vert _{H^{-1}}=\left\Vert \sum_{\beta\in
A_{0}\left(  \ell\right)  }\bar{R}_{\left(  t_{0},t_{1},\ldots,t_{k},t\right)
}^{m,\bar{j}_{1},...,\bar{j}_{k}}V_{\left[  \beta\right]  }\varphi^{\beta
}\right\Vert _{H^{-1}}%
\]
and by Proposition \ref{backward prop} for each $\beta\in A_{0}\left(
\ell\right)  $ there exist functionals $\Phi_{\beta^{1}}\in\mathcal{K}_{r_{1}%
}, $ \ldots,$\Phi_{\beta^{k}}\in\mathcal{K}_{r_{k}}$ such that $r_{k}\geq0$,
$r_{1},\ldots r_{k-1}\geq-1/2$ and%
\[
\bar{R}_{\left(  t_{0},t_{1},\ldots,t_{k},t\right)  }^{m,\bar{j}_{1}%
,...,\bar{j}_{k}}V_{\left[  \beta\right]  }\varphi^{\beta}=\sum_{\beta\in
A_{0}\left(  \ell\right)  }\sum_{\beta^{1}\in A_{0}\left(  \ell\right)
}\cdots\sum_{\beta^{k}\in A_{0}\left(  \ell\right)  }V_{\left[  \beta
^{1}\right]  }P_{t_{1}-t_{0}}^{\Phi_{\beta^{1}}}P_{t_{2}-t_{1}}^{\Phi
_{\beta^{2}}}\cdots P_{t-t_{k}}^{\Phi_{\beta^{k}}}\varphi.
\]
We deduce from Lemma \ref{kusuoka-lemma} that
\begin{align*}
\left\Vert \bar{R}_{\left(  t_{0},t_{1},\ldots,t_{k},t\right)  }^{m,\bar
{j}_{1},...,\bar{j}_{k}}V_{\left[  \beta\right]  }\varphi^{\beta}\right\Vert
_{H^{-1}}  &  \leq\sum_{\beta\in A_{0}\left(  \ell\right)  }\sum_{\beta^{1}\in
A_{0}\left(  \ell\right)  }\cdots\sum_{\beta^{k}\in A_{0}\left(  \ell\right)
}\left\Vert P_{t_{1}-t_{0}}^{\Phi_{\beta^{1}}}P_{t_{2}-t_{1}}^{\Phi_{\beta
^{2}}}\cdots P_{t-t_{k}}^{\Phi_{\beta^{k}}}\varphi^{\beta}\right\Vert
_{\infty}\\
&  \leq C_{k}\frac{1}{\sqrt{t_{1}-t_{0}}}\cdots\frac{1}{\sqrt{t_{k}-t_{k-1}}%
}\left\Vert \varphi^{\beta}\right\Vert _{\infty}%
\end{align*}
and consequently
\begin{align*}
\left\Vert \bar{R}_{\left(  t_{0},t_{1},\ldots,t_{k},t\right)  }^{m,\bar
{j}_{1},...,\bar{j}_{k}}\varphi\right\Vert _{H^{-1}}  &  \leq c_{k}\frac
{1}{\sqrt{t_{1}-t_{0}}}\cdots\frac{1}{\sqrt{t_{k}-t_{k-1}}}\sum_{\beta\in
A_{0}\left(  \ell\right)  }\left\Vert \varphi^{\beta}\right\Vert _{\infty}\\
&  \leq c_{k}\frac{1}{\sqrt{t_{1}-t_{0}}}\cdots\frac{1}{\sqrt{t_{k}-t_{k-1}}%
}\left\Vert \varphi\right\Vert _{H^{-1}}+\varepsilon.
\end{align*}
To demonstrate the last inequality observe that arguing exactly as in the
proof of Proposition \ref{first a priori estimate} we have,
\begin{align*}
\left\Vert \bar{R}_{\left(  t_{0},t_{1},\ldots,t_{k},t\right)  }^{m,\bar
{j}_{1},...,\bar{j}_{k}}\varphi\right\Vert _{H^{1}}  &  =\sum_{\alpha\in
A_{0}\left(  \ell\right)  }\left\Vert V_{\left[  \alpha\right]  }\left(
\bar{R}_{\left(  t_{0},t_{1},\ldots,t_{k},t\right)  }^{m,\bar{j}_{1}%
,...,\bar{j}_{k}}\varphi\right)  \right\Vert _{\infty}\\
&  \leq\sum_{\alpha\in A_{0}\left(  \ell\right)  }\sum_{\beta\in A_{0}\left(
\ell\right)  }\left\Vert V_{\left[  \alpha\right]  }\left(  \bar{R}_{\left(
t_{0},t_{1},\ldots,t_{k},t\right)  }^{m,\bar{j}_{1},...,\bar{j}_{k}}V_{\left[
\beta\right]  }\varphi^{\beta}\right)  \right\Vert _{\infty}%
\end{align*}%
\begin{align*}
&  \leq c_{k}\frac{1}{\sqrt{t_{1}-t_{0}}}\cdots\frac{1}{\sqrt{t_{k}-t_{k-1}}%
}\sum_{\alpha\in A_{0}\left(  \ell\right)  }\sum_{\beta\in A_{0}\left(
\ell\right)  }t^{-\left(  \left\Vert \alpha\right\Vert +\left\Vert
\beta\right\Vert \right)  /2}\left\Vert \varphi^{\beta}\right\Vert _{\infty}\\
&  \leq c_{k}\frac{1}{\sqrt{t_{1}-t_{0}}}\cdots\frac{1}{\sqrt{t_{k}-t_{k-1}}%
}t^{-2\ell}\sum_{\beta\in A_{0}\left(  \ell\right)  }\left\Vert \varphi
^{\beta}\right\Vert _{\infty}\\
&  \leq c_{k}\frac{1}{\sqrt{t_{1}-t_{0}}}\cdots\frac{1}{\sqrt{t_{k}-t_{k-1}}%
}t^{-2\ell}\left\Vert \varphi\right\Vert _{H^{-1}}+\varepsilon,
\end{align*}
where $c_{k}$ are constants changing from line to line. The claim in both
cases now follows once again from Lemmas \ref{help 1} and \ref{help 2}$.$ . As
before we note that the same estimates apply to $\hat{R}_{\left(  t_{0}%
,t_{1},\ldots,t_{k},t\right)  }^{m,\bar{j}_{1},...,\bar{j}_{k}}$ in place of
$\bar{R}_{\left(  t_{0},t_{1},\ldots,t_{k},t\right)  }^{m,\bar{j}_{1}%
,...,\bar{j}_{k}}.$
\end{proof}

So far, we have established a priori H\"{o}lder type estimates for
$R_{s,t}^{m,\bar{\imath}}\left(  \varphi\right)  ,$ but the estimates in their
current form are not yet summable. The following proof of Proposition
\ref{main factorial bound} relies on a fundamental rough path technique to
improve on these bounds and demonstrate that the operator norms of
$R_{s,t}^{m,\bar{\imath}}$ decay in fact factorially in $m.$

\subsection{Proof of Proposition \ref{main factorial bound}}

To make the presentation more transparent we introduce some additional
notations for the following arguments. Recall that $\Delta_{s,t}^{k}$ denotes
the simplex defined by the relation $s<t_{1}<\cdots<t_{k}<t$ and the $H_{i}$
are the operators corresponding to multiplication by the sensor function
$h_{i}.$ For any $0\leq s<t\leq T$ define $R_{s,t}^{\emptyset}:=1$ and recall
the linear operators $R_{s,t}^{n,\bar{\imath}}$ may be written as%
\[
R_{s,t}^{n,\bar{\imath}}=\int_{\Delta_{s,t}^{k}}P_{t_{1}-s}H_{i_{1}}%
P_{t_{2}-t_{1}}H_{i_{2}}\cdots H_{i_{n}}P_{t-t_{n}}dY_{t_{1}}^{i_{1}}\cdots
dY_{t_{n}}^{i_{n}}.
\]
for all $\bar{\imath}=\left(  i_{1},\ldots,i_{n}\right)  \in S$, $n\geq$ $1.$

\bigskip

Let $W:=\mathbb{R}^{d_{2}}$ and $\varepsilon_{1},\ldots,\varepsilon_{d_{2}}$ a
Basis for $W.$ For $\bar{\imath}=\left(  i_{1},\ldots i_{j}\right)  \in
S\left(  j\right)  ~\ $let $\varepsilon_{\bar{\imath}}=\varepsilon_{i_{1}%
}\otimes\cdots\otimes\varepsilon_{i_{j}}$ and note that the $\varepsilon
_{\bar{\imath}}$ are a basis for the space $W^{\otimes j}.$ Finally, let $V$
be a Banach algebra (i.e. a Banach space with a multiplication and a
submultiplicative norm). We define $\mathcal{P}_{d_{2},k}\left(  V\right)  $
the space of non-commutative polynomials in $d_{2}$ variables of degree at
most $k$ over $V$ by letting%
\[
\mathcal{P}_{d_{2},k}\left(  V\right)  :=\left\{  \sum_{j=0}^{k}\sum
_{\bar{\imath}\in S\left(  j\right)  }c_{\bar{\imath}}\varepsilon_{\bar
{\imath}}:c_{\bar{\imath}}\in V\right\}  .
\]
Define a multiplication for $a=\sum_{j=0}^{k}a_{j},$ $a_{j}=\sum_{\bar{\imath
}\in S\left(  j\right)  }a_{\bar{\imath}}\varepsilon_{\bar{\imath}}$ and
$b=\sum_{j=0}^{k}b_{j},$ $b_{j}=\sum_{\bar{\imath}\in S\left(  j\right)
}b_{\bar{\imath}}\varepsilon_{\bar{\imath}}$ by setting%
\begin{equation}
ab:=\sum_{v=0}^{k}\sum_{j=0}^{v}a_{j}b_{v-j}:=\sum_{v=0}^{k}\sum_{j=0}^{v}%
\sum_{\bar{\imath}\in S\left(  j\right)  }\sum_{\bar{l}\in S\left(
v-j\right)  }a_{\bar{\imath}}b_{\bar{l}}\varepsilon_{\bar{\imath}\ast\bar{l}}.
\label{mult def}%
\end{equation}
Further note that%
\begin{equation}
\sum_{j=0}^{v}\sum_{\bar{\imath}\in S\left(  j\right)  }\sum_{\bar{l}\in
S\left(  v-j\right)  }a_{\bar{\imath}}b_{\bar{l}}\varepsilon_{\bar{\imath}%
\ast\bar{l}}=\sum_{\bar{\imath}\in S\left(  v\right)  }\sum_{\bar{m}\ast
\bar{l}=\bar{\imath}}a_{\bar{m}}b_{\bar{l}}\varepsilon_{\bar{m}\ast\bar{l}}
\label{helper eq}%
\end{equation}
and define for $k\geq i\geq1$ the projection $\pi_{i}$ by setting $\pi
_{i}\left(  a\right)  =a_{i}.$ We impose a norm on $\mathcal{P}_{d_{2}%
,k}\left(  V\right)  $ by setting
\[
\left\Vert \sum_{j=0}^{k}\sum_{\bar{\imath}\in S\left(  j\right)  }%
c_{\bar{\imath}}\varepsilon_{\bar{\imath}}\right\Vert =\sup\left\{  \left\Vert
c_{\bar{\imath}}\right\Vert :j\in\left\{  0,\ldots,k\right\}  ,\bar{\imath}\in
S\left(  j\right)  \right\}
\]
Let $Q_{s,t}^{0}=1$ and $Q_{s,t}^{j}$ for $j\in\mathbb{N}$ be given by
\[
Q_{s,t}^{j}=\sum_{\bar{\imath}\in S\left(  j\right)  }R_{s,t}^{j,\bar{\imath}%
}\varepsilon_{\bar{\imath}}.
\]
Finally, we may set
\[
Q_{s,t}^{\left[  n\right]  }=\sum_{i=0}^{n}Q_{s,t}^{i}.
\]

\bigskip

Observe that for any $s<u<t$ and $k\in\mathbb{N}$ and $\bar{\imath}=\left(
i_{1},\ldots i_{k}\right)  \in S\left(  k\right)  ,$ we have partitioning the
simplex $\Delta_{s,t}^{k}$%
\begin{align}
R_{s,t}^{k,\bar{\imath}}  &  =\int_{\Delta_{s,u}^{k}}P_{t_{1}-s}H_{i_{1}%
}P_{t_{2}-t_{1}}H_{i_{2}}\cdots H_{i_{k}}P_{t-t_{k}}dY_{t_{1}}^{i_{1}}\cdots
dY_{t_{k}}^{i_{k}}\nonumber\\
&  +\int_{\Delta_{u,t}^{k}}P_{t_{1}-s}H_{i_{1}}P_{t_{2}-t_{1}}H_{i_{2}}\cdots
H_{i_{k}}P_{t-t_{k}}dY_{t_{1}}^{i_{1}}\cdots dY_{t_{k}}^{i_{k}}\nonumber\\
&  +\sum_{j=1}^{k-1}\int_{\Delta_{s,u}^{k}}P_{t_{1}-s}H_{i_{1}}\cdots
P_{t_{j}-t_{j-1}}H_{i_{j}}P_{u-t_{j}}dY_{t_{1}}^{i_{1}}\cdots dY_{t_{j}%
}^{i_{j}}\nonumber\\
&  \int_{\Delta_{u,t}^{k-j}}P_{t_{j+1}-u}H_{i_{j+1}}P_{t_{j+2}-t_{j+1}}\cdots
H_{i_{k}}P_{t-t_{k}}dY_{t_{j+1}}^{i_{j+1}}\cdots dY_{t_{k}}^{i_{k}}\nonumber\\
&  =\sum_{j=0}^{k}R_{s,u}^{j,\left(  i_{1},\ldots i_{j}\right)  }%
R_{u,t}^{k-j,\left(  i_{j+1},\ldots i_{k}\right)  }\nonumber\\
&  =\sum_{\bar{m}\ast\bar{l}=\bar{\imath}}R_{s,u}^{\left\vert \bar
{m}\right\vert ,\bar{m}}R_{u,t}^{\left\vert \bar{l}\right\vert ,\bar{l}}
\label{mult property prep}%
\end{align}
and therefore using $\left(  \ref{helper eq}\right)  $
\begin{align*}
Q_{s,t}^{\left[  k\right]  }  &  =\sum_{v=0}^{k}\sum_{\bar{\imath}\in S\left(
v\right)  }R_{s,t}^{v,\bar{\imath}}\varepsilon_{\bar{\imath}}\\
&  =\sum_{v=0}^{k}\sum_{\bar{\imath}\in S\left(  v\right)  }\sum_{\bar{m}%
\ast\bar{l}=\bar{\imath}}R_{s,u}^{\left\vert \bar{m}\right\vert ,\bar{m}%
}R_{u,t}^{\left\vert \bar{l}\right\vert ,\bar{l}}\varepsilon_{\bar{m}%
}\varepsilon_{\bar{l}}\\
&  =\sum_{v=0}^{k}\sum_{j=0}^{v}\sum_{\bar{\imath}\in S\left(  j\right)  }%
\sum_{\bar{l}\in S\left(  v-j\right)  }R_{s,u}^{j,\bar{\imath}}R_{u,t}%
^{v-j,\bar{l}}\varepsilon_{\bar{\imath}\ast\bar{l}}\\
&  =\sum_{v=0}^{k}\sum_{j=0}^{v}Q_{s,u}^{j}Q_{u,t}^{v-j}%
\end{align*}
or equivalently
\begin{equation}
Q_{s,t}^{\left[  k\right]  }=Q_{s,u}^{\left[  k\right]  }Q_{u,t}^{\left[
k\right]  }. \label{multiplicative property}%
\end{equation}
Analogous to the corresponding rough path concept we will refer to $\left(
\ref{multiplicative property}\right)  $ as the multiplicative property. We
recall that by Lemma \ref{expansion lemma}
\begin{equation}
\rho_{t}=P_{t}+\sum_{n=1}^{\infty}\sum_{\bar{\imath}\in S\left(  n\right)
}R_{0,t}^{n,\bar{\imath}}. \label{series representation}%
\end{equation}

The following proposition demonstrates that it suffices to obtain Holder type
controls on finitely many of the $Q_{s,t}^{n}$ to control the infinite series
in $\left(  \ref{series representation}\right)  $ . The proof utilises
techniques of the classical extension theorem for rough paths due to Lyons
(see e.g. \cite{LCL} p.45f) and exploits the multiplicative structure of the
operator valued integrands.

\begin{lemma}
\label{holder control}Let $q\geq1$ and let $\lfloor q\rfloor$ denote the
integer part of $q$ and $V$ be a Banach algebra with norm $\left\Vert
\cdot\right\Vert .$ Suppose $Q^{\left[  \lfloor q\rfloor\right]  }=\sum
_{j=0}^{\lfloor q\rfloor}Q^{j}\in\mathcal{P}_{d_{2},\lfloor q\rfloor}\left(
V\right)  $ satisfies the multiplicative property $\left(
\ref{multiplicative property}\right)  $. Suppose there exists a constant $C>0$
such that for all $\left(  s,t\right)  \in\Delta_{\left[  0,1\right]  },$
$j=1,\cdots\lfloor q\rfloor,$
\begin{equation}
\left\Vert Q_{s,t}^{j}\right\Vert \leq\frac{\left(  C\left\vert t-s\right\vert
\right)  ^{j/q}}{\theta\left(  j/q\right)  !}, \label{holder bound}%
\end{equation}
where $\theta=\left(  q^{2}+\sum_{r=3}^{\infty}\left(  \frac{2}{r-2}\right)
^{\frac{\lfloor q\rfloor+1}{q}}\right)  .$Then for all $m>\lfloor q\rfloor$
there exists a multiplicative extension $1+$ $Q_{s,t}^{1}$ $+\cdots
+Q_{s,t}^{\lfloor q\rfloor}+\widetilde{Q}_{s,t}^{\lfloor q\rfloor+1}+$
$\cdots+\widetilde{Q}_{s,t}^{m}$ $\ $on $\mathcal{P}_{d_{2},m}\left(
V\right)  $ such that $\left(  \ref{holder bound}\right)  $ holds for all
$j\in\left\{  1,\ldots,m\right\}  ,$ $\left(  s,t\right)  \in\Delta_{\left[
0,1\right]  }^{2}$. Moreover if $\overline{Q}_{s,t}^{j}$ is another
multiplicative extension such that $\left\Vert \overline{Q}_{s,t}%
^{j}\right\Vert \leq C\left(  j\right)  \left(  \left\vert t-s\right\vert
\right)  ^{j/q}$ for all $\left(  s,t\right)  \in\Delta_{\left[  0,1\right]
}^{2}$, then $\overline{Q}_{s,t}^{j}=\widetilde{Q}_{s,t}^{j}$ for all
$j\in\left\{  1,\ldots,m\right\}  $.
\end{lemma}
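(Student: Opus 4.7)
This is a functional-analytic avatar of Lyons' extension theorem for multiplicative functionals, carried out in the polynomial algebra $\mathcal{P}_{d_2,m}(V)$ rather than the truncated tensor algebra. I would argue by induction on the level $n = \lfloor q \rfloor + 1, \ldots, m$, assuming the extension $\widetilde{Q}^{[n-1]}$ has been constructed and satisfies the stated bound. For each dissection $D = \{s = u_0 < u_1 < \cdots < u_r = t\}$ of $[s,t]$, define the candidate partition sum
$$\widetilde{Q}^n(D)_{s,t} := \pi_n\left(\prod_{k=1}^r \widetilde{Q}^{[n-1]}_{u_{k-1}, u_k}\right),$$
and let $\widetilde{Q}^n_{s,t}$ be the limit of $\widetilde{Q}^n(D)_{s,t}$ as the mesh of $D$ tends to zero. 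The partition sum captures, via the multiplication rule (\ref{mult def}), precisely those length-$n$ products of lower-level increments that are not already present at the level of a single interval.

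\textbf{Core estimate.} The heart of the argument is the drop-one-point estimate: if $u_i$ is an interior point of $D$, then using the inductive multiplicativity of $\widetilde{Q}^{[n-1]}$, submultiplicativity of the norm on $V$, and the inductive Hölder bounds,
$$\bigl\| \widetilde{Q}^n(D)_{s,t} - \widetilde{Q}^n(D \setminus \{u_i\})_{s,t} \bigr\| \leq \sum_{\substack{j_1 + j_2 = n \\ j_1, j_2 \geq 1}} \frac{(C|u_{i+1} - u_{i-1}|)^{n/q}}{\theta^2\,(j_1/q)!\,(j_2/q)!}.$$
Since $n/q > 1$ (as $n > \lfloor q \rfloor$), iteratively removing the point minimising $|u_{i+1} - u_{i-1}|$, which is at most $2|t-s|/(r-2)$ by pigeonhole, shows that refinements of $D$ form a Cauchy net and produces the accumulated bound
$$\|\widetilde{Q}^n_{s,t}\| \leq \frac{(C|t-s|)^{n/q}}{\theta\,(n/q)!}$$
provided $\theta$ dominates the total contribution. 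The explicit value $\theta = q^2 + \sum_{r\geq 3}(2/(r-2))^{(\lfloor q \rfloor + 1)/q}$ is calibrated for exactly this: the $q^2$ term absorbs the finite base-level binomial contributions, while the tail series dominates the refinement sum uniformly in $n > \lfloor q \rfloor$. Multiplicativity of $\widetilde{Q}^{[n]}$ at level $n$ passes to the limit from compatibility of partition sums under concatenation of two dissections, using the algebraic identity (\ref{mult property prep}).

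\textbf{Uniqueness and main obstacle.} For uniqueness, at the first level $j > \lfloor q \rfloor$ where two multiplicative extensions $\bar{Q}^j$ and $\widetilde{Q}^j$ could differ, the difference $\delta_{s,t} := \bar{Q}^j_{s,t} - \widetilde{Q}^j_{s,t}$ satisfies $\delta_{s,t} = \delta_{s,u} + \delta_{u,t}$ (the cross-terms from the multiplicativity identity cancel since all lower levels coincide by induction), and $\delta$ is Hölder of exponent $j/q > 1$; the classical Young argument then forces $\delta \equiv 0$. The main technical obstacle is the bookkeeping required to obtain a \emph{uniform} constant $\theta$ valid at every level $j$, rather than the level-dependent constants produced by a naive iteration. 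The explicit value of $\theta$ and the factorial growth of $(j/q)!$ are precisely what make this uniformity possible, and hence what will later enable the geometric summability used in the proof of Proposition~\ref{main factorial bound} in combination with (\ref{series representation}).
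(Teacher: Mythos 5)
Your proposal follows the same blueprint as the paper: induction on the level $n$ starting at $\lfloor q\rfloor+1$, partition sums $\prod_{\mathcal D}\widehat Q_{t_i,t_{i+1}}$, a drop-one-point coarsening controlled by the pigeonhole principle, convergence of the partition sums via the Cauchy property as the mesh tends to zero, and uniqueness by observing that the level-$j$ difference of two extensions agreeing below level $j$ is additive and H\"older of exponent $j/q>1$, hence zero. This is precisely the argument in the paper, which itself transcribes Lyons' extension theorem into the operator-valued polynomial algebra.

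There is, however, a genuine gap in your ``core estimate.'' You bound the drop-one-point difference by
\[
\sum_{j_1+j_2=n,\,j_1,j_2\ge 1}\frac{(C|u_{i+1}-u_{i-1}|)^{n/q}}{\theta^2\,(j_1/q)!\,(j_2/q)!},
\]
which is obtained by replacing both factors $|u_i-u_{i-1}|^{j_1/q}$ and $|u_{i+1}-u_i|^{j_2/q}$ by $|u_{i+1}-u_{i-1}|^{n/q}$. This throws away the convexity of the split, and the remaining combinatorial sum satisfies only $\sum_{j_1+j_2=n}\frac{1}{(j_1/q)!(j_2/q)!}\le q^2\,2^{n/q}/(n/q)!$, introducing an extra $2^{n/q}$. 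Carried into the coarsening sum, this replaces $(2/(r-1))^{n/q}$ by $(4/(r-1))^{n/q}$, and the first few terms ($r=2,3$) then grow like $2^{n/q}$ instead of staying $O(1)$: the total contribution diverges with $n$ and no fixed $\theta$ can be found, so the induction does not close. What rescues the argument in the paper is Lyons' neo-classical inequality (stated there as the Theorem immediately before the proof): for $s,t\ge 0$,
\[
\frac{1}{q^2}\sum_{i=0}^n\frac{s^{i/q}\,t^{(n-i)/q}}{(i/q)!\,((n-i)/q)!}\le\frac{(s+t)^{n/q}}{(n/q)!},
\]
applied with $s=|u_i-u_{i-1}|$, $t=|u_{i+1}-u_i|$, so that $s+t=|u_{i+1}-u_{i-1}|$ appears \emph{without} the spurious $2^{n/q}$. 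This is what produces the $q^2$ factor that you gesture at when saying the $q^2$ term ``absorbs the binomial contributions,'' and it is indispensable for the level-independent choice of $\theta$. Naming and using the neo-classical inequality, rather than the coarse $\max$-bound, is not an optimisation — it is the step on which the whole maximal inequality rests.
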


Before we begin the proof of the lemma we recall the neo-classical inequality
from \cite{L} (Lemma 2.2.2).

\begin{theorem}
[Neo-classical inequality, Lyons 98]For any $q\in\lbrack1,\infty),$
$n\in\mathbb{N}$ and $s,t\geq0$%
\[
\frac{1}{q^{2}}\sum_{i=0}^{n}\frac{s^{\frac{i}{q}}t^{\frac{n-i}{q}}}{\left(
\frac{i}{q}\right)  !\left(  \frac{n-i}{q}\right)  !}\leq\frac{\left(
s+t\right)  ^{n/q}}{\left(  n/q\right)  !}.
\]

\end{theorem}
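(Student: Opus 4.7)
The plan is to prove the lemma by induction on the level $j$, starting at $j=\lfloor q\rfloor+1$ and extending one level at a time up to $j=m$. The construction at each new level follows the strategy of the extension theorem for multiplicative functionals in rough path theory (cf.\ \cite{L,LCL}), but the bookkeeping must be arranged so that the $\theta$-dependent bound in $(\ref{holder bound})$ propagates to all higher levels.

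For the existence part I would fix $j>\lfloor q\rfloor$ and, assuming the multiplicative extension $\widetilde Q^{[j-1]}_{s,t}=1+Q^1_{s,t}+\cdots+\widetilde Q^{j-1}_{s,t}$ has already been constructed with
\[
\|\widetilde Q^{i}_{s,t}\|\leq \frac{(C|t-s|)^{i/q}}{\theta\,(i/q)!},\qquad 1\leq i\leq j-1,
\]
define, for each partition $D=\{s=s_0<s_1<\cdots<s_N=t\}$ of $[s,t]$, the Riemann-type object
\[
\widetilde Q^{j}_{s,t}(D):=\pi_{j}\!\left(\widetilde Q^{[j-1]}_{s_0,s_1}\,\widetilde Q^{[j-1]}_{s_1,s_2}\cdots \widetilde Q^{[j-1]}_{s_{N-1},s_N}\right),
\]
where the product is computed in $\mathcal P_{d_2,j}(V)$. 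The multiplicativity at levels $\leq j-1$ makes the lower projections independent of $D$; only the $\pi_j$-projection depends on $D$. Following the proof of the extension theorem, I would compare $\widetilde Q^{j}_{s,t}(D)$ to $\widetilde Q^{j}_{s,t}(D\setminus\{s_l\})$ obtained by deleting a single interior point: the difference equals
\[
\sum_{k=1}^{j-1}\widetilde Q^{k}_{s_{l-1},s_l}\,\widetilde Q^{j-k}_{s_l,s_{l+1}},
\]
which by the induction hypothesis is bounded by $\sum_{k=1}^{j-1}\theta^{-2}C^{j/q}|s_l-s_{l-1}|^{k/q}|s_{l+1}-s_l|^{(j-k)/q}/[(k/q)!((j-k)/q)!]$. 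Applying the neo-classical inequality term-by-term and summing over a well-chosen sequence of removals (pick at each step the point $s_l$ whose two neighbouring intervals have smallest combined length, so that the removed interval has length $\leq 2|t-s|/(N-1)$) yields the telescoping bound $\sum_{N\geq 2}(2/(N-2))^{(\lfloor q\rfloor+1)/q}$ of cross-term contributions. This is precisely the second summand in $\theta$, and ensures that $\{\widetilde Q^{j}_{s,t}(D)\}$ is Cauchy along any refining sequence of partitions with mesh tending to zero.

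Defining $\widetilde Q^{j}_{s,t}$ as this common limit, I would verify two properties. First, the Hölder bound $\|\widetilde Q^{j}_{s,t}\|\leq (C|t-s|)^{j/q}/(\theta(j/q)!)$ follows by combining the one-interval estimate $\|\widetilde Q^{j}_{s,t}(\{s,t\})\|\leq C^{j/q}|t-s|^{j/q}/(\theta^2(j/q)!)$ (inherited from a single application of the neo-classical inequality to the polynomial product truncated at level $j$) with the total refinement correction just computed; the factor $q^2$ in $\theta$ is exactly what absorbs the neo-classical constant, while the tail $\sum_{r\geq 3}(2/(r-2))^{(\lfloor q\rfloor+1)/q}$ absorbs the refinement corrections. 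Second, the multiplicative property $\widetilde Q^{[j]}_{s,t}=\widetilde Q^{[j]}_{s,u}\widetilde Q^{[j]}_{u,t}$ at level $j$ follows because any partition of $[s,t]$ containing $u$ splits as the concatenation of partitions of $[s,u]$ and $[u,t]$, so $\widetilde Q^{j}_{s,t}(D)=\pi_j(\widetilde Q^{[j-1]}_{s,u}(D_1)\,\widetilde Q^{[j-1]}_{u,t}(D_2))$ passes to the limit on both sides.

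For uniqueness, if $\overline Q^{j}_{s,t}$ is another multiplicative extension with $\|\overline Q^{j}_{s,t}\|\leq C(j)|t-s|^{j/q}$, the difference $\Delta_{s,t}:=\overline Q^{j}_{s,t}-\widetilde Q^{j}_{s,t}$ satisfies $\Delta_{s,t}=\Delta_{s,u}+\Delta_{u,t}$ (additivity follows because the lower levels agree, so all cross-terms in the multiplicative property cancel), while $\|\Delta_{s,t}\|=o(|t-s|)$ since $j/q>1$; an additive function with super-linear modulus of continuity is identically zero, giving $\overline Q^{j}=\widetilde Q^{j}$, and induction on $j$ completes the proof. The main technical obstacle is the calibration of the constant $\theta$: one must arrange the one-step neo-classical estimate and the refinement telescoping so that each iterate of the inductive step \emph{reproduces} the same $\theta$ in the denominator rather than picking up a growing multiplicative factor; this is achieved by the particular choice of $\theta=q^2+\sum_{r\geq 3}(2/(r-2))^{(\lfloor q\rfloor+1)/q}$, which is precisely large enough to close the induction.
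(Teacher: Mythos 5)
Your proposal does not address the statement it is meant to prove. The statement in question is the neo-classical inequality
\[
\frac{1}{q^{2}}\sum_{i=0}^{n}\frac{s^{i/q}\,t^{(n-i)/q}}{\bigl(\tfrac{i}{q}\bigr)!\,\bigl(\tfrac{n-i}{q}\bigr)!}\leq\frac{(s+t)^{n/q}}{(n/q)!},
\]
which is a purely analytic inequality about generalized binomial sums and the Gamma function, quoted by the paper from Lyons \cite{L} (Lemma 2.2.2) without proof. What you have written instead is a proof sketch of Lemma \ref{holder control}, the extension lemma for multiplicative functionals in the algebra $\mathcal P_{d_2,k}(V)$: you construct $\widetilde Q^{j}_{s,t}$ by refining partitions, calibrate the constant $\theta$, verify the H\"older bound and multiplicativity, and prove uniqueness by additivity plus super-linear modulus. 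That is an \emph{application} of the neo-classical inequality (you invoke it explicitly in the refinement estimate), not a proof of it.

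A proof of the neo-classical inequality itself would have to proceed along entirely different lines -- for instance by writing the left side as $\frac{1}{q^2}\sum_i \frac{\Gamma(n/q+1)}{\Gamma(i/q+1)\Gamma((n-i)/q+1)}\,s^{i/q}t^{(n-i)/q}/\Gamma(n/q+1)$, comparing the discrete sum to a Beta-function integral representation of $(s+t)^{n/q}$, and controlling the ratio of Gamma functions uniformly in $q\in[1,\infty)$. None of your partition/extension machinery can produce the binomial-theorem-type comparison that is the content of the statement, so there is no way to salvage the proposal toward the intended goal; you would need to start fresh with an analytic argument.
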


\begin{proof}
[Proof of Lemma \ref{holder control}]\bigskip We will inductively construct
$Q_{s,t}^{\left[  n\right]  }$ for $n>\lfloor q\rfloor$ , the base case of the
induction following from the assumption on the $Q_{s,t}^{j},j=1,\ldots,\lfloor
q\rfloor$. The proof closely follows the proof of the classical extension
theorem for rough paths (see \cite{LCL} p.45f). To extend from $n-1\geq\lfloor
q\rfloor$ to $n$ first let on $\mathcal{P}_{d_{2},n}\left(  V\right)  $
\[
\widehat{Q}_{s,t}:=\sum_{j=1}^{n-1}Q_{s,t}^{j}.
\]
Given any finite partition $\mathcal{D}$ of the interval $\left[  s,t\right]
$ define $Q_{s,t}^{\left[  n\right]  ,\mathcal{D}}$ by setting%
\[
Q_{s,t}^{\left[  n\right]  ,\mathcal{D}}:=%
{\displaystyle\prod\limits_{\mathcal{D}}}
\widehat{Q}_{t_{i},t_{i+1}}.
\]
By the pigeon hole principle there is $t_{j}$ such that
\[
\left(  t_{j+1}-t_{j-1}\right)  \leq\frac{2}{\left\vert \mathcal{D}\right\vert
-1}\left(  t-s\right)
\]
and we may coarsen the partition by dropping $t_{j}$ and write $\mathcal{D}%
^{\prime}:=\mathcal{D\setminus}\left\{  t_{j}\right\}  .$ Then
\[
Q_{s,t}^{\left[  n\right]  ,\mathcal{D}}-Q_{s,t}^{\left[  n\right]
,\mathcal{D}^{\prime}}=\widehat{Q}_{s,t_{1}}\cdots\left(  \widehat{Q}%
_{t_{j-1},t_{j}}\widehat{Q}_{t_{j},t_{j+1}}-\widehat{Q}_{t_{j-1},t_{j+1}%
}\right)  \cdots\widehat{Q}_{t_{\left\vert \mathcal{D}\right\vert -1},t}%
\]
and noting that $\widehat{Q}_{t_{j-1},t_{j}}\widehat{Q}_{t_{j},t_{j+1}%
}-\widehat{Q}_{t_{j-1},t_{j+1}}$ is a homogeneous polynomial of degree $n$ we
see that%
\[
Q_{s,t}^{\left[  n\right]  ,\mathcal{D}}-Q_{s,t}^{\left[  n\right]
,\mathcal{D}^{\prime}}=\sum_{i=1}^{n-1}Q_{t_{j-1},t_{j}}^{i}Q_{t_{j},t_{j+1}%
}^{n-i}.
\]
Therefore {}using the submultiplicative property for the norm, the inductive
hypothesis and finally the neo-classical inequality we see that%
\begin{align}
\left\Vert \pi_{n}\left(  Q_{s,t}^{\left[  n\right]  ,\mathcal{D}}%
-Q_{s,t}^{\left[  n\right]  ,\mathcal{D}^{\prime}}\right)  \right\Vert  &
=\left\Vert \sum_{i=1}^{n-1}Q_{t_{j-1},t_{j}}^{i}Q_{t_{j},t_{j+1}}%
^{n-i}\right\Vert \leq\sum_{i=1}^{n-1}\left\Vert Q_{t_{j-1},t_{j}}%
^{i}\right\Vert \left\Vert Q_{t_{j},t_{j+1}}^{n-i}\right\Vert
\label{will be modified}\\
&  \leq\sum_{i=1}^{n-1}\left(  \frac{\left(  C\left\vert t_{j}-t_{j-1}%
\right\vert \right)  ^{i/q}}{\theta\left(  i/q\right)  !}\right)  \left(
\frac{\left(  C\left\vert t_{j+1}-t_{j}\right\vert \right)  ^{\left(
n-i\right)  /q}}{\theta\left(  \left(  n-i\right)  /q\right)  !}\right)
\nonumber\\
&  \leq\frac{q^{2}}{\theta}\left(  \frac{2}{\left\vert \mathcal{D}\right\vert
-1}\right)  ^{\frac{n}{q}}\frac{\left(  C\left\vert t-s\right\vert \right)
^{\frac{n}{q}}}{\theta\left(  n/q\right)  !}.\nonumber
\end{align}
Successively dropping points from the partition until $\mathcal{D=}\left\{
s,t\right\}  $ we see that%
\[
\left\Vert \pi_{n}\left(  Q_{s,t}^{\left[  n\right]  ,\mathcal{D}}%
-\widehat{Q}_{s,t}^{{}}\right)  \right\Vert \leq\frac{q^{2}}{\theta}\left(
1+2^{n/q}\left(  \zeta\left(  \frac{\lfloor q\rfloor+1}{q}\right)  -1\right)
\right)  \frac{\left(  C\left\vert t-s\right\vert \right)  ^{\frac{n}{q}}%
}{\theta\left(  n/q\right)  !}.
\]
Thus whenever $\theta\geq q^{2}\left(  1+2^{n/q}\left(  \zeta\left(
\frac{\lfloor q\rfloor+1}{q}\right)  -1\right)  \right)  $ the maximal
inequality implies that%
\[
\left\Vert \pi_{n}\left(  Q_{s,t}^{\left[  n\right]  ,\mathcal{D}}\right)
^{n}\right\Vert \leq\frac{\left\vert t-s\right\vert ^{\frac{n}{q}}}%
{\theta\left(  n/q\right)  !}%
\]
holds for any partition of $\left[  s,t\right]  .$ It remains to verify the
existence of the limit $\lim_{\left\vert \mathcal{D}\right\vert \rightarrow
0}Q_{s,t}^{n,\mathcal{D}}.$ We proceed as in \cite{LCL} and exhibit the Cauchy
property for the sequence. Suppose $\mathcal{D=}\left(  t_{j}\right)  $ and
$\widetilde{\mathcal{D}}$ are two partitions of mesh size less than $\delta.$
Let $\widehat{\mathcal{D}}$ denote the common refinement of the two partitions
and let $\widehat{\mathcal{D}}_{j}=\left[  t_{j},t_{j+1}\right]
\cap\widetilde{\mathcal{D}}$ . Then%
\[
Q_{s,t}^{n,\widehat{\mathcal{D}}}-Q_{s,t}^{n,\mathcal{D}}=\sum Q_{t_{0},t_{1}%
}^{n,\widehat{\mathcal{D}}_{0}}\dots Q_{t_{j-1},t_{j}}^{n,\widehat{\mathcal{D}%
}_{j-1}}\left(  Q_{t_{j},t_{j+1}}^{n,\widehat{\mathcal{D}}_{j}}-\widehat{Q}%
_{t_{j},t_{j+1}}\right)  \dots Q_{t_{\left\vert \mathcal{D}\right\vert -1}%
,t}^{n,\widehat{\mathcal{D}}_{j}}.
\]
As seen before this is a sum of homogeneous polynomials of degree $n$ and by
the maximal inequality%
\[
\left\Vert \pi_{n}\left(  Q_{s,t}^{n,\widehat{\mathcal{D}}}-Q_{s,t}%
^{n,\mathcal{D}}\right)  \right\Vert \leq\sum_{\mathcal{D}}\frac{\left\vert
t_{j+1}-t_{j}\right\vert ^{\frac{n}{q}}}{\theta\left(  n/q\right)  !}\leq
\frac{\left\vert t-s\right\vert }{\theta\left(  n/q\right)  !}\delta^{\frac
{n}{q}-1}%
\]
as $\frac{n}{q}-1>0$ we have a uniform estimate in $\delta$ independent of the
choice of partition. Going through the same argument for the partition
$\widetilde{\mathcal{D}}$ and using the triangle inequality the Cauchy
property is established and the existence of the limit follows. The uniqueness
of the limit follows as in \cite{LCL}. The difference of two multiplicative
functionals that agree up to level $\lfloor q\rfloor$ is additive (see Lyons
\cite{L} Lemma 2.2.3) As the difference of the extensions is also a continuous
path and by assumption
\[
\left\Vert \overline{Q}_{s,t}^{\lfloor q\rfloor+1}-\widetilde{Q}%
_{s,t}^{\lfloor q\rfloor+1}\right\Vert \leq C\left(  \lfloor q\rfloor
+1\right)  \left\vert t-s\right\vert ^{\frac{\lfloor q\rfloor+1}{q}}%
\]
it follows that $\overline{Q}_{s,t}^{\lfloor q\rfloor+1}-\widetilde{Q}%
_{s,t}^{\lfloor q\rfloor+1}$ is identically zero. A simple induction now
completes the proof.
\end{proof}

\begin{lemma}
\label{main bound cor}For any $1/3<\gamma<1/2$ there exist a constant
$\theta>0$ and random variables $c(\gamma,\omega)$, almost surely finite, such
that
\begin{equation}
\left\Vert R_{s,t}^{n,\bar{\imath}}\right\Vert _{H^{1}\rightarrow H^{1}}%
\leq\frac{\left(  c\left(  \gamma,\omega\right)  \left\vert t-s\right\vert
\right)  ^{n\gamma}}{\theta\left(  n\gamma\right)  !}.
\label{final a priory estimates}%
\end{equation}
and
\begin{equation}
\left\Vert R_{s,t}^{n,\bar{\imath}}\right\Vert _{H^{-1}\rightarrow H^{-1}}%
\leq\frac{\left(  c\left(  \gamma,\omega\right)  \left\vert t-s\right\vert
\right)  ^{n\gamma}}{\theta\left(  n\gamma\right)  !}
\label{final a priory estimates 2}%
\end{equation}
for all $\bar{\imath}\in S\left(  n\right)  ,n\in\mathbb{N},$ $0<s<t\leq1$.
\end{lemma}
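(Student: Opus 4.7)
The plan is to apply the abstract extension lemma (Lemma \ref{holder control}) to the truncated polynomial $Q_{s,t}^{[\lfloor q \rfloor]}$ with $q = 1/\gamma \in (2,3)$, so that $\lfloor q \rfloor = 2$, viewing the coefficients in the Banach algebra $V = \mathcal{L}(H^1, H^1)$ (and separately $V = \mathcal{L}(H^{-1}, H^{-1})$) equipped with the operator norm and composition. The multiplicative property $Q_{s,t}^{[k]} = Q_{s,u}^{[k]} Q_{u,t}^{[k]}$ has already been established in \eqref{multiplicative property} via the simplex splitting \eqref{mult property prep}, so the hypotheses of Lemma \ref{holder control} reduce to verifying the base H\"older bound \eqref{holder bound} at levels $j = 1, 2$.

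For the base case I would invoke the a priori estimates \eqref{ap1} and \eqref{ap2} of Lemma \ref{a priori estimate}. These supply, for each fixed $j \in \{1,2\}$ and each $\bar{\imath} \in S(j)$, a bound of the form $\|R_{s,t}^{j,\bar{\imath}}\| \leq c(\gamma, j, \omega) |t-s|^{j\gamma}$, uniformly in $\bar{\imath}$. Since only finitely many levels are involved, I can enlarge the random constant to a single $C(\omega)$ chosen so that
\[
\|Q_{s,t}^j\| \leq \frac{\bigl(C(\omega) |t-s|\bigr)^{j/q}}{\theta (j/q)!}, \qquad j = 1, 2,
\]
for any $\theta > 0$ meeting the structural requirement of Lemma \ref{holder control}. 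Invoking that lemma then yields, for every $n > \lfloor q \rfloor$, a unique multiplicative extension $\widetilde{Q}_{s,t}^n$ with the desired factorial H\"older control $\|\widetilde{Q}_{s,t}^n\| \leq (C(\omega)|t-s|)^{n\gamma}/(\theta (n\gamma)!)$.

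The main step I expect to require care is identifying the abstract extension $\widetilde{Q}_{s,t}^n$ with the genuine $Q_{s,t}^n$. For this I would appeal to the uniqueness clause at the end of Lemma \ref{holder control}: any multiplicative extension satisfying a H\"older bound of order $|t-s|^{n/q} = |t-s|^{n\gamma}$ must coincide with the canonical one. The true $Q_{s,t}^n$ satisfies the multiplicative property by \eqref{multiplicative property}, and by Lemma \ref{a priori estimate} it also satisfies $\|Q_{s,t}^n\| \leq c(\gamma, n, \omega)|t-s|^{n\gamma}$, exactly the non-sharp H\"older regularity needed to fall within the uniqueness hypothesis. Hence $\widetilde{Q}_{s,t}^n = Q_{s,t}^n$ and bounds \eqref{final a priory estimates} and \eqref{final a priory estimates 2} follow by taking the appropriate component $R_{s,t}^{n,\bar{\imath}} = \pi_{\bar{\imath}}(Q_{s,t}^n)$. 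The two cases $H^1 \to H^1$ and $H^{-1} \to H^{-1}$ run in parallel, since both base bounds \eqref{ap1} and \eqref{ap2} are available and the heat semigroup and multiplication operators $H_i$ act boundedly on both spaces (via Lemma \ref{kusuoka-lemma}), so composition is genuinely submultiplicative in either Banach algebra.

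The only real subtlety I anticipate is algebraic: ensuring the truncated polynomial algebra $\mathcal{P}_{d_2, k}(V)$ with the sup norm on coefficients is compatible enough with Banach-algebra structure for the neo-classical inequality step in the proof of Lemma \ref{holder control} to apply. Each level-$n$ coefficient in a product involves only a finite sum over splittings $\bar{m} \ast \bar{l} = \bar{\imath}$ of length at most $n+1$, so the loss is at most a combinatorial factor independent of the dynamics, and can be harmlessly absorbed into $\theta$ or into $C(\omega)$ at the level of the base estimate.
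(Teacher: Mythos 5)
Your proposal is correct and follows essentially the same route as the paper: take $V$ to be the bounded operators on (the completion of) $H^1$ or $H^{-1}$, verify the base Hölder bound at levels $j=1,2$ from Lemma \ref{a priori estimate}, invoke Lemma \ref{holder control} with $q=1/\gamma\in(2,3)$ for the multiplicative extension, and then use the uniqueness clause together with the non-sharp Hölder bound of Lemma \ref{a priori estimate} to identify $\widetilde{Q}^n_{s,t}$ with $Q^n_{s,t}$. The one spot you flag as a possible subtlety is in fact harmless for a simpler reason than you state: when multiplying two \emph{homogeneous} pieces of degrees $i$ and $n-i$, each coordinate $\varepsilon_{\bar\imath}$ with $|\bar\imath|=n$ receives exactly one product $a_{\bar m}b_{\bar l}$ (the split at position $i$ is forced), so the sup-norm on coefficients is genuinely submultiplicative on such products and no combinatorial factor arises.
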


\begin{proof}
We now take for $V$ the space of bounded linear operators on (the completion
of) $H^{1}$ and $H^{-1}$ respectively. From the a priori estimates we know
that $Q_{s,t}^{\left[  n\right]  }\in$ $\mathcal{P}_{d_{2},n}\left(  V\right)
$ for all $n\geq1.$ First note that by Lemma \ref{a priori estimate} $\left(
Q_{s,t}^{1},Q_{s,t}^{2}\right)  $ satisfies the assumptions of Proposition
\ref{holder control} with $3>q=1/\gamma$ and therefore has a multiplicative
extension $\widetilde{Q}_{s,t}^{j}$ controlled in the sense of $\left(
\ref{holder bound}\right)  .$ Once again by Lemma \ref{a priori estimate} the
uniqueness part of Proposition \ref{holder control} applies and we deduce that
$Q_{s,t}^{j}=\widetilde{Q}_{s,t}^{j}$ for $j\in\mathbb{N}$.
\end{proof}

Armed with these two factorially decaying a priori estimates we are finally
ready proof a regularity estimate for $Q^{n}$ that decays factorially in $n.$
When considering $R_{s,t}^{n,\bar{\imath}},\bar{\imath}\in S\left(  n\right)
$ as an operator from $H^{-1}$ to $H^{1}$ we cannot directly apply Lemma
\ref{holder control} as the a priori bounds in Lemma \ref{a priori estimate}
have singularities for small $n$. Instead we exploit that there is more than
one way to estimate the operator norm of the composition of such operators.
Together with the estimates already obtained in Lemma \ref{main bound cor}
this will be sufficient to proof factorially decaying bounds for $n$
sufficiently large. We recall Proposition \ref{main factorial bound} and
restate it in the notation of the current section.

\textbf{Proposition \ref{main factorial bound}}: Let $1/3<\gamma<1/2$ be
fixed. \textit{There exists }$\theta>0,$\textit{\ }$\gamma^{\prime}\in\left(
1/3,\gamma\right)  $\textit{, }$m_{0}\in\mathbb{N}~\ $and\textit{\ random
variables }$c(\gamma^{\prime},\omega)$\textit{, almost surely finite, such
that }%
\begin{equation}
\left\Vert R_{s,t}^{n,\bar{\imath}}\right\Vert _{H^{-1}\rightarrow H^{1}}%
\leq\frac{\left(  c\left(  \gamma^{\prime},\omega\right)  \left\vert
t-s\right\vert \right)  ^{n\gamma^{\prime}}}{\theta\left(  n\gamma^{\prime
}\right)  !} \label{eq main factorial bound}%
\end{equation}
\textit{for all }$n\geq m_{0}.$

Before we begin the proof note that by choosing $\gamma^{\prime}<\gamma$ we
have for $n$ sufficiently large by Lemma \ref{a priori estimate}%
\[
\left\Vert R_{s,t}^{n,\bar{\imath}}\right\Vert _{H^{-1}\rightarrow H^{1}}\leq
c\left(  \gamma,n,\omega\right)  \left\vert t-s\right\vert ^{n\gamma-2\ell
}\leq c\left(  \gamma,n,\omega\right)  \left\vert t-s\right\vert
^{n\gamma^{\prime}}%
\]
for all \thinspace$0<s<t<1.$

\begin{proof}
[Proof of Proposition \ref{main factorial bound}]Choose $m_{0}$ and
$0<\gamma^{\prime}\leq\gamma$ such that $\gamma n-\ell\geq\gamma^{\prime}n$
for all $n\geq m_{0}.$ Using Corollary \ref{main bound cor} and Lemma
\ref{a priori estimate} (with $\gamma=\gamma^{\prime})$ we can find $c\left(
\gamma^{\prime},\omega\right)  $ such that simultaneously $\left(
\ref{eq main factorial bound}\right)  $ holds for all $n\in\left[
m_{0},2m_{0}\right]  $ and the two inequalities $\left(
\ref{final a priory estimates}\right)  $ and $\left(
\ref{final a priory estimates 2}\right)  $ hold for all $n\in\mathbb{N}$. Note
that this also serves as the base case for our induction argument. For this
lemma we set $V$ to be the space of bounded linear operators from $H^{-1}$ to
$H^{1}$.

We argue now exactly as in the proof Lemma \ref{holder control} to extend the
functional from level $n\geq2m_{0}$ to $n+1,$ with the only difference being
that we have no direct control over $\left\Vert R_{s,t}^{k,\bar{\imath}%
}\right\Vert _{H^{-1}\rightarrow H^{1}}$ for $\bar{\imath}\in S\left(
k\right)  ,$ $k<m_{0}.$ We therefore replace inequality $\left(
\ref{will be modified}\right)  $ with the following more refined estimate that
exploits that the operator norm of a composition of two operators can be
estimated in several ways, which allows us to draw on our a priori estimates
in Lemma \ref{a priori estimate}. We have
\begin{align}
&  \left\Vert \pi_{n+1}\left(  Q_{s,t}^{D,n+1}-Q_{s,t}^{D^{\prime}%
,n+1}\right)  \right\Vert \nonumber\\
&  \leq\sum_{i=1}^{n}\left\Vert Q_{t_{j-1},t_{j}}^{i}Q_{t_{j},t_{j+1}}%
^{n+1-i}\right\Vert \nonumber\\
&  =\sum_{i=1}^{n}\left\Vert \sum_{\bar{m}\in S\left(  i\right)  }\sum
_{\bar{l}\in S\left(  n+1-i\right)  }R_{t_{j-1},t_{j}}^{i,\bar{m}}%
R_{t_{j},t_{j+1}}^{n+1-i,\bar{l}}\varepsilon_{\bar{m}\ast\bar{l}}\right\Vert
\nonumber\\
&  =\sum_{i=1}^{n}\sup_{\bar{m}\in S\left(  i\right)  ,\bar{l}\in S\left(
n+1-i\right)  }\left\Vert R_{t_{j-1},t_{j}}^{i,\bar{m}}R_{t_{j},t_{j+1}%
}^{n+1-i,\bar{l}}\right\Vert _{H^{-1}\rightarrow H^{1}}\nonumber\\
&  \leq\sum_{i=1}^{m_{0}-1}\sup_{\bar{m}\in S\left(  i\right)  }\left\Vert
R_{t_{j-1},t_{j}}^{i,\bar{m}}\right\Vert _{H^{1}\rightarrow H^{1}}\sup
_{\bar{l}\in S\left(  n+1-i\right)  }\left\Vert R_{t_{j},t_{j+1}}%
^{n+1-i,\bar{l}}\right\Vert _{H^{-1}\rightarrow H^{1}}\nonumber\\
&  +\sum_{i=m_{0}}^{n}\sup_{\bar{m}\in S\left(  i\right)  }\left\Vert
R_{t_{j-1},t_{j}}^{i,\bar{m}}\right\Vert _{H^{-1}\rightarrow H^{1}}\sup
_{\bar{l}\in S\left(  n+1-i\right)  }\left\Vert R_{t_{j},t_{j+1}}%
^{n+1-i,\bar{l}}\right\Vert _{H^{-1}\rightarrow H^{-1}}\nonumber\\
&  \leq\sum_{j=1}^{n}\frac{\left(  C\left\vert t-s\right\vert \right)
^{\gamma^{\prime}j}}{\theta\left(  j\gamma^{\prime}\right)  !}\frac{\left(
C\left\vert t-s\right\vert \right)  ^{\gamma^{\prime}m+1-j}}{\theta\left(
\left(  m+1-j\right)  \gamma^{\prime}\right)  !} \label{replacement estimate}%
\end{align}
The bounds for $\left\Vert R_{t_{j-1},t_{j}}^{i,\bar{m}}\right\Vert
_{H^{1}\rightarrow H^{1}}$ and $\left\Vert R_{t_{j},t_{j+1}}^{n+1-i,\bar{l}%
}\right\Vert _{H^{-1}\rightarrow H^{-1}}$ use inequalities $\left(
\ref{final a priory estimates}\right)  $and $\left(
\ref{final a priory estimates 2}\right)  $ respectively. The bounds for
$\sup_{\bar{l}\in S\left(  n+1-i\right)  }\left\Vert R_{t_{j},t_{j+1}%
}^{n+1-i,\bar{l}}\right\Vert _{H^{-1}\rightarrow H^{1}}$ and $\ \sup_{\bar
{m}\in S\left(  i\right)  }\left\Vert R_{t_{j-1},t_{j}}^{i,\bar{m}}\right\Vert
_{H^{-1}\rightarrow H}$ follow (for the appropriate values of $i$) from the
inductive hypothesis$.$ With this modification in place arguing exactly as in
the proof of Lemma \ref{holder control} yields the result. Note that the
extension is only carried out for $n\geq2m_{0}.$ For $m_{0}\leq n<2m_{0}$ the
estimates use the a priori bounds.
\end{proof}

\begin{remark}
\label{final duality remark}To extend the proof of Proposition
\ref{main factorial bound} to cover the terms in the expansion of $\rho
_{t}^{\ast}$ we make the following modifications. In place of $R_{s,t}%
^{n,\bar{\imath}}$ we have
\[
X_{s,t}^{n,\bar{\imath}}=\int_{\Delta_{s,t}^{k}}P_{t-t_{n}}H_{i_{n}}%
P_{t_{n}-t_{n-1}}H_{i_{n-1}}\cdots H_{i_{1}}P_{t_{1}-s}dY_{t_{1}}^{i_{1}%
}\cdots dY_{t_{n}}^{i_{n}},
\]
i.e. the order of non-commutative product in the integrand is reversed. We
therefore define $\mathcal{\bar{P}}_{d_{2},k}\left(  V\right)  $ as
$\mathcal{P}_{d_{2},k}\left(  V\right)  $ but with the multiplication in
$\left(  \ref{mult def}\right)  $ replaced by%
\begin{equation}
ab:=\sum_{v=0}^{k}\sum_{j=0}^{v}\sum_{\bar{\imath}\in S\left(  j\right)  }%
\sum_{\bar{l}\in S\left(  v-j\right)  }b_{\bar{l}}a_{\bar{\imath}}%
\varepsilon_{\bar{\imath}\ast\bar{l}}. \label{mod mult}%
\end{equation}
With this modification $\left(  \ref{mult property prep}\right)  $ becomes%
\begin{align*}
X_{s,t}^{k,\bar{\imath}}  &  =\int_{\Delta_{s,u}^{k}}P_{t-t_{k}}H_{i_{k}%
}P_{t_{k}-t_{k-1}}H_{i_{k-1}}\cdots H_{i_{1}}P_{t_{1}-s}dY_{t_{1}}^{i_{1}%
}\cdots dY_{t_{k}}^{i_{k}}\\
&  +\int_{\Delta_{u,t}^{k}}P_{t-t_{k}}H_{i_{k}}P_{t_{k}-t_{k-1}}H_{i_{k-1}%
}\cdots H_{i_{1}}P_{t_{1}-s}dY_{t_{1}}^{i_{1}}\cdots dY_{t_{k}}^{i_{k}}\\
&  +\sum_{j=1}^{k-1}\int_{\Delta_{u,t}^{k-j}}P_{t-t_{k}}H_{i_{k}}%
P_{t_{k}-t_{k-1}}\cdots P_{t_{j+2}-t_{j+1}}H_{i_{j+1}}P_{t_{j+1}-u}%
dY_{t_{j+1}}^{i_{j+1}}\cdots dY_{t_{k}}^{i_{k}}\\
&  +\int_{\Delta_{s,u}^{k}}P_{u-t_{j}}H_{i_{j}}P_{t_{j}-t_{j-1}}\cdots
H_{i_{1}}P_{t_{1}-s}dY_{t_{1}}^{i_{1}}\cdots dY_{t_{j}}^{i_{j}}\\
&  =\sum_{j=0}^{k}X_{u,t}^{k-j,\left(  i_{j+1},\ldots i_{k}\right)  }%
X_{s,u}^{j,\left(  i_{1},\ldots i_{j}\right)  }\\
&  =\sum_{\bar{m}\ast\bar{l}=\bar{\imath}}X_{u,t}^{\left\vert \bar
{l}\right\vert ,\bar{l}}X_{s,u}^{\left\vert \bar{m}\right\vert ,\bar{m}}.
\end{align*}
Combining this identity with the modified multiplication (\ref{mod mult}) we
see that $\left(  \ref{multiplicative property}\right)  $ holds on
$\mathcal{\bar{P}}_{d_{2},k}\left(  V\right)  ,$ i.e. our functional $\bar
{Q}_{s,t}^{\left[  n\right]  }=\sum_{j=0}^{n}\sum_{\bar{\imath}\in S\left(
j\right)  }X_{s,t}^{j,\bar{\imath}}\varepsilon_{\bar{\imath}}$ has the
multiplicative property. Going through the same steps as before with these
modifications in place the proof of Proposition \ref{main factorial bound} may
now be completed.
\end{remark}

\section{Appendix:}

\subsection{\bigskip Proof of the inequality (\ref{massbound})}

As $Z_{t}^{x}\geq0$, we have, by Jensen's inequality, that $\mathbb{\tilde{E}%
}\left[  Z_{t}^{x}|Y_{\cdot}^{x}\right]  ^{-1}\leq\mathbb{\tilde{E}}\left[
(Z_{t}^{x})^{-1}|Y_{\cdot}^{x}\right]  $. Then observe that, by integration by
parts
\begin{align*}
-\sum_{i=1}^{d_{2}}\int_{0}^{t}h^{i}(X_{s}^{x})\,dY_{s}^{x,i}  &  =\sum
_{i=1}^{d_{2}}\left(  -h^{i}(X_{t}^{x})\,Y_{t}^{x,i}\,+\int_{0}^{t}Y_{s}%
^{x,i}Ah^{i}(X_{s}^{x})ds+\sum_{j=1}^{d}\int_{0}^{t}Y_{s}^{x,i}V^{j}%
h^{i}(X_{s}^{x})\,dB_{s}^{j}\right) \\
&  \leq\sum_{i=1}^{d_{2}}\sup_{s\in\left[  0,t\right]  }\left\vert Y_{s}%
^{x,i}\right\vert \left(  \left\vert \left\vert h^{i}\right\vert \right\vert
_{\infty}+t\left\vert \left\vert Ah^{i}\right\vert \right\vert _{\infty
}\right) \\
&  +\frac{d_{2}t}{2}\sum_{i=1}^{d_{2}}\sup_{s\in\left[  0,t\right]
}\left\vert Y_{s}^{x,i}\right\vert ^{2}\left(  \sum_{j=1}^{d}\left\vert
\left\vert V^{j}h^{i}\right\vert \right\vert ^{2}\right)  +\eta_{t}^{x},
\end{align*}
where
\[
\eta_{t}^{x}=\sum_{j=1}^{d}\int_{0}^{t}\left(  \sum_{i=1}^{d_{2}}Y_{s}%
^{x,i}V^{j}h^{i}(X_{s}^{x})\right)  \,dB_{s}^{j}-\sum_{j=1}^{d}\frac{1}{2}%
\int_{0}^{t}\left(  \sum_{i=1}^{d_{2}}Y_{s}^{x,i}V^{j}h^{i}(X_{s}%
^{x})\,\right)  ^{2}ds.
\]
Since, $\mathbb{\tilde{E}}\left[  \exp\left.  \eta_{t}^{x}\right\vert
\mathcal{Y}_{t}^{x}\right]  =1$,we get that%
\[
\left(  1/\rho_{t}^{x}\left(  1\right)  \right)  <\exp C\left(  \sum
_{i=1}^{d_{2}}\sup_{s\in\left[  0,t\right]  }\left\vert Y_{s}^{x,i}\right\vert
+\sup_{s\in\left[  0,t\right]  }\left\vert Y_{s}^{x,i}\right\vert ^{2}\right)
,
\]
where $C$ is a constant independent of $x$, $C=\max_{i=1,...,d}(\left\vert
\left\vert h^{i}\right\vert \right\vert _{\infty}+t\left\vert \left\vert
Ah^{i}\right\vert \right\vert _{\infty}+\frac{d_{2}t}{2}\sum_{j=1}%
^{d}\left\vert \left\vert V^{j}h^{i}\right\vert \right\vert ^{2}).$ Inequality
(\ref{massbound}) follows as $\sup_{x\in R^{N}}\sup_{s\in\left[  0,t\right]
}\left\vert Y_{s}^{x,i}\right\vert $ is finite almost surely.

\subsection{The expansion of the first three iterated integrals}

For the first integral we can express it using the following two terms:%
\begin{align*}
\int_{0}^{t_{2}}R_{\left(  t_{1},t_{2}\right)  }(\varphi)dY_{t_{1}}  &
=\int_{0}^{t_{2}}P_{t_{1}}\left(  hP_{t_{2}-t_{1}}(\varphi)\right)  dY_{t_{1}%
}\\
&  =q_{t_{2}}^{1}\left(  Y\right)  P_{t_{2}}\left(  h\varphi\right)  -\int%
_{0}^{t_{2}}q_{t_{1}}^{1}\left(  Y\right)  P_{t_{1}}\left(  \Psi_{1}%
P_{t_{2}-t_{1}}(\varphi)\right)  dt_{1}.
\end{align*}
For the second iterated integral we end up with the following five terms
(2+3)
\begin{align*}
\int_{0}^{t_{3}}\int_{0}^{t_{2}}R_{\left(  t_{1},t_{2},t_{3}\right)  }%
(\varphi)dY_{t_{1}}dY_{t_{2}}  &  =\int_{0}^{t_{3}}\int_{0}^{t_{2}}R_{\left(
t_{1},t_{2}\right)  }\left(  hP_{t_{3}-t_{2}}(\varphi)\right)  dY_{t_{1}%
}dY_{t_{2}}\\
&  =\int_{0}^{t_{3}}q_{t_{2}}^{1}\left(  Y\right)  P_{t_{2}}\left(
h^{2}P_{t_{3}-t_{2}}(\varphi)\right)  dY_{t_{2}}\\
&  -\int_{0}^{t_{3}}\int_{0}^{t_{2}}q_{t_{1}}^{1}\left(  Y\right)  P_{t_{1}%
}\left(  \Psi_{1}P_{t_{2}-t_{1}}(hP_{t_{3}-t_{2}}(\varphi))\right)
dt_{1}dY_{t_{2}}\\
&  =q_{t_{3}}^{2}\left(  Y\right)  P_{t_{3}}\left(  h^{2}\varphi\right)
-\int_{0}^{t_{3}}q_{t_{2}}^{2}\left(  Y\right)  P_{t_{2}}\left(  \Psi
_{2}P_{t_{3}-t_{2}}(\varphi)\right)  dt_{2}\\
&  -q_{t_{3}}^{1}\left(  Y\right)  \int_{0}^{t_{3}}q_{t_{2}}^{1}\left(
Y\right)  P_{t_{2}}\left(  \Psi_{1}P_{t_{3}-t_{2}}(h\varphi))\right)  dt_{2}\\
&  +\int_{0}^{t_{3}}q_{t_{2}}^{1}\left(  Y\right)  ^{2}P_{t_{2}}\left(
\Psi_{1}(hP_{t_{3}-t_{2}}(\varphi))\right)  dt_{2}\\
&  +\int_{0}^{t_{3}}q_{t_{2}}^{1}\left(  Y\right)  \int_{0}^{t_{2}}q_{t_{1}%
}^{1}\left(  Y\right)  P_{t_{1}}\left(  \Psi_{1}P_{t_{2}-t_{1}}(\Psi
_{1}P_{t_{3}-t_{2}}(\varphi))\right)  dt_{1}dt_{2}%
\end{align*}
For the third iterated integral we end up with the following 14 terms (2+3x4)%
\begin{align*}
\int_{0}^{t_{4}}\int_{0}^{t_{3}}\int_{0}^{t_{2}}R_{\left(  t_{1},t_{2}%
,t_{3}\right)  }(\varphi)dY_{t_{1}}dY_{t_{2}}dY_{t_{3}}  &  =\int_{0}^{t_{4}%
}q_{t_{3}}^{2}\left(  Y\right)  P_{t_{3}}\left(  h^{3}P_{t_{4}-t_{3}}%
(\varphi)\right)  dY_{t_{3}}\\
&  -\int_{0}^{t_{4}}\int_{0}^{t_{3}}q_{t_{2}}^{2}\left(  Y\right)  P_{t_{2}%
}\left(  \Psi_{2}P_{t_{3}-t_{2}}(hP_{t_{4}-t_{3}}(\varphi))\right)
dt_{2}dY_{t_{3}}\\
&  -\int_{0}^{t_{4}}q_{t_{3}}^{1}\left(  Y\right)  \int_{0}^{t_{3}}q_{t_{2}%
}^{1}\left(  Y\right)  P_{t_{2}}\left(  \Psi_{1}P_{t_{3}-t_{2}}(h^{2}%
P_{t_{4}-t_{3}}(\varphi)))\right)  dt_{2}dY_{t_{3}}\\
&  +\int_{0}^{t_{4}}\int_{0}^{t_{3}}q_{t_{2}}^{1}\left(  Y\right)
^{2}P_{t_{2}}\left(  \Psi_{1}(hP_{t_{3}-t_{2}}(hP_{t_{4}-t_{3}}(\varphi
)))\right)  dt_{2}dY_{t_{3}}\\
&  +\int_{0}^{t_{4}}\int_{0}^{t_{3}}q_{t_{2}}^{1}\left(  Y\right)  \int%
_{0}^{t_{2}}q_{t_{1}}^{1}\left(  Y\right)  P_{t_{1}}\left(  \Psi_{1}%
P_{t_{2}-t_{1}}(\Psi_{1}P_{t_{3}-t_{2}}(hP_{t_{4}-t_{3}}(\varphi)))\right)
dt_{1}dt_{2}dY_{t_{3}}\\
&  =q_{t_{4}}^{3}\left(  Y\right)  P_{t_{4}}\left(  h^{3}\varphi\right)
-\int_{0}^{t_{4}}q_{t_{3}}^{2}\left(  Y\right)  P_{t_{3}}\left(  \Psi
_{3}P_{t_{4}-t_{3}}(\varphi)\right)  dt_{3}\\
&  -q_{t_{4}}^{1}\left(  Y\right)  \int_{0}^{t_{4}}q_{t_{3}}^{2}\left(
Y\right)  P_{t_{3}}\left(  \Psi_{2}P_{t_{4}-t_{3}}(h\varphi)\right)  dt_{3}\\
&  +....
\end{align*}

\end{document}